\theoremstyle{plain}
\newtheorem{thm}{Теорема}[section]
\newtheorem{ass}[thm]{Утверждение}
\newtheorem{lem}[thm]{Лемма}
\newtheorem{cor}[thm]{Следствие}
\theoremstyle{definition}
\newtheorem{dfn}[thm]{Определение}
\newtheorem{rk}[thm]{Замечание}
\def\HP{\operatorname{HP}}
\def\Int{\operatorname{Int}}
\def\Cl{\operatorname{Cl}}
\def\Conv{\operatorname{Conv}}
\title
{
Устойчивость границы в проблеме Ферма--Штейнера в гиперпространствах над конечномерными нормированными пространствами\footnote{Работа выполнена при поддержке гранта РНФ (проект 21-11-00355) в МГУ имени М.~В.~Ломоносова, а также при поддержке гранта Фонда развития теоретической физики и математики ``БАЗИС'' (договор No.~21-8-3-3-1 от 01.10.2021).}}
{
Boundary stability in the Fermat--Steiner problem in hyperspaces over finite-dimensional normed spaces}
\author
{
Галстян А. Х. (г. Москва)}
{
Galstyan A. Kh. (Moscow)}
\keywords
{
метрическая геометрия, выпуклые множества, расстояние Хаусдорфа, непрерывные деформации, минимальные сети, проблема Ферма--Штейнера, проблема Штейнера.
}
{
metric geometry, convex sets, Hausdorff distance, continuous deformations, minimal networks, Fermat--Steiner problem, Steiner problem.
}
\begin{document}

\maketitle

\enmaketitle

\section{Введение}

Во многих областях деятельности человека возникают вопросы минимизации затрат каких бы то ни было ресурсов, например, времени в пути на доставку чего-либо: чем оно меньше, тем больше выгода. Проблема, которой посвящена данная работа, как раз из этой области. Одна из частных прикладных постановок задачи может быть такой: допустим, есть ряд предприятий и ставится вопрос о постройке ещё одного дополнительного, которое ежедневно будет тесно взаимодействовать с исходными. Вопрос --- где его разместить? Логично, например, в месте, которое находится на наименьшем суммарном расстоянии от исходных предприятий.

История подобного рода задач восходит ещё к 17 веку (подробную историческую сводку можно найти, например, в работах~\cite{Hist1, Hist2, Hist3}). А именно, в одной из своих работ Пьер Ферма сформулировал следующую проблему. Дано три точки на плоскости, требуется найти четвёртую такую, что сумма расстояний от неё до трёх изначальных минимальна. Около 1640 года Эванджелиста Торричелли, узнав об этой задаче от самого Ферма, решил её, опираясь на законы физики. Это решение было опубликовано в одной из работ Винченцо Вивиани, ученика Торричелли. До сих пор остаётся неясным вопрос о том, знал ли сам Ферма ответ на свою задачу. Традиционная историческая линия отдаёт пальму первенства в решении проблемы именно итальянскому учёному. Стоит отметить, что на протяжении веков задача Ферма забывалась, переоткрывалась и решалась заново разными европейскими математиками посредством новых методов и подходов (Т.~Симпсон, Ф.~Хейнен, Ж.~Бертран и др.). 

Ещё одна похожая проблема была сформулирована в 1836 году Карлом Гауссом. В письме своему ученику Г.~Х.~Шумахеру он задался вопросом, как построить сеть железных дорог минимальной суммарной длины, которая будет соединять четыре немецких города: Бремен, Гамбург, Ганновер и Брауншвейг. В Германии того времени весьма активно развивалось строительство железнодорожной сети по всей стране. Гаусс в своём ответе пишет, что задача Ферма здесь, а именно, поиск точки, реализующей минимум суммы расстояний до четырёх заданных, не приводит к графу минимального веса, соединяющему эти точки, в отличие от случая проблемы на трёх точках. Карл Гаусс в рамках той переписки впервые заявил о хорошо известном сегодня алгоритме построения кратчайшей сети на плоскости, соединяющей конечный набор точек: нужно перебрать все так называемые \textit{графы Штейнера}, соединяющие эти точки~\cite{Hist2, Hwang}. Минимальные сети будут среди них и только среди них. Графом Штейнера принято называть плоский граф, вершины в котором имеют степени смежности только три, два или один. И если вершина имеет степень три, то все рёбра из неё выходят под углом 120 градусов относительно друг друга, а если вершина имеет степень два, то угол между двумя исходящими рёбрами должен быть не меньше 120 градусов.

В 1934 году В. Ярник и М. Кёсслер сформулировали задачи Ферма и Гаусса в более общем виде: требуется соединить $n$ точек в евклидовой плоскости сетью минимального веса~\cite{YaKos}. Широкую известность эта проблема приобрела с выходом в 1941 году книги Р.~Куранта и Г.~Роббинса ``Что такое математика?''~\cite{Steiner}, которая впоследствии оказалась весьма популярной. Однако в этой работе авторы приписывают пальму первенства постановки и решения проблемы построения минимальной сети другому математику, Якобу Штейнеру, работавшему в 19 веке в стенах Берлинского университета. Штейнер формулировал задачу следующим образом: требуется соединить три деревни $A, B$ и $C$ системой дорог минимальной суммарной длины. Как пишут современные историки, всё дело в том, что Курант и Роббинс узнали про задачу Ферма из опубликованных черновиков берлинского математика. Таким образом, с 40-х годов прошлого столетия проблему поиска кратчайшей сети в метрическом пространстве, соединяющей заданные точки, принято называть \textit{проблемой Штейнера}. А родственную задачу, в которой поиск ведётся лишь среди графов типа звезда, где исходный набор точек является множеством листьев такого графа, с недавнего времени стали именовать \textit{проблемой Ферма--Штейнера}~\cite{ITT}.

Общая и естественная постановка задачи может быть такой. Пусть $Y$ --- метрическое пространство и $A$ --- конечное подмножество $Y$. Проблема Ферма--Штейнера состоит в поиске всех точек $y \in Y$, находящихся на наименьшем суммарном расстоянии от точек множества $A = \{A_1, \ldots, A_n\}.$ В соответствии с терминологией из теории графов множество $A$ называют \textit{границей}. Множество всех решений всюду далее будет обозначаться через $\Sigma(A).$ 

В настоящей работе рассматривается проблема Ферма--Штейнера в случае, когда $Y=\mathcal{H}(X)$ является пространством с метрикой Хаусдорфа $d_H,$ см. раздел~\ref{DistSec}, всех непустых компактных подмножеств конечномерного нормированного пространства $X$ над полем $\mathbb{R}$. Пространство $Y=\mathcal{H}(X)$ ещё называют \textit{гиперпространством} над $X,$ см.~\cite{Nadler}. Таким образом, задачу можно сформулировать так. Требуется найти все $K\in \mathcal{H}(X),$ которые реализуют минимум следующего функционала:
\begin{equation}\label{S_A}
S(A, K) = \sum\limits_{i=1}^n d_H(A_i, K).
\end{equation} 
Само минимальное значение $S(A, K)$ будет далее обозначаться через $S_A.$

Вообще говоря, в произвольном метрическом пространстве множество $\Sigma(A)$ может оказаться пустым. Однако из~\cite{ITT} известно, что в ограниченно компактных метрических пространствах, то есть в таких, где всякий замкнутый шар есть компакт, решение проблемы Ферма--Штейнера всегда существует. Более того, в~\cite{ITT} было доказано, что если $X$ ограниченно компактно, то и $\mathcal{H}(X)$ тоже ограниченно компактно. Хорошо известно, что в конечномерных пространствах замкнутое и ограниченное подмножество есть компакт. Таким образом, согласно сказанному выше, в данной работе всюду выполнено $\Sigma(A)\neq \emptyset.$ Элементы из $\Sigma(A)$ далее будут называться \textit{компактами Штейнера}.

Пусть $K\in \Sigma(A).$ Тогда обозначим расстояние по Хаусдорфу между $K$ и $A_i\in A$ через $d_i$ ($A_i$ из границы далее будут ещё называться \textit{граничными компактами}). Вектор $d = (d_1, \ldots, d_n)$ назовём \textit{вектором решения} проблемы. Множество всех таких векторов решений для границы $A$ обозначим через $\Omega(A).$ Отметим, что разные компакты Штейнера могут задавать один и тот же элемент из $\Omega(A)$. При этом очевидно, что по элементу из $\Sigma(A)$ его вектор $d$ восстанавливается однозначно. Таким образом, множество решений проблемы Ферма--Штейнера в $\mathcal{H}(X)$ разбивается на попарно непересекающиеся классы $\Sigma_d(A),$ каждый из которых соответствует своему вектору решения $d\in \Omega(A).$ Согласно работе~\cite{ITT} в ограниченно компактных пространствах каждый класс $\Sigma_d(A)$ содержит в себе по включению единственный \textit{максимальный компакт Штейнера}, он обозначается через $K_d,$ и, вообще говоря, множество \textit{минимальных компактов Штейнера}. В~\cite{ITT} также было доказано для случая ограниченно компактных пространств, что если $d\in \Omega(A),$ то $K_d = \bigcap\limits_{i=1}^n B_{d_i}(A_i),$ где $B_{d_i}(A_i)$ --- шар (или ещё говорят замкнутая окрестность) с центром в компакте $A_i,$ см. определение в разделе~\ref{Balls}. Более того, $K\in \Sigma_d(A)$ тогда и только тогда, когда с некоторым минимальным компактом Штейнера $K_{\lambda}\in \Sigma_d(A)$ справедливо $K_{\lambda}\subset K\subset K_d.$

Геометрии пространств $\mathcal{H}(X)$ и, в частности, проблема Ферма--Штейнера в $\mathcal{H}(X)$ имеют потенциальные применения в таких прикладных областях математики, как распознавание и сравнение образов, реализация непрерывных деформаций одних геометрических объектов в другие и т.~д. Поэтому в последнее время стала активно развиваться наука по работе в различных гиперпространствах (см., например,~\cite{GeomHyp}, в которой рассматриваются кратчайшие кривые в $\mathcal{H}(X),$ или статьи~\cite{ShComp, d_GH, Is_d_GH} по исследованию и приложению более общего расстояния Громова--Хаусдорфа). 

Настоящая работа продолжает исследования, начатые в~\cite{M}, где рассматривались только границы в $\mathcal{H}(X),$ все элементы которых есть конечные подмножества $\mathbb{R}^m.$ В данной статье делается шаг в сторону перехода от исследования конечных граничных компактов к выпуклым компактным подмножествам в рамках проблемы Ферма--Штейнера. В частности, изучается возможность обобщения такого понятия, как \textit{множество сцепки}, введённого в работе~\cite{M}, см. раздел~\ref{Points}. Также исследуются вопросы того, что можно сказать при переходе от границы из конечных компактов к границе, состоящей из их выпуклых оболочек. В данной работе границы, для которых минимум функционала~(\ref{S_A}) при переходе к выпуклым оболочкам граничных компактов останется прежним, будут называться \textit{устойчивыми}, иначе --- \textit{неустойчивыми}.

В секции~\ref{Definitions} приводятся все нужные определения и вспомогательные утверждения, которые используются в статье. Раздел~\ref{Convex} разрабатывает необходимую теорию, связанную с оператором взятия выпуклой оболочки непустого компактного подмножества, в рамках проблемы Ферма--Штейнера. В частности, важным утверждением здесь является следствие~\ref{dH}. Оно используется в разделе~\ref{Saving}, который посвящён вопросам устойчивости границ. В разделе~\ref{Corresp} происходит обобщение понятия множества сцепки, которое было сделано в~\cite{M} для границ из конечных компактов. Результаты этой секции также нашли своё применение в разделе~\ref{Saving}. Секция~\ref{Example} демонстрирует приложение результатов раздела~\ref{Saving} на примере конкретной задачи из~\cite{ITT, M}.

Основными результатами статьи являются утверждение~\ref{Ass_eq}, утверждение~\ref{eq_d} и теорема~\ref{HP}, обобщающие понятие множества сцепки, теорема~\ref{pHP} о взаимосвязи границы из выпуклых компактов с $K_d$, утверждение~\ref{New_K_d}, опирающееся на следствие~\ref{dH} и дающее ответ на вопрос, что произойдёт в векторами решений из $\Omega(A)$ и каким будет максимальный компакт Штейнера в случае устойчивой границы, и теорема~\ref{DStay}, в которой приводится достаточное условие для того, чтобы граница оказалась неустойчивой.

Отметим, что утверждение~\ref{New_K_d} не является новым, его можно найти в статье~\cite{Tropin}, однако в настоящей работе приводится альтернативное доказательство этого факта.

Автор выражает благодарность своему научному руководителю, профессору А.~А.~Тужилину, и профессору А.~О.~Иванову за постановку задачи и постоянное внимание к ней в процессе совместной работы.

\section{Необходимые определения и утверждения}\label{Definitions}

В рамках данного раздела $(X, \rho)$ --- произвольное метрическое пространство, если не оговорено противное. Для удобства расстояние между двумя точками $a, b\in X$ будем обозначать через $|ab|$ вместо $\rho(a,b),$ а также вместо $(X, \rho)$ будем писать просто $X.$

Во многих местах в тексте будут использоваться следующие общепринятые обозначения:
$$[a, b) = \bigl\{(1-\lambda) a +  \lambda b \text{ } \bigl | \text{ } \lambda \in [0,1)\bigr\},$$ $$(a, b] = \bigl\{(1-\lambda) a +  \lambda b \text{ } \bigl | \text{ } \lambda \in (0,1]\bigr\},$$ $$[a, b] = \bigl\{(1-\lambda) a +  \lambda b \text{ } \bigl | \text{ } \lambda \in [0,1]\bigr\},$$ для любых двух точек $a$ и $b$ линейного метрического пространства $X$ над полем $\mathbb{R}.$

\subsection{О шарах в метрических пространствах}\label{Balls}

В определениях~\ref{one} и~\ref{two} множества $A\subset X$ и $B\subset X$ непусты.

\begin{dfn}\label{one}
\textit{Расстоянием от точки} $p\in X$ \textit{до множества} $A\subset X$ называется величина
$$|p A| = \inf\{|pa|\,:\,a\in A\}.$$
\end{dfn}

\begin{dfn}\label{two}
Множества  
$$B_r(A) = \{p\,:\,|pA|\le r\}; \,\,\, U_r(A) = \{p\,:\,|pA| < r\}$$
называются \textit{замкнутым} и \textit{открытым}, соответственно, \textit{шаром с центром в} $A$ \textit{радиуса} $r.$
\end{dfn}

В случае $A = \{a\},$ где $a\in X,$ то для краткости $B_r\bigl(\{a\}\bigr)$ и $U_r\bigl(\{a\}\bigr)$ будут заменяться на $B_r(a)$ и $U_r(a),$ соответственно.

\begin{rk}\label{rk:2}
Мы полагаем, что расстояние от точки до пустого множества равно бесконечности. В связи с этим для любого $0\le r<\infty$ верно $B_r(\emptyset) =\emptyset.$
\end{rk}

\begin{lem}\label{lm_2}
Пусть $A\subset X$ --- непустой компакт и $r>0.$ Тогда $B_r(A) = \bigcup\limits_{a\in A} B_r(a).$
\end{lem}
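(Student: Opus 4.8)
План состоит в том, чтобы доказать требуемое равенство как два включения, причём одно из них тривиально, а второе использует компактность.

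Сначала я бы установил включение $\bigcup_{a\in A} B_r(a)\subset B_r(A)$; оно не опирается на компактность. Если $p\in B_r(a)$ при некотором $a\in A$, то $|pA|=\inf_{a'\in A}|pa'|\le |pa|\le r$, то есть $p\in B_r(A)$, что и требовалось.

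Затем я бы перешёл к обратному включению $B_r(A)\subset \bigcup_{a\in A} B_r(a)$, где как раз и понадобится компактность $A$. Пусть $p\in B_r(A)$, то есть $|pA|=\inf_{a\in A}|pa|\le r$. Отображение $a\mapsto |pa|$ непрерывно на $A$ (в силу неравенства треугольника оно даже $1$-липшицево), а $A$ --- непустой компакт, поэтому инфимум достигается: найдётся $a_0\in A$, для которого $|pa_0|=|pA|\le r$. Тогда $p\in B_r(a_0)\subset \bigcup_{a\in A} B_r(a)$, и обратное включение доказано.

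Главным (и по существу единственным содержательным) шагом здесь будет ссылка на классический факт о достижимости инфимума непрерывной функции на компакте; всё остальное сводится к непосредственной проверке определений, так что серьёзных трудностей не ожидается.
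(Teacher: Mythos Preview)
Your proof is correct and follows essentially the same route as the paper: the trivial inclusion is handled by the definition of $|pA|$, and the reverse inclusion uses compactness to guarantee that the infimum $|pA|$ is attained at some point of $A$. The only cosmetic difference is that the paper phrases the attainment step as ``$P_A(p)\neq\emptyset$'' via the metric projection, whereas you spell it out as the extreme value theorem for the $1$-Lipschitz function $a\mapsto|pa|$.
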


\begin{proof}

По определению $B_r(A) = \{x\in X \, :\, |x\, A| \le r\}.$ Покажем сначала $\{x\in X \, :\, |x\, A| \le r\}\subset \bigcup\limits_{a\in A} B_r(a).$ Пусть $p\in \{x\in X \, :\, |x\, A| \le r\}.$ Так как $A$ --- компакт, то $P_{A}(p)\neq \emptyset.$ Пусть $q\in P_{A}(p).$ Тогда $||p-q|| = |p\,A| \le r.$ Значит, $p\in B_r(q)\subset \bigcup\limits_{a\in A} B_r(a).$ В силу произвольности точки $p$ получаем $\{x\in X \, :\, |x\, A| \le r\}\subset \bigcup\limits_{a\in A} B_r(a).$

Покажем теперь $\bigcup\limits_{a\in A} B_r(a)\subset \{x\in X \, :\, |x\, A| \le r\}.$ Но для любой $p\in \bigcup\limits_{a\in A} B_r(a)$ существует точка $q\in A$ такая, что $p\in B_r(q).$ Значит, так как $q\in A,$ то $|p\, A|\le r.$ Следовательно, $p\in \{x\in X \, :\, |x\, A| \le r\}.$ Аналогично, в силу произвольности точки $p$ имеем $\bigcup\limits_{a\in A} B_r(a)\subset \{x\in X \, :\, |x\, A| \le r\}.$ 

Таким образом, $B_r(A) =\{x\in X \, :\, |x\, A| \le r\} = \bigcup\limits_{a\in A} B_r(a).$

\end{proof}

\begin{rk}\label{rk:0}
В доказательстве $\bigcup\limits_{a\in A} B_r(a)\subset \{x\in X \, :\, |x\, A| \le r\} = B_r(A)$ из леммы~\ref{lm_2} нигде не пользовалась компактность. Поэтому включение $\bigcup\limits_{a\in A} B_r(a)\subset B_r(A)$ верно для любого $A\subset X.$
\end{rk}

Следующий факт можно найти, например, в работе~\cite{M}.

\begin{lem}[\cite{M}]\label{TL}
Пусть $a_1, a_2, \ldots, a_n$ --- точки нормированного пространства со строго выпуклой нормой. Тогда если множество $$C = B_{r_1}(a_1)\cap \ldots \cap B_{r_n}(a_n)$$ состоит более чем из одной точки, то оно имеет непустую внутренность.
\end{lem}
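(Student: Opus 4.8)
The plan is to exhibit an explicit nonempty open subset of $C$, namely a small neighbourhood of the midpoint of any two distinct points of $C$. So suppose $C$ contains two distinct points $x$ and $y$. Since $C\subseteq B_{r_i}(a_i)$ for every $i$, each of these balls contains more than one point, which forces $r_i>0$; this is worth recording at the outset because the strict-convexity step below normalizes by $r_i$. I would then set $z=\tfrac12(x+y)$ and claim that $z\in U_{r_i}(a_i)$ for every $i$, i.e. $\|z-a_i\|<r_i$.

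To prove this claim, fix $i$ and split into two cases. If $\|x-a_i\|<r_i$ or $\|y-a_i\|<r_i$, then the ordinary triangle inequality already gives $\|z-a_i\|\le\tfrac12\|x-a_i\|+\tfrac12\|y-a_i\|<r_i$. Otherwise $\|x-a_i\|=\|y-a_i\|=r_i$, and here I would invoke strict convexity of the norm: putting $u=(x-a_i)/r_i$ and $v=(y-a_i)/r_i$ we have $\|u\|=\|v\|=1$ and $u\ne v$ (because $x\ne y$ and $r_i\ne0$), so strict convexity yields $\|\tfrac12(u+v)\|<1$, and multiplying by $r_i$ gives exactly $\|z-a_i\|<r_i$.

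Having established that $z\in\bigcap_{i=1}^{n}U_{r_i}(a_i)$, I would conclude as follows: this intersection is open (a finite intersection of open balls in a normed space) and is contained in $\bigcap_{i=1}^{n}B_{r_i}(a_i)=C$; since it contains $z$ it is nonempty. Hence $C$ has nonempty interior.

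The argument is short, and the single delicate point — the ``main obstacle'' — is precisely the boundary case where $x$ and $y$ both lie on the sphere $\{\,p:\|p-a_i\|=r_i\,\}$: there the plain triangle inequality only delivers $\|z-a_i\|\le r_i$, and one genuinely needs the ``no line segment on the unit sphere'' property of a strictly convex norm to upgrade this to a strict inequality. (Convexity alone is not enough for the conclusion: it gives nonempty interior only relative to the affine hull of $C$, whereas here we want nonempty interior in $X$, which is exactly where strict convexity enters.)
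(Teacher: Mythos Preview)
Your argument is correct. The midpoint construction together with the two-case analysis (at least one of $x,y$ strictly inside the $i$-th ball, versus both on its sphere) is exactly the standard way to exploit strict convexity here, and your remark that the boundary case is where strict convexity is genuinely needed is on point.

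As for comparison: the paper does not prove this lemma at all --- it is stated with a citation to~\cite{M} and no argument is given in the present text. So there is nothing to compare against; your proof simply supplies what the paper omits. The approach you chose is the natural and expected one for this statement.
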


Согласно~\cite{Mend} справедливо следующее утверждение, которое нам понадобится далее.

\begin{ass}[\cite{Mend}]\label{Mendelson}
Для непустого $A\subset X$ и функция $$f\colon X \rightarrow \mathbb{R},$$ заданая правилом $$f\colon x \mapsto |xA|,$$ непрерывна.
\end{ass}

\begin{lem}\label{bound}
Пусть $A$ --- непустое подмножество нормированного пространства $X$ и $\varepsilon\ge 0.$ Тогда $|p A| = \varepsilon$ для любой точки $p\in \partial B_{\varepsilon}(A).$
\end{lem}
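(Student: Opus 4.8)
The plan is to rely entirely on the continuity of the distance-to-$A$ function, which is Assertion~\ref{Mendelson}. Write $f\colon X\to\mathbb{R}$, $f(x)=|xA|$, so that by the definitions in Section~\ref{Balls} one has $B_{\varepsilon}(A)=f^{-1}\bigl((-\infty,\varepsilon]\bigr)$ and $U_{\varepsilon}(A)=f^{-1}\bigl((-\infty,\varepsilon)\bigr)$. Since $f$ is continuous, the first of these sets is closed and the second is open. Note also the elementary inclusion $U_{\varepsilon}(A)\subset B_{\varepsilon}(A)$, which is immediate because a point at distance strictly less than $\varepsilon$ from $A$ is in particular at distance at most $\varepsilon$ from $A$.

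First I would show that every boundary point $p\in\partial B_{\varepsilon}(A)$ satisfies $|pA|\le\varepsilon$. Indeed, $\partial B_{\varepsilon}(A)\subset\Cl\bigl(B_{\varepsilon}(A)\bigr)$, and $B_{\varepsilon}(A)$ is closed, so $\Cl\bigl(B_{\varepsilon}(A)\bigr)=B_{\varepsilon}(A)$; hence $p\in B_{\varepsilon}(A)$, which is exactly the statement $|pA|\le\varepsilon$.

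Next I would rule out the strict inequality. Suppose $|pA|<\varepsilon$ for some $p\in\partial B_{\varepsilon}(A)$; in particular $\varepsilon>0$. Then $p\in U_{\varepsilon}(A)$, and $U_{\varepsilon}(A)$ is an open set contained in $B_{\varepsilon}(A)$. Therefore $p$ is an interior point of $B_{\varepsilon}(A)$, contradicting $p\in\partial B_{\varepsilon}(A)$. Combining the two steps yields $|pA|=\varepsilon$, as required.

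There is essentially no serious obstacle here: once Assertion~\ref{Mendelson} is invoked, the argument is just the standard fact that the topological boundary of a sublevel set of a continuous real-valued function is contained in the corresponding level set, together with the trivial inclusion $U_{\varepsilon}(A)\subset B_{\varepsilon}(A)$. The only point worth a moment's care is the degenerate case $\varepsilon=0$, in which $|pA|\ge 0=\varepsilon$ together with $|pA|\le\varepsilon$ already forces $|pA|=\varepsilon$, so the second step is not even needed.
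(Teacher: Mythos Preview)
Your argument is correct and follows essentially the same approach as the paper: both proofs rest on the continuity of $f(x)=|xA|$ from Assertion~\ref{Mendelson} to squeeze $|pA|$ between $\varepsilon$ and $\varepsilon$. The paper phrases this via sequences $\{x_n\}\subset B_{\varepsilon}(A)$ and $\{y_n\}\subset X\setminus B_{\varepsilon}(A)$ converging to $p$, while you package the same content as ``closed preimage is closed, open preimage is open'' --- a purely cosmetic difference.
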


\begin{proof}

Рассмотрим последовательности точек $\{x_n\}\subset B_{\varepsilon}(A)$ и $\{y_n\}\subset X\setminus B_{\varepsilon}(A),$ сходящиеся к $p.$ Заметим, что $|x_i A| \le \varepsilon$ и $|y_j A| > \varepsilon$ для всех $i,j.$ В силу утверждения~\ref{Mendelson}, а также ввиду $||p - x_i|| \rightarrow 0$ при $i \rightarrow \infty$ и $||p - y_j|| \rightarrow 0$ при $j \rightarrow \infty$ получаем $|p A| \le \varepsilon$ и $|p A|\ge \varepsilon.$ Значит, $|p A| = \varepsilon.$  

\end{proof}

Далее потребуется следующее определение.

\begin{dfn}
\textit{Суммой Минковского} двух подмножеств $A$ и $B$ линейного пространства называется множество 
$$A + B = \{a + b\,:\,a\in A, b\in B\,\}.$$ Также по определению полагаем $$\lambda A=\{\lambda a\,:\, a\in A\},$$ где $\lambda\in \mathbb{R}.$
\end{dfn}

\begin{lem}\label{sum}
Пусть $A$ --- непустое замкнутое подмножество пространства $X$ и $r, r' \ge 0.$ Тогда $B_r\bigl(B_{r'}(A)\bigr) = B_{r+r'}(A).$
\end{lem}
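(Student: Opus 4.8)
The plan is to reduce the lemma to the pointwise identity
\[
|p\,B_{r'}(A)| \;=\; \max\bigl\{\,|pA| - r',\;0\,\bigr\}\qquad\text{for every }p\in X,
\]
since then
\[
B_r\bigl(B_{r'}(A)\bigr) \;=\; \bigl\{p : |p\,B_{r'}(A)|\le r\bigr\} \;=\; \bigl\{p : |pA|\le r+r'\bigr\} \;=\; B_{r+r'}(A).
\]
Here $B_{r'}(A)$ is a nonempty set, because $r'\ge 0$ forces $A\subseteq B_{r'}(A)$, so $|p\,B_{r'}(A)|$ is the infimum of a nonempty set of nonnegative numbers.

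For the inequality $|p\,B_{r'}(A)|\ge |pA|-r'$ (which, together with $|p\,B_{r'}(A)|\ge 0$, yields ``$\ge$'' in the identity and hence $B_r\bigl(B_{r'}(A)\bigr)\subseteq B_{r+r'}(A)$) I would use only the triangle inequality: for any $q\in B_{r'}(A)$ and any $a\in A$ one has $|pa|\le|pq|+|qa|$; taking the infimum over $a\in A$ gives $|pA|\le|pq|+|qA|\le|pq|+r'$, and then taking the infimum over $q\in B_{r'}(A)$ gives $|pA|\le|p\,B_{r'}(A)|+r'$. This half is routine.

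For the reverse inequality $|p\,B_{r'}(A)|\le\max\{|pA|-r',0\}$ (which gives the inclusion $B_{r+r'}(A)\subseteq B_r(B_{r'}(A))$) I would argue geometrically. If $|pA|\le r'$ then $p\in B_{r'}(A)$ and the left-hand side is $0$. Otherwise $|pA|>r'$; fix $\varepsilon>0$, pick $a\in A$ with $|pa|<|pA|+\varepsilon$ (so in particular $|pa|\ge|pA|>r'$), and let $q = a + \tfrac{r'}{|pa|}(p-a)$ be the point of the segment $[a,p]$ lying at distance $r'$ from $a$; since $X$ is a normed space, $|qa| = r'$ and $|pq| = |pa|-r'$. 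Then $|qA|\le|qa| = r'$, so $q\in B_{r'}(A)$, while $|pq| = |pa|-r' < |pA|-r'+\varepsilon$, whence $|p\,B_{r'}(A)| < |pA|-r'+\varepsilon$; letting $\varepsilon\to 0$ completes this half. The only point requiring a little care is that the distance $|pA|$ need not be attained at a point of $A$ (the hypothesis merely asks that $A$ be closed, and $X$ may be infinite-dimensional), which is why $a$ is chosen only $\varepsilon$-close to optimal and the bound is obtained by a passage to the limit rather than by a single nearest-point construction; apart from this, the proof uses nothing beyond Definition~\ref{two}, the homogeneity of the norm along $[a,p]$, and elementary manipulations with infima. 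Note that this geometric step does genuinely rely on $X$ being normed, whereas the closedness of $A$ is not actually needed.
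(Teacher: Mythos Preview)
Your argument is correct. It proceeds by establishing the pointwise distance formula $|p\,B_{r'}(A)|=\max\{|pA|-r',0\}$ via the triangle inequality in one direction and a segment construction in the other, then reads off the set equality. The paper, by contrast, disposes of the lemma in a single line using the Minkowski-sum representation: from $B_{\rho}(A)=A+B_{\rho}(0)$ for closed $A$ one gets
\[
B_r\bigl(B_{r'}(A)\bigr)=A+B_{r'}(0)+B_r(0)=A+B_{r+r'}(0)=B_{r+r'}(A),
\]
where the middle step is just $B_r(0)+B_{r'}(0)=B_{r+r'}(0)$ in a normed space. The paper's route is shorter but silently relies on the identity $B_{\rho}(A)=A+B_{\rho}(0)$, which in turn needs that the infimum $|xA|$ is attained (true in the finite-dimensional setting of the paper, but not for arbitrary closed $A$ in an infinite-dimensional normed space). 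Your approach is longer but more robust: it never uses that a nearest point in $A$ exists, works verbatim in any normed space, and, as you observe, does not require $A$ to be closed at all.
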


\begin{proof}

В силу замкнутости множества $A$ имеем $$B_r\bigl(B_{r'}(A)\bigr) = A + B_r(0) + B_{r'}(0) = A + B_{r+r'}(0) = B_{r+r'}(A).$$

\end{proof}

\begin{ass}
Пусть $A$ --- непустое выпуклое замкнутое подмножество нормированного пространства $X.$ Тогда $B_r(A)$ выпукло для любого $r\ge 0$.
\end{ass}

\begin{proof}

Из~\cite{Int_1} известно, что сумма Минковского двух выпуклых множеств выпукла. В силу замкнутости $A$ верно $B_r(0)+A=B_r(A)$. Но $B_r(0)$ и $A$ выпуклы. Поэтому $B_r(A)$ тоже выпукло.

\end{proof}

\subsection{Метрические проекции}

\begin{dfn} \label{dfn_3}
Пусть $M$ --- непустое подмножество $X.$ Множество всех подмножеств $X$ обозначим через $2^X.$ Отображение $P_M\colon X \rightarrow 2^X$, заданное по правилу $$P_M\colon x \mapsto \{z\in M : |xz| = |xM|\},$$ называется \textit{метрической проекцией} $X$ на $M.$
\end{dfn}

\begin{rk}
Если $M\subset X$ замкнуто и непусто, то для любой точки $x\in X$ множество $P_M(x)$ непусто. 
\end{rk}

В случае конечномерного нормированного пространства $X$ верен следующий факт (см., например,~\cite{C}).

\begin{ass}[\cite{C}] \label{Proj}
Пусть $M\subset X$ --- непустой выпуклый компакт. Тогда для любой точки $x\in X\setminus M$, для любой точки $y\in P_M(x)$ и для всех $\lambda \ge 0$ выполняется
$$y\in P_M\bigl((1-\lambda)y + \lambda x\bigr).$$
\end{ass}

\subsection{О расстояниях между подмножествами метрического пространства}\label{DistSec}

В определениях~\ref{distance} и~\ref{distance_H} множества $A\subset X$ и $B\subset X$ непусты.

\begin{dfn}\label{distance}
\textit{Расстоянием между} $A$ и $B$ называется величина $$|A\,B| = \inf\limits_{a\in A} |a B|.$$
\end{dfn}

\begin{dfn}\label{distance_H}
\textit{Расстоянием Хаусдорфа} между $A$ и $B$ называется величина
$$d_H(A, B) = \inf\bigl\{r\,:\, A\subset B_r(B), B\subset B_r(A)\bigr\}.$$
\end{dfn}

Геометрия расстояния Хаусдорфа довольно подробно описана, например, в работе~\cite{Schlicker}.

Обозначим множество непустых замкнутых и ограниченных подмножеств пространства $X$ через $\mathcal{H}(X).$ Известно (см., например,~\cite{Bur, IT}), что в отличие от $|A\,B|$ расстояние Хаусдорфа $d_H(A, B)$ задаёт метрику на $\mathcal{H}(X)$ . 

\begin{dfn}
Метрическое пространство $\bigl(\mathcal{H}(X), d_H\bigr)$ называется \textit{гиперпространством} над пространством $X.$
\end{dfn}

В леммах~\ref{intersection},~\ref{incl} и~\ref{aux} $X$ --- конечномерное нормированное пространство. Отметим, что лемма~\ref{intersection} используется только для доказательства леммы~\ref{incl}. 

\begin{lem}\label{intersection}
Пусть $M\in \mathcal{H}(X),$ $x\in M$ и $l$ --- луч с началом в точке $x.$ Тогда $l\cap \partial M\neq \emptyset.$
\end{lem}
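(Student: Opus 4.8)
The plan is to reduce the statement to a one‑dimensional fact about the parameters along the ray that land inside $M$. Write the ray in the form $l = \{x + tv : t \ge 0\}$ with $v$ a fixed nonzero direction vector, and consider the parameter set
$$T = \{t \ge 0 : x + tv \in M\}.$$
First I would verify that $T$ is a nonempty compact subset of $\mathbb{R}$: it contains $0$ because $x \in M$; it is closed because $t \mapsto x + tv$ is continuous and $M$ is closed; and it is bounded because $M$ is bounded, so $\|x + tv\|$ stays bounded on $T$, which forces $t\|v\|$, hence $t$, to be bounded. Therefore $t^{*} := \max T$ exists, and the point $p := x + t^{*} v$ lies in $M \cap l$.

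Next I would show that $p \in \partial M$. Since $t^{*}$ is the \emph{maximum} of $T$, for every $\varepsilon > 0$ we have $t^{*} + \varepsilon \notin T$, that is, $x + (t^{*} + \varepsilon)v \notin M$. Letting $\varepsilon \to 0^{+}$ produces a sequence of points of $X \setminus M$ converging to $p$, so $p \notin \Int(M)$. On the other hand $M$ is closed, so $p \in M = \Cl M$, and hence $p \in \Cl M \setminus \Int(M) = \partial M$. Combined with $p \in l$, this gives $l \cap \partial M \neq \emptyset$, as required.

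I do not expect a genuine obstacle here; the only point requiring care is to make sure $M$ is truly compact — closed and bounded in the finite‑dimensional space $X$ — so that the supremum defining $t^{*}$ is attained. Boundedness of $M$ is precisely what prevents the ray from remaining inside $M$ indefinitely, and it is the sole place where the finite dimensionality of $X$ (or even just the hypothesis $M \in \mathcal{H}(X)$) is used.
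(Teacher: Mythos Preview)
Your proof is correct and follows essentially the same approach as the paper: the paper takes $l' = l \cap M$, notes it is compact, maximizes the continuous function $a \mapsto \|a - x\|$ on $l'$ to obtain the point $p$, and then observes that every neighbourhood of $p$ meets $X \setminus M$ because points further along the ray lie outside $M$. Your parametrisation by $t$ and the set $T = \{t \ge 0 : x + tv \in M\}$ is simply the scalar version of this same argument, with $\max T$ playing the role of the maximiser of $\|a - x\|$.
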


\begin{proof}

Обозначим $l\cap M$ через $l'.$ Заметим, что $l'$ замкнуто как пересечение двух замкнутых множеств. Более того, $l'$ --- компакт как замкнутое подмножество компакта. Рассмотрим функцию $f\colon l' \rightarrow \mathbb{R},$ заданную правилом $a \mapsto ||a - x||.$ Функция $f$ непрерывна как функция расстояния, и на $l'$ она достигает своей точной верхней грани, так как $l'$ компакт. Пусть $p\in l'\subset M$ такая, что 
\begin{equation}\label{b}
\max\limits_{a\,\in\, l'} f(a) = f(p).
\end{equation}
Но в силу~(\ref{b}) и ввиду линейности пространства $X$ любая окрестность точки $p$ содержит точки, не лежащие в $M.$ Значит, по определению $p\in \partial M.$ Следовательно, $l\cap \partial M\neq \emptyset.$

\end{proof}

\begin{lem}\label{incl}
Пусть $A, C\in \mathcal{H}(X),$ $A$ --- выпукло, $r>0,$ $C\subset B_r(A)$ и $\bigl|C\, \partial B_r(A)\bigr| = \gamma > 0.$ Тогда для любого $0\le\delta\le\min\{r, \gamma\}$ верно $C\subset B_{r - \delta}(A).$
\end{lem}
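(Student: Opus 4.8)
Идея доказательства — рассуждение от противного. Предположим, что найдётся точка $c \in C$ с $|cA| > r - \delta$. Поскольку $\delta \le \min\{r,\gamma\}\le r$, имеем $r - \delta \ge 0$, откуда $|cA| > 0$ и, значит, $c \notin A$; при этом из $c \in C \subset B_r(A)$ следует $t := |cA| \le r$, то есть $t \in (r - \delta, r]$. Так как $A$ — непустой выпуклый компакт, метрическая проекция $P_A(c)$ непуста; зафиксируем $y \in P_A(c)$, тогда $\|c - y\| = t$.

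Ключевой шаг — сдвинуть $c$ по лучу, выходящему из $y$ через $c$, на множество уровня $\{x : |xA| = r\}$ и проверить, что это множество лежит в $\partial B_r(A)$. Положим
\[
p := \Bigl(1 - \tfrac{r}{t}\Bigr) y + \tfrac{r}{t}\, c = y + \tfrac{r}{t}(c - y).
\]
Так как $\tfrac{r}{t} \ge 0$ и $c \notin A$, утверждение~\ref{Proj}, применённое к выпуклому компакту $A$, точке $c$ и её проекции $y$, даёт $y \in P_A(p)$; следовательно, $|pA| = \|p - y\| = \tfrac{r}{t}\|c - y\| = r$, так что $p \in B_r(A)$, а также $\|p - c\| = \bigl(\tfrac{r}{t} - 1\bigr)\|c - y\| = r - t < \delta$ ввиду $t > r - \delta$. Чтобы убедиться, что $p \in \partial B_r(A)$, заметим, что $B_r(A) = \{x : |xA| \le r\}$ замкнуто (непрерывность функции $x \mapsto |xA|$, утверждение~\ref{Mendelson}) и содержит $p$; остаётся предъявить точки вне $B_r(A)$, сколь угодно близкие к $p$. Для малого $s > 0$ положим $p_s := p + \tfrac{s}{r}(p - y)$; это точка вида $(1-\lambda)y + \lambda p$ с $\lambda = 1 + \tfrac{s}{r} \ge 0$, а так как $|pA| = r > 0$ влечёт $p \notin A$ и $y \in P_A(p)$, то утверждение~\ref{Proj} снова даёт $y \in P_A(p_s)$, откуда $|p_s A| = \bigl(1 + \tfrac{s}{r}\bigr) r = r + s > r$, то есть $p_s \notin B_r(A)$, тогда как $\|p_s - p\| = s \to 0$. Значит, $p \in \partial B_r(A)$.

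Соединяя сказанное: $|c\, \partial B_r(A)| \le \|p - c\| < \delta \le \gamma$, поэтому $|C\, \partial B_r(A)| \le |c\, \partial B_r(A)| < \gamma$, что противоречит равенству $|C\, \partial B_r(A)| = \gamma$. Следовательно, такой точки $c$ не существует, то есть $|cA| \le r - \delta$ для всех $c \in C$, а это и означает $C \subset B_{r-\delta}(A)$.

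Главным препятствием я ожидаю проверку того, что построенная точка $p$ лежит именно на $\partial B_r(A)$, а не просто на множестве уровня $\{x : |xA| = r\}$: в произвольном метрическом пространстве эти множества могут не совпадать, и приведённое рассуждение существенно опирается на линейную структуру $X$ вместе с выпуклостью $A$ через утверждение~\ref{Proj} (применяемое дважды — один раз чтобы «посадить» $p$ на множество уровня, другой чтобы выйти за его пределы). Остальные ингредиенты — выбор луча через $y$ и $c$, перемасштабирование в духе суммы Минковского и подсчёт расстояний — рутинны.
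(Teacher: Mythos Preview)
Your proof is correct and takes a genuinely different route from the paper's. The paper projects the offending point onto $B_{r-\delta}(A)$ (not onto $A$), then shoots a ray from the projection through the point and invokes the auxiliary lemma~\ref{intersection} to hit $\partial B_r(A)$; it then needs lemma~\ref{sum} to rewrite $\partial B_r(A)$ as $\partial B_\delta\bigl(B_{r-\delta}(A)\bigr)$ and lemma~\ref{bound} to read off the distance $\delta$. You instead project $c$ directly onto $A$ and \emph{construct} the boundary point $p$ explicitly by rescaling along the segment $[y,c]$, using only утверждение~\ref{Proj} (twice) and the continuity of $x\mapsto|xA|$ from утверждение~\ref{Mendelson}. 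This buys you a cleaner argument that bypasses lemmas~\ref{intersection}, \ref{sum}, and~\ref{bound} entirely, and makes the key estimate $\|p-c\|=r-t<\delta$ immediate; the paper's detour through $B_{r-\delta}(A)$ is not needed once one observes, as you do, that the ray from any $y\in P_A(c)$ through $c$ is parametrised by the distance to $A$.
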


\begin{proof}

Допустим противное, что $C\setminus B_{r - \delta}(A)\neq \emptyset.$ Пусть $x\in  C\setminus B_{r - \delta}(A)$ и $y\in P_{B_{r - \delta}(A)}(x).$ Выпустим луч $l$ из точки $y,$ проходящий через $x.$ Ввиду ограниченности множества $A$ имеем $B_r(A)\in \mathcal{H}(X).$ Отсюда по лемме~\ref{intersection} получаем $l\cap \partial B_r(A)\neq \emptyset.$ Пусть $z\in l \cap \partial B_r(A).$ По лемме~\ref{sum} справедливо $\partial B_r(A) = \partial B_{\delta}\bigl(B_{r-\delta}(A)\bigr).$ Поэтому $z \in \partial B_{\delta}\bigl(B_{r-\delta}(A)\bigr).$ При этом согласно утверждению~\ref{Proj} верно $y\in P_{B_{r - \delta}(A)}(z).$ Значит, в силу леммы~\ref{bound} $$||z-y|| = \delta.$$ Но $x\in C$, $z\in \partial B_r(A)$ и по условию $\bigl|C\, \partial B_r(A)\bigr| = \gamma,$ поэтому $$||z - x|| \ge \gamma.$$ При этом $$\delta\le\min\{r, \gamma\}\le \gamma.$$ Следовательно, $$\delta = ||z-y|| = ||z - x|| + ||x - y|| \ge \gamma + ||x - y||.$$ Отсюда $||x - y|| = 0.$ Таким образом, $x = y.$ Значит, $x\in \partial B_{r - \delta}(A)\subset B_{r - \delta}(A).$ Но по предположению $x\in  C\setminus B_{r - \delta}(A).$ Получили противоречие. Значит, $C\setminus B_{r - \delta}(A) = \emptyset,$ то есть $C\subset B_{r - \delta}(A).$ Лемма доказана.

\end{proof}

\begin{lem}\label{aux}
Пусть $M, N\in \mathcal{H}(X),$ $|M \, N| = \gamma>0$ и $0<\varepsilon<\gamma.$ Тогда $\bigl|U_{\varepsilon}(M)\,\, N\bigr| = \gamma - \varepsilon.$
\end{lem}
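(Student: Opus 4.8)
The plan is to prove the two inequalities $\bigl|U_{\varepsilon}(M)\,N\bigr| \ge \gamma - \varepsilon$ and $\bigl|U_{\varepsilon}(M)\,N\bigr| \le \gamma - \varepsilon$ separately; recall that $\bigl|U_{\varepsilon}(M)\,N\bigr| = \inf\bigl\{|pN| : p \in U_{\varepsilon}(M)\bigr\}$ by Definition~\ref{distance}, and that $U_{\varepsilon}(M) \supset M \ne \emptyset$, so this infimum is over a nonempty set.

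First I would establish the lower bound. Fix an arbitrary $p \in U_{\varepsilon}(M)$. The hypothesis $|M\,N| = \gamma$ means precisely that $|mN| \ge \gamma$ for every $m \in M$. Hence, by the triangle inequality for the point-to-set distance, $|pN| \ge |mN| - |pm| \ge \gamma - |pm|$ for all $m \in M$; taking the infimum over $m \in M$ gives $|pN| \ge \gamma - |pM|$, and since $p\in U_{\varepsilon}(M)$ forces $|pM| < \varepsilon$, we obtain $|pN| > \gamma - \varepsilon$. As $p$ was arbitrary, $\bigl|U_{\varepsilon}(M)\,N\bigr| \ge \gamma - \varepsilon$.

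Next I would prove the upper bound by exhibiting test points of $U_{\varepsilon}(M)$ whose distance to $N$ approaches $\gamma - \varepsilon$. Since $X$ is finite-dimensional, the sets $M, N \in \mathcal{H}(X)$ are compact; by Assertion~\ref{Mendelson} the function $m \mapsto |mN|$ is continuous on $M$, so by compactness it attains its minimum value $\gamma$ at some point $m_{0} \in M$. Choosing $n_{0} \in P_{N}(m_{0})$ gives $|m_{0}n_{0}| = |m_0 N| = \gamma$. For $t \in [0, \varepsilon/\gamma)$ put $p_{t} = (1-t)m_{0} + t n_{0}$; then $|p_{t}M| \le |p_{t}m_{0}| = t\gamma < \varepsilon$, so $p_{t} \in U_{\varepsilon}(M)$, while $|p_{t}N| \le |p_{t}n_{0}| = (1-t)\gamma$. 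Since this holds for every $t \in [0,\varepsilon/\gamma)$ and $(1-t)\gamma \to \gamma - \varepsilon$ as $t \to \varepsilon/\gamma$, we conclude $\bigl|U_{\varepsilon}(M)\,N\bigr| \le \gamma - \varepsilon$. Together with the lower bound this yields the claimed equality.

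The only thing that genuinely needs care is that the equality $|M\,N| = \gamma$ is used in two complementary ways: as a uniform lower estimate $|mN| \ge \gamma$ valid on all of $M$ (for the first inequality) and as an infimum that is actually attained at some $m_0$ (for the second). Finite-dimensionality of $X$, which makes $M$ and $N$ compact, is exactly what supplies both the minimiser $m_{0}$ (via Assertion~\ref{Mendelson} and the Weierstrass theorem) and the nonempty metric projection $P_N(m_0)$. Beyond this there is no real obstacle — only routine bookkeeping with point-to-set distances.
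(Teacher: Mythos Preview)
Your proof is correct and follows essentially the same two-inequality strategy as the paper: a triangle-inequality lower bound and an upper bound obtained from points on the segment $[m_0,n_0]$ between a minimising pair. The only cosmetic differences are that the paper phrases the lower bound as a proof by contradiction and, for the upper bound, works with the single boundary point $z$ at distance exactly $\varepsilon$ from $M$ (arguing it lies in the closure of $U_\varepsilon(M)$) rather than with your limiting family $p_t$; your version is slightly more direct but the underlying idea is identical.
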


\begin{proof}

Покажем, что $\bigl|U_{\varepsilon}(M)\,\, N\bigr| \le \gamma - \varepsilon.$ В силу компактности $M$ и $N$ существуют такие $x\in M,$ $y\in N,$ что длина отрезка $[x, y] = \bigl\{(1-\lambda) x +  \lambda y \text{ } \bigl | \text{ } \lambda \in [0,1]\bigr\}$ равна $\gamma.$ Значит, ввиду $\varepsilon<\gamma$ можно выбрать такое $\lambda'  \in (0,1),$ что $||x - (1-\lambda') x -  \lambda' y|| = \varepsilon.$ Обозначим точку $(1-\lambda') x +  \lambda' y$ через $z.$ Значит, $|z\, M| \le \varepsilon,$ так как $x\in M.$ 

Покажем, что на самом деле $|z\, M| = \varepsilon.$ Допустим, что $|z\, M| = \varepsilon' < \varepsilon.$ Отметим, что ввиду $z\in (x,y)$ верно $||z - y|| = \gamma - \varepsilon.$ В силу компактности $M$ найдётся точка $t\in M$ такая, что $||t - z|| = \varepsilon'.$ Но тогда длина ломаной, состоящей из двух звеньев $[t,z]$ и $[z,y],$ равна $\varepsilon' + \gamma - \varepsilon < \gamma.$ Противоречие с тем, что $|M \, N| = \gamma.$ Поэтому $|z\, M| = \varepsilon.$ 

В силу линейности пространства $X$ имеем $\bigl|z\,\, U_{\varepsilon}(M)\bigr| = 0.$ Отсюда ввиду $||z - y|| = \gamma - \varepsilon$ получаем $\bigl|U_{\varepsilon}(M)\,\, N\bigr| \le \gamma - \varepsilon.$

Покажем, что $\bigl|U_{\varepsilon}(M)\,\, N\bigr| = \gamma - \varepsilon.$ Допустим, $\bigl|U_{\varepsilon}(M)\,\, N\bigr| < \gamma - \varepsilon.$ Пусть $a\in B_{\varepsilon}(M)$ и $b\in N$ такие, что $||a-b|| < \gamma - \varepsilon.$ Рассмотрим $c\in P_M(a),$ то есть $||c-a||\le \varepsilon.$ Но тогда длина ломаной, состоящей из двух звеньев $[c,a]$ и $[a,b],$ строго меньше, чем $\gamma.$ Получили противоречие с тем, что $|M \, N| = \gamma.$ Значит, $\bigl|U_{\varepsilon}(M)\,\, N\bigr| = \gamma - \varepsilon.$ Лемма доказана.

\end{proof}

\subsection{Разные вспомогательные утверждения и обозначения}

В данном подразделе $X$ --- конечномерное нормированное пространство над полем $\mathbb{R}.$

Введём отображение $\Conv\colon \mathcal{H}(X) \rightarrow \mathcal{H}(X)$,  которое каждому элементу гиперпространства $\mathcal{H}(X)$ ставит в соответствие его выпуклую оболочку.

\begin{ass}[\cite{Tropin}]\label{Lip}
Преобразование $\Conv$ является $1$-липшицевым.
\end{ass}

Согласно работе~\cite{arxiv} справедлив следующий результат.

\begin{thm}[\cite{arxiv}]\label{cont}
Пусть $A$ и $B$ --- непустые выпуклые компакты в $X.$ Тогда $$f\colon \bigl[|A\,B|, +\infty\bigr) \rightarrow \mathcal{H}(X),$$ $$f\colon r \mapsto B_r(A)\cap B,$$ непрерывна.
\end{thm}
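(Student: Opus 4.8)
The plan is to fix $r_0 \ge |A\,B|$ and show continuity of $f$ at $r_0$, splitting into the two inequalities encoded in the Hausdorff metric: for $r$ close to $r_0$ we need $f(r) = B_r(A)\cap B \subset B_\varepsilon\bigl(f(r_0)\bigr)$ and $f(r_0) = B_{r_0}(A)\cap B \subset B_\varepsilon\bigl(f(r)\bigr)$. The case $r > r_0$ is the easy direction: since $B_{r_0}(A)\subset B_r(A)$ we get $f(r_0)\subset f(r)$, so one inclusion is free, and for the other I would use Lemma~\ref{sum}, which gives $B_r(A) = B_{r-r_0}\bigl(B_{r_0}(A)\bigr)$, hence $f(r)\subset B_{r-r_0}\bigl(B_{r_0}(A)\bigr)\cap B$; a point $x$ here lies within $r-r_0$ of some $p\in B_{r_0}(A)$, and since both $x$ and the relevant nearby point of $B_{r_0}(A)\cap B$ need to be produced, I would take the metric projection of $x$ onto the convex compact $B_{r_0}(A)$ (Assumption~\ref{Proj}) and argue the segment from $x$ to that projection stays in the convex set $B$ once $r-r_0$ is small relative to the "room" available. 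This is where convexity of $A$ (hence of $B_{r_0}(A)$, by the Assumption following Lemma~\ref{sum}) and of $B$ is essential.

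For the harder direction, $r < r_0$, I would treat two subcases according to whether $r_0 = |A\,B|$ or $r_0 > |A\,B|$; if $r < |A\,B|$ then $f(r) = \emptyset$ and there is nothing below $|A\,B|$, so $f$ is only required to be continuous on its actual domain $\bigl[|A\,B|, +\infty\bigr)$, meaning at the left endpoint we only approach from the right. So assume $r_0 > |A\,B|$ and $r \nearrow r_0$. Now $f(r)\subset f(r_0)$ is automatic, and the content is: every point of $f(r_0) = B_{r_0}(A)\cap B$ is within $\varepsilon$ of $B_r(A)\cap B$ once $r$ is close enough to $r_0$. The key obstacle is that a point $x\in B_{r_0}(A)\cap B$ with $|xA| = r_0$ exactly (a boundary point, in the sense of Lemma~\ref{bound}, of $B_{r_0}(A)$) must be moved \emph{toward} $A$ to decrease its distance to $A$ below $r$, while simultaneously staying inside $B$.

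To handle this I would pick $a_x\in P_A(x)$ and slide $x$ along $[x, a_x]$ a small amount $t$; by convexity of $A$, Assumption~\ref{Proj} shows $a_x$ remains a nearest point, so the distance to $A$ drops by exactly $t$, putting the new point into $B_{r-}(A)$; but I also need the new point to lie in $B$, which is not obvious since $x$ need not be an interior point of $B$ and $a_x$ need not be in $B$. This is the main difficulty, and the fix is to use that there \emph{exists} a point of $B$ close to $A$: since $r_0 > |A\,B|$, choose $b_0\in B$ and $a_0\in A$ with $|a_0 b_0| = |A\,B| < r_0$, then for $x\in f(r_0)$ form the convex combination $x_\lambda = (1-\lambda)x + \lambda b_0$, which lies in $B$ by convexity of $B$, and estimate $|x_\lambda A| \le (1-\lambda)|xA| + \lambda|b_0 A| \le (1-\lambda)r_0 + \lambda|A\,B|$ using that $|\cdot\,A|$ is convex (it is the distance to a convex set; this follows from Assumption~\ref{Mendelson} together with convexity, or one cites it directly) — this is strictly less than $r_0$ for $\lambda > 0$, and by choosing $\lambda = \lambda(\varepsilon, r_0 - r)$ appropriately small one gets $x_\lambda \in B_r(A)\cap B$ and $\|x - x_\lambda\| = \lambda\|x - b_0\| \le \lambda\,\operatorname{diam}(B) \le \varepsilon$. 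Assembling the bounds, $d_H\bigl(f(r), f(r_0)\bigr)\to 0$, which gives continuity at $r_0$; the endpoint $r_0 = |A\,B|$ is covered by the $r > r_0$ analysis alone.
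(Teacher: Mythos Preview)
The paper does not prove this theorem; it is quoted from \cite{arxiv} and used as a black box, so there is no argument here to compare yours against. I therefore evaluate your sketch on its own.

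Your treatment of $r \uparrow r_0$ is correct and is the real content of the result: choosing $b_0 \in B$ with $|b_0 A| = |A\,B| < r_0$ and sliding each $x \in f(r_0)$ toward $b_0$ is exactly right, and the bound $|x_\lambda A| \le (1-\lambda)|xA| + \lambda\,|b_0 A|$ (valid because $A$ is convex: take $a_x\in P_A(x)$, $a_0\in P_A(b_0)$ and note $(1-\lambda)a_x+\lambda a_0\in A$) gives the uniform estimate you need.

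The sketch for $r \downarrow r_0$, however, has a genuine gap. You propose to project $x \in f(r)\setminus f(r_0)$ onto $B_{r_0}(A)$ and ``argue the segment from $x$ to that projection stays in $B$ once $r - r_0$ is small relative to the room available.'' But $x$ may lie on $\partial B$ (take $B$ a simplex and $x$ a vertex), and the direction toward $P_{B_{r_0}(A)}(x)$ can point straight out of $B$; there is no uniform ``room'' to invoke, no matter how small $r-r_0$ is. Two clean fixes are available. First, for $r_0 > |A\,B|$ you can simply reuse your own convex-combination idea with the same anchor $b_0$: taking $\lambda = (r - r_0)/(r - |A\,B|)$ lands $x_\lambda$ in $B_{r_0}(A)\cap B$ with $\|x - x_\lambda\| \le (r-r_0)\operatorname{diam}(B)/(r_0 - |A\,B|)$. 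Second, and this covers the endpoint $r_0 = |A\,B|$ as well (where the previous $\lambda$ degenerates to $1$), observe that $r\mapsto f(r)$ is a decreasing family of nonempty compacts with $\bigcap_{r > r_0} f(r) = f(r_0)$, and such a family converges to its intersection in $d_H$ by a routine compactness argument.
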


В разделе~\ref{Saving} будет нужна следующая лемма.

\begin{lem}\label{interior}
Пусть $B$ --- выпуклое подмножество $X$ с непустой внутренностью и $U$ --- открытое подмножество $X.$ Тогда если $\partial B \cap U\neq \emptyset,$ то $\Int B \cap U\neq \emptyset.$
\end{lem}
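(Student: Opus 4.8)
The plan is to reduce the statement to a convexity/geometry argument using the fact that a convex set with nonempty interior equals the closure of its interior. First I would fix a point $x\in \partial B \cap U$. Since $\Int B \neq \emptyset$, pick any point $y \in \Int B$; note $x \neq y$ because $x$ lies on the boundary while $y$ is interior. The key classical fact about convex sets that I would invoke (or prove in a line) is that for a convex set $B$ with $y \in \Int B$ and $x \in B$, the whole half-open segment $[y, x)$ lies in $\Int B$; equivalently, points of the form $(1-\lambda)x + \lambda y$ with $\lambda \in (0,1]$ are interior points of $B$.

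Next I would use that $U$ is open and $x \in U$. Consider the ray from $x$ through $y$, parametrized as $z_\lambda = (1-\lambda)x + \lambda y$. As $\lambda \to 0^+$ we have $z_\lambda \to x$, so by openness of $U$ there is a sufficiently small $\lambda_0 \in (0,1]$ with $z_{\lambda_0} \in U$. By the convexity fact from the previous step, $z_{\lambda_0} \in \Int B$. Hence $z_{\lambda_0} \in \Int B \cap U$, which is therefore nonempty, and that is exactly the conclusion.

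The one step requiring a small argument is the claim that $[y,x) \subset \Int B$ when $y \in \Int B$ and $x \in B$. I would justify it as follows: since $y \in \Int B$, there is $\varepsilon > 0$ with $B_\varepsilon(y) \subset B$. For $\lambda \in (0,1]$ and $z_\lambda = (1-\lambda)x + \lambda y$, the set $(1-\lambda)x + \lambda B_\varepsilon(y)$ is a ball of radius $\lambda\varepsilon$ around $z_\lambda$, and it is contained in $B$ by convexity (it is a convex combination, with weights $1-\lambda$ and $\lambda$, of the point $x \in B$ and the set $B_\varepsilon(y) \subset B$), so $z_\lambda \in \Int B$. I expect this to be the only place where genuine care is needed; everything else is continuity of the parametrization and openness of $U$. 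Note that no finite-dimensionality is really used here beyond what is already standing in the ambient assumptions, so the proof is short and self-contained.
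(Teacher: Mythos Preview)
Your argument is essentially correct and pleasantly direct, but there is one small gap: you pass from $x \in \partial B$ to ``$x \in B$'' without justification, and this need not hold, since the lemma does not assume $B$ is closed and $\partial B \subset \Cl B$ only. In your convexity step you explicitly use $x \in B$ to conclude $(1-\lambda)x + \lambda B_\varepsilon(y) \subset B$. The standard fix is to quote the line-segment principle in its full form: for convex $B$, if $y \in \Int B$ and $x \in \Cl B$, then $(y,x] \subset \Int B$. (A quick proof: given $\lambda \in (0,1)$, pick $x' \in B$ with $\|x - x'\| < \lambda\varepsilon/(2(1-\lambda))$; then $(1-\lambda)x' + \lambda B_\varepsilon(y) \subset B$ is a ball of radius $\lambda\varepsilon$ whose centre is within $\lambda\varepsilon/2$ of $z_\lambda$, hence $z_\lambda$ has a neighbourhood inside $B$.) With that correction your proof goes through unchanged.

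As a comparison: the paper proceeds differently and more briefly. It invokes the identity $\Cl(\Int B) = \Cl B$, valid for any convex set with nonempty interior, and then simply observes that $x \in \partial B \subset \Cl B = \Cl(\Int B)$ is a point of closure of $\Int B$, so the open neighbourhood $U$ of $x$ must meet $\Int B$. Your route is more constructive (it exhibits an explicit point of $\Int B \cap U$ on the segment toward an interior point) and is self-contained, not relying on $\Cl(\Int B) = \Cl B$ as a black box; in fact, your segment argument is exactly how one proves that identity. Neither approach actually needs finite-dimensionality, as you note.
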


\begin{proof}

Известно~\cite{Int_1, Int_2}, что в конечномерном нормированном пространстве для любого выпуклого множества $B$ с непустой внутренностью верно 
\begin{equation}\label{cl}
\Cl (\Int B) = \Cl B.
\end{equation}
Пусть $x\in \partial B \cap U.$ Так как $U$ --- открытое подмножество $X$ и $x\in U,$ то $U$ является окрестностью точки $x.$ Из~(\ref{cl}) получаем, что любая точка из $\Cl B$ является точкой прикосновения для $\Int B.$ Но $x\in \partial B\subset \Cl B.$ Значит, $\Int B\cap U\neq \emptyset.$ Лемма доказана.

\end{proof}

\section{Основная часть}
\markright{Основная часть}

Как отмечалось выше, в данной работе будут рассматриваться только гиперпространства $\mathcal{H}(X)$, построенные над конечномерными нормированными пространствами, а в качестве метрики на них будет браться расстояние Хаусдорфа. 

Итак, пусть $X$ всюду далее --- конечномерное нормированное пространство над полем $\mathbb{R}$, $A = \{A_1,\ldots,A_n\}\subset \mathcal{H}(X)$ --- произвольная (если не оговорено противное) граница, $d = \{d_1,\ldots,d_n\}\in \Omega(A)$ и $K_d\in \Sigma_d(A)$ --- максимальный компакт Штейнера. 

\subsection{О некоторых свойствах выпуклых оболочек}\label{Convex}

Отображение $\Conv$, вообще говоря, не сохраняет метрику Хаусдорфа $d_H$. Продемонстрируем этот факт на примере, см. рис.~\ref{1:image}. Пусть $K=\{k_1, k_2, k_3, k_4, k_5\}$ и $L=\{l_1, l_2, l_3, l_4\}$, где ближайшая точка из $L$ для $k_i$ --- это $l_i$, $|k_il_i|=c_1$ и $|k_5l_i|=c_2>c_1.$ Замечаем, что $d_H(K,L)=c_2$ и $d_H\bigl(\Conv(K),\Conv(L)\bigr)=c_1.$

\begin{figure}[h!]
	\begin{center}
		\begin{tikzpicture}
			\fill[fill=black](-1.2,-1.7) circle (2pt) node[left] {$k_1$};			
			\fill[fill=black](-0.9,-1.7) circle (1pt) node[below] {$l_1$};
			
			\fill[fill=black](-1.2,0) circle (2pt) node[left] {$k_2$};			
			\fill[fill=black](-0.9,0) circle (1pt) node[above] {$l_2$};
			
			\fill[fill=black](1.2,0) circle (2pt) node[right] {$k_3$};
			\fill[fill=black](0.9,0) circle (1pt) node[above] {$l_3$};
			
			\fill[fill=black](1.2,-1.7) circle (2pt) node[right] {$k_4$};
			\fill[fill=black](0.9,-1.7) circle (1pt) node[below] {$l_4$};
							
		         \fill[fill=black](0,-0.85) circle (2pt) node[above] {$k_5$};			
				
		\end{tikzpicture}
		\caption{Случай, когда $d_H\bigl(\Conv(K),\Conv(L)\bigr)<d_H(K,L)$.}
		\label{1:image}
	\end{center}
\end{figure}

Заметим, что в случае выше справедливо также следующее неравенство: $d_H\bigl(K,\Conv(L)\bigr)<d_H(K,L)$. Однако если взять выпуклую оболочку только одного компакта, то расстояние по Хаусдорфу может и увеличиться, см. рис.~\ref{2:image}. А именно, пусть $K=\{k_1, k_2\}$ и $L=\{l_1, l_2\}$, где $|k_il_i|=c$. Замечаем, что $d_H(K,L)=c$ и $d_H\bigl(K,\Conv(L)\bigr)=\sqrt{c^2+(\frac{|l_1l_2|}{2})^2}>c=d_H(K,L).$

\begin{figure}[h!]
	\begin{center}
		\begin{tikzpicture}
			\fill[fill=black](-1.2,-0.7) circle (1pt) node[left] {$l_1$};			
			
			\fill[fill=black](-1.2,0) circle (2pt) node[left] {$k_1$};			
			
			\fill[fill=black](1.2,0) circle (2pt) node[right] {$k_2$};
			
			\fill[fill=black](1.2,-0.7) circle (1pt) node[right] {$l_2$};
										
		\end{tikzpicture}
		\caption{Случай, когда $d_H\bigl(K,\Conv(L)\bigr)>d_H(K,L)$.}
		\label{2:image}
	\end{center}
\end{figure}

Из сказанного выше возникает вопрос: когда $\Conv$ сохранит расстояние между двумя компактами? Оказывается, справедлива следующая теорема.

\begin{thm}\label{SaveDist}
Пусть $A,B\in \mathcal{H}(X)$ и $d_H(A,B)=r$. Преобразование $\Conv$ сохраняет расстояние между $A,B$, если и только если существует $a\in A$ такая, что $U_{r}(a)\cap \Conv(B)=\emptyset,$ или существует $b\in B$ такая, что $U_{r}(b)\cap \Conv(A)=\emptyset.$
\end{thm}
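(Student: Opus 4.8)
План доказательства таков. Поскольку по утверждению~\ref{Lip} преобразование $\Conv$ является $1$-липшицевым, всегда верно неравенство $d_H\bigl(\Conv(A),\Conv(B)\bigr)\le d_H(A,B)=r$. Поэтому сохранение расстояния равносильно обратному неравенству $d_H\bigl(\Conv(A),\Conv(B)\bigr)\ge r$, и всё доказательство сводится к проверке того, что это обратное неравенство эквивалентно условию из формулировки. Я бы действовал, выведя сначала удобную формулу для $d_H\bigl(\Conv(A),\Conv(B)\bigr)$, а затем переведя её на язык шаров $U_r(\,\cdot\,)$.

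Ключевой шаг — редукция расстояния между выпуклыми оболочками к расстояниям от точек исходных множеств $A$ и $B$. Напомним, что для непустых компактов $P,Q$ справедливо $d_H(P,Q)=\max\bigl\{\max_{p\in P}|p\,Q|,\ \max_{q\in Q}|q\,P|\bigr\}$, где максимумы достигаются в силу утверждения~\ref{Mendelson} и компактности. Применяя это к $P=\Conv(A)$, $Q=\Conv(B)$ (оба суть непустые компакты, так как в конечномерном пространстве выпуклая оболочка компакта компактна), остаётся показать, что $\max_{x\in \Conv(A)}|x\,\Conv(B)| = \max_{a\in A}|a\,\Conv(B)|$ и, симметрично, $\max_{y\in \Conv(B)}|y\,\Conv(A)| = \max_{b\in B}|b\,\Conv(A)|$. Для этого я бы воспользовался тем, что функция $x\mapsto |x\,\Conv(B)|$ выпукла (как расстояние до выпуклого множества $\Conv(B)$), а всякая точка $x\in\Conv(A)$ по теореме Каратеодори есть конечная выпуклая комбинация $x=\sum_i\lambda_i a_i$ точек $a_i\in A$; тогда $|x\,\Conv(B)|\le\sum_i\lambda_i|a_i\,\Conv(B)|\le\max_{a\in A}|a\,\Conv(B)|$, а обратное неравенство следует из включения $A\subset\Conv(A)$. В итоге получается формула $$d_H\bigl(\Conv(A),\Conv(B)\bigr)=\max\Bigl\{\max_{a\in A}|a\,\Conv(B)|,\ \max_{b\in B}|b\,\Conv(A)|\Bigr\}.$$

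Последний шаг — перевод на язык шаров. Для точки $a\in A$ неравенство $|a\,\Conv(B)|\ge r$ означает в точности, что никакая точка $\Conv(B)$ не лежит в открытом шаре $U_r(a)$, то есть $U_r(a)\cap\Conv(B)=\emptyset$; аналогично для точек $b\in B$. Поэтому из полученной формулы следует, что $d_H\bigl(\Conv(A),\Conv(B)\bigr)\ge r$ равносильно условию $\max_{a\in A}|a\,\Conv(B)|\ge r$ или $\max_{b\in B}|b\,\Conv(A)|\ge r$, то есть тому, что существует $a\in A$ с $U_r(a)\cap\Conv(B)=\emptyset$, либо существует $b\in B$ с $U_r(b)\cap\Conv(A)=\emptyset$; вместе с уже отмеченным неравенством $d_H\bigl(\Conv(A),\Conv(B)\bigr)\le r$ это и даёт утверждение теоремы. Главным техническим местом здесь является именно указанная формула для $d_H\bigl(\Conv(A),\Conv(B)\bigr)$, точнее, редукция супремума выпуклой функции $|\,\cdot\,\Conv(B)|$ с выпуклой оболочки $\Conv(A)$ на само множество $A$; всё остальное — прямые следствия определений и утверждения~\ref{Lip}.
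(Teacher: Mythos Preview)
Ваше доказательство корректно и по существу использует те же ингредиенты, что и доказательство в статье: $1$-липшицевость $\Conv$ (утверждение~\ref{Lip}), теорему Каратеодори и выпуклые комбинации. Разница лишь в организации: статья разбирает необходимость и достаточность по отдельности, причём необходимость --- от противного, напрямую строя для произвольной точки $a'=\sum\lambda_i a_i\in\Conv(A)$ близкую точку $b'=\sum\lambda_i b_i\in\Conv(B)$; Вы же сначала выводите промежуточную формулу
\[
d_H\bigl(\Conv(A),\Conv(B)\bigr)=\max\Bigl\{\max_{a\in A}|a\,\Conv(B)|,\ \max_{b\in B}|b\,\Conv(A)|\Bigr\},
\]
опираясь на выпуклость функции расстояния до выпуклого множества, после чего обе импликации считываются одновременно. Ваша упаковка несколько компактнее и даёт полезный самостоятельный факт; подход статьи чуть более элементарен, так как не апеллирует явно к выпуклости функции $x\mapsto|x\,\Conv(B)|$, а проводит оценку напрямую.
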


\begin{proof}

Необходимость. По теореме Каратеодори любую точку $a'$ из $\Conv(A)$ можно представить в виде $a'=\sum\limits_{i=1}^{N+1} \lambda_ia_i$, где $a_i\in A$, $\lambda_i\ge0$, $\sum\limits_{i=1}^{N+1} \lambda_i=1$ и $N$ --- размерность пространства $X$. Так как $A\subset U_r\bigl(\Conv(B)\bigr)$, то для любой $a_i\in A$ существует $b_i\in \Conv(B)$ такая, что $|a_ib_i|<r=d_H(A,B)$. Замечаем, что $b'=\sum\limits_{i} \lambda_ib_i\in \Conv(B)$. Следовательно, $||a'-b'||=||\sum\limits_{i} \lambda_i(a_i-b_i)||\le \sum\limits_{i} \lambda_i||a_i-b_i||<r$. Значит, $\Conv(A)\subset U_r\bigl(\Conv(B)\bigr).$ Аналогично получаем, что $\Conv(B)\subset U_r\bigl(\Conv(A)\bigr).$ Значит, $$d_H\bigl(\Conv(A),\Conv(B)\bigr)=\min\bigl\{s : \Conv(A)\subset B_s\bigl(\Conv(B)\bigr), \Conv(B)\subset B_s\bigl(\Conv(A)\bigr)\bigr\}<r$$ --- противоречие. Поэтому существует $a\in A$ такая, что $U_{r}(a)\cap \Conv(B)=\emptyset,$ или существует $b\in B$ такая, что $U_{r}(b)\cap \Conv(A)=\emptyset.$

Достаточность. Не ограничивая общности, пусть существует $a\in A$ такая, что $U_{r}(a)\cap \Conv(B)=\emptyset.$ Допустим, что расстояние не сохранилось. Согласно утверждению~\ref{Lip} оно могло лишь уменьшиться, то есть $r>d_H\bigl(\Conv(A),\Conv(B)\bigr)$. Это означает, что для найденной точки $a\in A\subset \Conv(A)$ существует точка $b\in \Conv(B)$ такая, что $|ab|<r=d_H(A,B)$. Поэтому $b\in U_{r}(a)$. Но тогда $b\in U_{r}(a)\cap \Conv(B)$ --- противоречие.

\end{proof}

\begin{ass}\label{CapConv}
Пусть $\{K_1,\ldots , K_n\}\subset \mathcal{H}(X)$. Тогда $\Conv(\bigcap\limits_{i=1}^n K_i)\subset \bigcap\limits_{i=1}^n \Conv(K_i)$.
\end{ass}

\begin{proof}

Имеем: $\bigcap\limits_{i=1}^n K_i\subset K_j$ для всех $j$. Поэтому $\Conv(\bigcap\limits_{i=1}^n K_i)\subset \Conv(K_j)$ для всех $j$. Значит, $\Conv(\bigcap\limits_{i=1}^n K_i)\subset \bigcap\limits_{i=1}^n \Conv(K_i)$.

\end{proof}

\begin{ass}\label{BConv}
Пусть $K\in \mathcal{H}(X)$. Тогда $B_r\bigl(\Conv(K)\bigr) = \Conv\bigl(B_r(K)\bigr)$ для любого $r\ge0$.
\end{ass}

\begin{proof}

По теореме Каратеодори $$\Conv(K)=\bigcup\limits_{p_1,\ldots,p_{N+1}\in K}\bigcup\limits_{\lambda_1+\ldots+\lambda_{N+1}=1}  \lambda_1p_1+\ldots+\lambda_{N+1}p_{N+1},$$ где $\lambda_i\ge0$ и $N$ --- размерность пространства $X$. Также $$\bigcup\limits_{p_1,\ldots,p_{N+1}\in K}\bigcup\limits_{\lambda_1+\ldots+\lambda_{N+1}=1}  \lambda_1p_1+\ldots+\lambda_{N+1}p_{N+1} = \bigcup\limits_{\lambda_1+\ldots+\lambda_{N+1}=1}  \lambda_1K+\ldots+\lambda_{N+1}K.$$ Отсюда имеем в силу свойств суммы Минковского
\begin{multline*}
\\ B_r\bigl(\Conv(K)\bigr) = \Conv(K) + B_r(0) = \\
= \Biggl(\bigcup\limits_{\lambda_1+\ldots+\lambda_{N+1}=1}  \lambda_1K+\ldots+\lambda_{N+1}K\Biggr) + B_r(0) = \bigcup\limits_{\lambda_1+\ldots+\lambda_{N+1}=1}  \Bigl(\lambda_1K+\ldots+\lambda_{N+1}K + B_r(0)\Bigr) = \\
= \bigcup\limits_{\lambda_1+\ldots+\lambda_{N+1}=1}  \lambda_1\bigl(K+B_r(0)\bigr)+\ldots+\lambda_{N+1}\bigl(K+B_r(0)\bigr) = \Conv\bigl(K+B_r(0)\bigr) = \Conv\bigl(B_r(K)\bigr). \\
\end{multline*}
\end{proof}

Обозначим $\bigcap\limits_{i=1}^n B_{d_i}\bigl(\Conv(A_i)\bigr)$ через $K_d^{\Conv}$.

\begin{cor}\label{KdConv}
$\Conv(K_d)\subset  K_d^{\Conv}$.
\end{cor}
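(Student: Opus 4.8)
The corollary follows by chaining the two preceding assertions together with the explicit description of $K_d$. Recall from the introduction that $K_d=\bigcap_{i=1}^n B_{d_i}(A_i)$, so the whole task reduces to commuting $\Conv$ past an intersection and past the ball operator.

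First I would note that each $B_{d_i}(A_i)$ belongs to $\mathcal{H}(X)$: since $A_i$ is a non-empty compact in the finite-dimensional space $X$, the ball $B_{d_i}(A_i)$ is closed and bounded, hence compact, hence an element of $\mathcal{H}(X)$ (the same observation already used in the proof of Lemma~\ref{incl}). This lets me apply Assertion~\ref{CapConv} with $K_i=B_{d_i}(A_i)$, obtaining
\[
\Conv(K_d)=\Conv\Bigl(\bigcap_{i=1}^n B_{d_i}(A_i)\Bigr)\subset \bigcap_{i=1}^n \Conv\bigl(B_{d_i}(A_i)\bigr).
\]
Then I would apply Assertion~\ref{BConv} to each $A_i$ (with $r=d_i$), which gives $\Conv\bigl(B_{d_i}(A_i)\bigr)=B_{d_i}\bigl(\Conv(A_i)\bigr)$ for every $i$. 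Substituting, the right-hand side becomes $\bigcap_{i=1}^n B_{d_i}\bigl(\Conv(A_i)\bigr)$, which is exactly $K_d^{\Conv}$ by definition. Concatenating the inclusion and the equalities yields $\Conv(K_d)\subset K_d^{\Conv}$.

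There is essentially no obstacle here: the statement is a formal consequence of Assertions~\ref{CapConv} and~\ref{BConv} once one writes $K_d$ in the form $\bigcap_i B_{d_i}(A_i)$. The only point requiring a word of justification — and the only place one could slip — is checking that the sets $B_{d_i}(A_i)$ genuinely lie in $\mathcal{H}(X)$ so that Assertion~\ref{CapConv} is applicable; this is immediate from finite-dimensionality of $X$.
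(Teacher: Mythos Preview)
Your proof is correct and follows essentially the same route as the paper: apply Assertion~\ref{CapConv} to $K_d=\bigcap_i B_{d_i}(A_i)$ and then Assertion~\ref{BConv} termwise to identify the result with $K_d^{\Conv}$. Your explicit check that $B_{d_i}(A_i)\in\mathcal{H}(X)$ is a welcome detail the paper leaves implicit.
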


\begin{proof}

По утверждению~\ref{CapConv} имеем $$\Conv(K_d)=\Conv\bigl(\bigcap\limits_{i=1}^n B_{d_i}(A_i)\bigr)\subset \bigcap\limits_{i=1}^n \Conv\bigl(B_{d_i}(A_i)\bigr).$$ 

По утверждению~\ref{BConv} получаем $$\bigcap\limits_{i=1}^n \Conv\bigl(B_{d_i}(A_i)\bigr) = \bigcap\limits_{i=1}^n B_{d_i} \bigl(\Conv(A_i)\bigr)=K_d^{\Conv}.$$

Поэтому $\Conv(K_d)\subset  K_d^{\Conv}$.

\end{proof}

\begin{cor}\label{dH}
Для всех $i$ выполняется $d_H\bigl(\Conv(A_i),K_d^{\Conv}\bigr)\le d_i$.
\end{cor}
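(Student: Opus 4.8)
The plan is to verify directly the two inclusions whose conjunction yields $d_H\bigl(\Conv(A_i),K_d^{\Conv}\bigr)\le d_i$, namely $K_d^{\Conv}\subset B_{d_i}\bigl(\Conv(A_i)\bigr)$ and $\Conv(A_i)\subset B_{d_i}\bigl(K_d^{\Conv}\bigr)$. The first of these is immediate from the definition: since $K_d^{\Conv}=\bigcap_{j=1}^n B_{d_j}\bigl(\Conv(A_j)\bigr)$, in particular $K_d^{\Conv}\subset B_{d_i}\bigl(\Conv(A_i)\bigr)$.

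For the second inclusion I would start from the equality $d_H(A_i,K_d)=d_i$, which holds because $d$ is by hypothesis the solution vector attached to the maximal Steiner compact $K_d\in\Sigma_d(A)$; it gives $A_i\subset B_{d_i}(K_d)$. Applying the (monotone) operator $\Conv$ and then Proposition~\ref{BConv} with $K=K_d$ and $r=d_i$ yields $\Conv(A_i)\subset\Conv\bigl(B_{d_i}(K_d)\bigr)=B_{d_i}\bigl(\Conv(K_d)\bigr)$. By Corollary~\ref{KdConv} we have $\Conv(K_d)\subset K_d^{\Conv}$, and since the map $S\mapsto B_{d_i}(S)$ is monotone with respect to inclusion (if $S\subset T$ then $|pT|\le|pS|$, hence $B_{d_i}(S)\subset B_{d_i}(T)$), we obtain $B_{d_i}\bigl(\Conv(K_d)\bigr)\subset B_{d_i}\bigl(K_d^{\Conv}\bigr)$. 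Chaining these inclusions gives $\Conv(A_i)\subset B_{d_i}\bigl(K_d^{\Conv}\bigr)$, as desired.

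Combining the two inclusions with the definition of the Hausdorff distance then gives $d_H\bigl(\Conv(A_i),K_d^{\Conv}\bigr)\le d_i$. The only point that needs a moment's care is to record that $K_d^{\Conv}$ is a legitimate element of $\mathcal{H}(X)$, so that the Hausdorff distance is defined: it is nonempty because it contains $\Conv(K_d)\supset K_d\neq\emptyset$, and it is compact because it is a closed subset of the compact set $B_{d_i}\bigl(\Conv(A_i)\bigr)$ (a closed ball around the compact $\Conv(A_i)$). Beyond this, there is essentially no obstacle: the corollary is a direct bookkeeping consequence of Corollary~\ref{KdConv}, Proposition~\ref{BConv}, and the elementary monotonicity of the ball and convex-hull operators.
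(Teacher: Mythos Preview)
Your proof is correct and follows essentially the same route as the paper: both establish $K_d^{\Conv}\subset B_{d_i}\bigl(\Conv(A_i)\bigr)$ directly from the definition and then obtain $\Conv(A_i)\subset B_{d_i}\bigl(K_d^{\Conv}\bigr)$ by applying $\Conv$ to $A_i\subset B_{d_i}(K_d)$, invoking Proposition~\ref{BConv}, and using Corollary~\ref{KdConv} together with the monotonicity of $S\mapsto B_{d_i}(S)$. Your added remark that $K_d^{\Conv}\in\mathcal{H}(X)$ is a harmless extra justification the paper leaves implicit.
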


\begin{proof}

Имеем $K_d^{\Conv}\subset B_{d_i}\bigl(\Conv(A_i)\bigr)$. С другой стороны, так как $A_i \subset B_{d_i}(K_d),$ то $$\Conv(A_i)\subset \Conv\bigl(B_{d_i}(K_d)\bigr).$$ По утверждению~\ref{BConv} получаем $$\Conv\bigl(B_{d_i}(K_d)\bigr) = B_{d_i}\bigl(\Conv(K_d)\bigr).$$ И в силу следствия~\ref{KdConv} $$B_{d_i}\bigl(\Conv(K_d)\bigr)\subset B_{d_i}\bigl(K_d^{\Conv}\bigr).$$ Таким образом, $\Conv(A_i)\subset B_{d_i}\bigl(K_d^{\Conv}\bigr)$ для всех $i$. Следовательно, $$d_H\bigl(\Conv(A_i),K_d^{\Conv}\bigr)\le d_i.$$

\end{proof}

\subsection{Далёкие, неплотные и дискретные точки, их взаимосвязь}\label{Points}

Введём следующее определение.

\begin{dfn}\label{far_dfn}
Точку $a\in A_i$ назовём \textit{далёкой}, если $U_{d_i}(a)\cap K_d = \emptyset.$
\end{dfn}

\begin{ass}\label{Part}
Если $a\in A_i$ --- далёкая и $d_i > 0$, то $B_{d_i}(a)\cap K_d \subset \partial K_d$.
\end{ass}

\begin{proof}
Пусть $a\in A_i$ --- далёкая, то есть $U_{d_i}(a)\cap K_d = \emptyset.$ Значит, $B_{d_i}(a)\cap K_d \subset \partial B_{d_i}(a).$ В нормированном пространстве любая точка из $\partial B_{d_i}(a)$ является точкой прикосновения для $U_{d_i}(a).$ Но $B_{d_i}(a)\cap K_d\subset K_d.$ Поэтому, так как $U_{d_i}(a)\cap K_d = \emptyset$ и $d_i > 0,$ в любой окрестности точки из $B_{d_i}(a)\cap K_d$ содержатся как точки из $K_d,$ так и точки, не лежащие в $K_d.$ Следовательно, по определению граничных точек $B_{d_i}(a)\cap K_d \subset \partial K_d$.

\end{proof}

\begin{dfn}
Границу $A$, все элементы которой являются выпуклыми компактами, назовём \textit{выпуклой}.
\end{dfn}

\begin{ass}\label{F_part}
Пусть $A = \{A_1,\ldots,A_n\}$ --- выпуклая граница, $a\in A_i$ --- далёкая точка и $d_i > 0$. Тогда $a\in \partial A_i$.
\end{ass}

\begin{proof} 

Так как $a\in A_i$ --- далёкая, то $U_{d_i}(a)\cap K_d = \emptyset.$ Следовательно, так как $d_i > 0,$ то $a\in X\setminus K_d.$ Компакт $K_d$ выпуклый в силу выпуклости границы. Поэтому согласно утверждению~\ref{Proj} для точки $a$ и для любой точки $p\in P_{K_d}(a)$ верно $p\in P_{K_d}(a_{\lambda})$ при всех $\lambda\ge 0,$ где $a_{\lambda} = (1 - \lambda)p + \lambda a.$

Заметим, что $B_{d_i}(a)\cap K_d \neq \emptyset,$ так как $A_i \subset B_{d_i}(K_d)$. Значит, $|a K_d| = d_i$ ввиду того, что $a$ --- далёкая точка. Но $p\in P_{K_d}(a)$, поэтому $|a p| = d_i$ по определению~\ref{dfn_3}.

Допустим, что $a\in \Int A_i.$ Тогда в силу линейности пространства $X$ существует такое $\lambda > 1$, что $|a_{\lambda}p| > |a p| = d_i$ и при этом $a_{\lambda}\in A_i.$  Ввиду того, что $p\in P_{K_d}(a_{\lambda})$, имеем $|a_{\lambda}K_d| > d_i.$ Значит, $A_i\not \subset B_{d_i}(K_d).$ Получили противоречие с тем, что $d_i = d_H(A_i,K_d).$ Поэтому $a\in \partial A_i$.

\end{proof}

Вообще говоря, неясно, для любой ли конечной границы $A\subset \mathcal{H}(X)$ далёкие точки всегда найдутся хотя бы для одного вектора $d\in \Omega(A)$. Например, возможно, не исключена ситуация, изображённая на рисунке~\ref{Example_1}. А именно, $A = \{\mathfrak{A}, \mathfrak{B}, \mathfrak{C}\}$, где $\mathfrak{A}=\{a_1,a_2,a_3\}; \mathfrak{B}=\{b_1,b_2,b_3,b_4\}; \mathfrak{C}=\{c_1,c_2,c_3,c_4\}$. Предположим, что вектор $$d = \Bigl(d_1=d_H(\mathfrak{A},K_d); d_2=d_H(\mathfrak{B},K_d); d_3=d_H(\mathfrak{C},K_d)\Bigr),$$ по которому построен 
\begin{multline*}
K_d = \Bigl(B_{d_1}(a_1)\cap B_{d_2}(b_1)\cap B_{d_3}(c_1)\Bigr)\cup \\ \cup \Bigl(B_{d_1}(a_2)\cap B_{d_3}(c_1)\Bigr)\cup \Bigl(B_{d_1}(a_2)\cap B_{d_2}(b_3)\cap B_{d_3}(c_2)\Bigr) \cup \Bigl(B_{d_1}(a_3)\cap B_{d_2}(b_3)\Bigr)\cup \\ \cup \Bigl(B_{d_1}(a_3)\cap B_{d_2}(b_4)\cap B_{d_3}(c_4)\Bigr),
\end{multline*} 
принадлежит $\Omega(A)$. Причём, оба множества $$B_{d_1}(a_2)\cap B_{d_3}(c_1)\subset U_{d_2}(b_2)$$ и $$B_{d_1}(a_3)\cap B_{d_2}(b_3)\subset U_{d_3}(c_3)$$ одноточечны. Замечаем, что если мы сколь угодно уменьшим $d_1$, то перестроенный в соответствии с уменьшенным расстоянием $d_1$ компакт $K_d,$ обозначим его через $K_{d'}$ уже не будет компактом Штейнера, так как тогда, например, $b_2\not \in B_{d_2}(K_{d'})$, чего быть не может по определению расстояния Хаусдорфа. Аналогично, если сколь угодно уменьшим $d_2$, то $c_3\not \in B_{d_3}(K_{d'})$ и если сколь угодно уменьшим $d_3$, то снова $b_2\not \in B_{d_2}(K_{d'}).$

При этом $U_{d_1}(a_i)\cap K_d\neq \emptyset$, $U_{d_2}(b_i)\cap K_d\neq \emptyset$ и $U_{d_3}(c_i)\cap K_d\neq \emptyset$ для всех $i.$ Таким образом, если в самом деле данный вектор $d$ является решением проблемы Ферма--Штейнера для границы $A$, то у всех граничных компактов из $A$ нет далёких точек.

\begin{figure}[h!]
	\begin{center}
		\begin{tikzpicture}
		\draw[line width=1mm] (-3.3,0) circle(1);
		\fill[fill=black](-3.3,0) circle (1pt) node[below] {$a_1$};
		
		\draw[line width=0.5mm] (-2.7,1) circle(0.9);
		\fill[fill=black](-2.7,1) circle (1pt) node[above] {$b_1$};
		
		\draw[thin] (-1.85,0) circle(0.8);
		\fill[fill=black](-1.85,0) circle (1pt) node[above] {$c_1$};
		
		\draw[line width=1mm] (0,0) circle(1);
		\fill[fill=black](0,0) circle (1pt) node[below] {$a_2$};		
		
		\draw[line width=0.5mm] (-1.05,-0.6) circle(0.9);
		\fill[fill=black](-1.05,-0.6) circle (1pt) node[below] {$b_2$};
		
		\draw[line width=0.5mm] (1.5,0) circle(0.9);
		\fill[fill=black](1.5,0) circle (1pt) node[above] {$b_3$};

		\draw[thin] (0.55,1) circle(0.8);
		\fill[fill=black](0.55,1) circle (1pt) node[above] {$c_2$};
		
		\draw[line width=1mm] (3.45,0) circle(1);
		\fill[fill=black](3.45,0) circle (1pt) node[below] {$a_3$};
		
		\draw[thin] (2.4,-0.5) circle(0.8);
		\fill[fill=black](2.4,-0.5) circle (1pt) node[below] {$c_3$};
		
		\draw[thin] (4.9,0) circle(0.8);
		\fill[fill=black](4.9,0) circle (1pt) node[above] {$c_4$};
		
		\draw[line width=0.5mm] (4,1) circle(0.9);
		\fill[fill=black](4,1) circle (1pt) node[above] {$b_4$};

		\fill[fill=black](2.43,0) circle (2.5pt) node[below] {};
		\fill[fill=black](-1,0) circle (2.5pt) node[below] {};
		
		\draw[thick, dotted] (4.25,0.3) to (0.5,-3) node[below] {$K_d$};
		\draw[thick, dotted] (2.5,0) to (0.5,-3) node[below] {};
		\draw[thick, dotted] (0.8,0.4) to (0.5,-3) node[below] {};
		\draw[thick, dotted] (-1,0) to (0.5,-3) node[below] {};
		\draw[thick, dotted] (-2.48,0.3) to (0.5,-3) node[below] {};
		\end{tikzpicture}
		\caption{Случай границы без далёких точек.}
		\label{Example_1}
	\end{center} 
\end{figure}

\begin{dfn}
Точку $a\in A_i$ назовём \textit{неплотной}, если $\Int \bigl(B_{d_i}(a)\cap K_d\bigr) = \emptyset.$
\end{dfn}

\begin{rk}\label{eq}
В силу определения~\ref{far_dfn} далёкие точки всегда являются неплотными. Действительно, если $a$ --- далёкая, то $U_{d_i}(a)\cap K_d = \emptyset$ и поэтому $B_{d_i}(a)\cap K_d\subset \partial B_{d_i}(a)$ при $d_i > 0,$ либо $B_{d_i}(a)\cap K_d = \{a\}$ при $d_i = 0.$ Следовательно, $\Int \bigl(B_{d_i}(a)\cap K_d\bigr) = \emptyset,$ то есть $a$ --- неплотная точка. 
\end{rk}

Однако обратное, вообще говоря, неизвестно. А именно, неясно, всегда ли неплотные точки являются далёкими.

Замыкание множества $M\subset X$ всюду далее будем обозначать через $\Cl M.$

\begin{ass}\label{Ass_eq}
Если $\Cl (\Int K_d) = K_d,$ то все неплотные точки совпадают с далёкими.
\end{ass}

\begin{proof}

Пусть $F(A)$ --- это множество всех далёких точек границы $A$, а $L(A)$ --- это множество всех неплотных точек для $A.$ Согласно замечанию~\ref{eq} имеем $F(A)\subset L(A).$ Поэтому нужно доказать $L(A)\subset F(A)$ при условии, что $\Cl (\Int K_d) = K_d.$

Пусть $a\in A_i$ --- неплотная точка, то есть $\Int \bigl(B_{d_i}(a)\cap K_d\bigr) = \emptyset.$ Покажем, что тогда $a\in F(A)$, то есть $U_{d_i}(a)\cap K_d = \emptyset.$ Допустим противное, а именно, $U_{d_i}(a)\cap K_d \neq \emptyset.$ Возьмём $x\in U_{d_i}(a)\cap K_d.$ В силу условия $\Cl (\Int K_d) = K_d$ любая точка из $K_d$ является точкой прикосновения для $\Int K_d.$ Значит, так как $U_{d_i}(a)$ является окрестностью взятой точки $x\in K_d,$ то $U_{d_i}(a)\cap \Int K_d\neq \emptyset.$ Хорошо известно, что в топологических пространствах внутренность пересечения конечного числа множеств равна пересечению их внутренностей. Поэтому $\emptyset \neq U_{d_i}(a)\cap \Int K_d = \Int B_{d_i}(a)\cap \Int K_d = \Int \bigl(B_{d_i}(a)\cap K_d\bigr) = \emptyset.$ Получили противоречие. 

Значит, из $\Int \bigl(B_{d_i}(a)\cap K_d\bigr) = \emptyset$ следует $U_{d_i}(a)\cap K_d = \emptyset.$ Поэтому $L(A)\subset F(A).$ Отсюда $L(A) = F(A).$ Утверждение доказано.

\end{proof}

\begin{cor}\label{Cor_eq}
Если $\Int K_d\neq \emptyset$ и граница $A$ выпукла, то все неплотные точки совпадают с далёкими.
\end{cor}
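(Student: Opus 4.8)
Утверждение будет немедленным следствием утверждения~\ref{Ass_eq}, как только мы проверим, что при выполнении условий $\Int K_d\neq\emptyset$ и выпуклости границы $A$ справедливо $\Cl(\Int K_d) = K_d$. Таким образом, всё доказательство сводится к проверке этого равенства.

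Сначала я бы воспользовался известным описанием максимального компакта Штейнера $K_d = \bigcap_{i=1}^n B_{d_i}(A_i)$ (оно цитировалось во введении для ограниченно компактных пространств, каковыми конечномерные нормированные пространства являются). Так как граница $A$ выпукла, каждое $A_i$ --- непустой выпуклый компакт, значит, замкнутое ограниченное множество, и потому по доказанному выше утверждению о выпуклости шара с центром в выпуклом замкнутом множестве каждое $B_{d_i}(A_i)$ выпукло. Пересечение выпуклых множеств выпукло, поэтому $K_d$ выпукло; при этом $K_d$ --- компакт (как компакт Штейнера), в частности, замкнут, откуда $\Cl K_d = K_d$.

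Затем я бы применил тот факт, что в конечномерном нормированном пространстве для выпуклого множества $B$ с непустой внутренностью верно $\Cl(\Int B) = \Cl B$; это равенство использовалось в доказательстве леммы~\ref{interior} (см.~\eqref{cl}) и восходит к~\cite{Int_1, Int_2}. Применяя его к $B = K_d$ (которое выпукло и имеет непустую внутренность по условию), получаем $\Cl(\Int K_d) = \Cl K_d = K_d$. Тем самым выполнено условие утверждения~\ref{Ass_eq}, и все неплотные точки совпадают с далёкими, что и требовалось.

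Серьёзного препятствия здесь нет: рассуждение представляет собой короткую цепочку ссылок, и единственное, что требует небольшой аккуратности, --- это проверка выпуклости и замкнутости $K_d$, гарантирующая применимость равенства~\eqref{cl}.
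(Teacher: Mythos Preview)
Your proposal is correct and follows essentially the same approach as the paper: you verify that $K_d$ is convex (the paper says this in one line, you spell out the representation $K_d=\bigcap_i B_{d_i}(A_i)$ and the convexity of each factor), then use the identity $\Cl(\Int K_d)=\Cl K_d=K_d$ from~\cite{Int_1,Int_2} to reduce to утверждение~\ref{Ass_eq}. No differences worth noting.
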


\begin{proof}

Граница $A$ выпукла, поэтому $K_d$ --- выпуклый компакт как пересечение выпуклых множеств.

Известно~\cite{Int_1, Int_2}, что в конечномерном нормированном пространстве для любого выпуклого множества $M$ с непустой внутренностью верно $\Cl (\Int M) = \Cl M.$ Так как $K_d$ --- компакт, то $\Cl K_d = K_d.$ Следовательно, $\Cl (\Int K_d) = K_d.$

Таким образом, мы приходим к условию утверждения~\ref{Ass_eq}. Значит, в границе $A$ все неплотные точки совпадают с далёкими.

\end{proof}

\begin{thm}\label{HP}
Пусть граница $A$ является выпуклой. Тогда для любого класса $\Sigma_d(A)$ существует граничный компакт $A_i,$ содержащий далёкую точку.
\end{thm}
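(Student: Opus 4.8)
\emph{План доказательства.} План состоит в том, чтобы разобрать два случая в зависимости от того, пусто ли $\Int K_d$. Первым делом заметим, что если $d_i=0$ для некоторого $i$, то $A_i=K_d$ и, так как $U_0(a)=\emptyset$, всякая точка компакта $A_i$ далёкая; поэтому далее можно считать, что $d_i>0$ для всех $i$. Кроме того, $K_d$ выпукло как пересечение выпуклых шаров $B_{d_i}(A_i)$.

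\emph{Случай $\Int K_d=\emptyset$.} Поскольку $\Int K_d=\bigcap_i\Int B_{d_i}(A_i)\supseteq\bigcap_i U_{d_i}(A_i)$, из $\Int K_d=\emptyset$ следует, что всякая точка $x\in K_d$ удовлетворяет равенству $|x\,A_i|=d_i$ при некотором $i$ (неравенство $|x\,A_i|\le d_i$ верно всегда, так как $K_d\subseteq B_{d_i}(A_i)$). Значит, $K_d=\bigcup_{i=1}^n F_i$, где $F_i:=\{x\in K_d:|x\,A_i|=d_i\}$ замкнуто в силу утверждения~\ref{Mendelson}. По теореме Бэра о категориях, применённой к компакту $K_d$, некоторое $F_j$ содержит непустое подмножество $W$, открытое в $K_d$. Возьмём $x_1\in W$ и $a_1\in P_{A_j}(x_1)$; тогда $\|x_1-a_1\|=d_j$. Утверждается, что $a_1$ --- далёкая точка. В противном случае нашлась бы точка $y\in K_d$ с $\|y-a_1\|<d_j$; ввиду выпуклости $K_d$ весь отрезок $[x_1,y]$ лежит в $K_d$, и так как $W$ --- окрестность точки $x_1$ в $K_d$, то при некотором $\lambda\in(0,1]$ точка $z:=(1-\lambda)x_1+\lambda y$ лежит в $W\subseteq F_j$. Тогда $\|z-a_1\|\ge|z\,A_j|=d_j$, тогда как $\|z-a_1\|\le(1-\lambda)\|x_1-a_1\|+\lambda\|y-a_1\|<d_j$ --- противоречие. Значит, $a_1\in A_j$ --- далёкая точка.

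\emph{Случай $\Int K_d\ne\emptyset$.} Предполагаем противное: ни один $A_i$ не содержит далёких точек. Тогда в силу компактности множеств $A_i$ и утверждения~\ref{Mendelson} имеем $\rho_i:=\max_{a\in A_i}|a\,K_d|<d_i$ для всех $i$, так что $\varepsilon_0:=\min_i(d_i-\rho_i)>0$. Зафиксируем шар $B_r(x^*)\subseteq K_d$ с $r>0$. Опираясь на утверждение~\ref{Proj} (сдвиг точки на $s$ в направлении от её метрической проекции на выпуклый компакт $A_i$, как в доказательствах утверждения~\ref{F_part} и леммы~\ref{incl}), получаем вспомогательный факт: из $B_s(x)\subseteq B_{d_i}(A_i)$ следует $|x\,A_i|\le d_i-s$. Отсюда $x^*\in K_\varepsilon:=\bigcap_i B_{d_i-\varepsilon}(A_i)$ при $\varepsilon\le r$, то есть $K_\varepsilon\ne\emptyset$; а применяя тот же факт вдоль отрезков, соединяющих точки $z\in K_d$ с $x^*$ (используя выпуклость $K_d$), получаем $d_H(K_d,K_\varepsilon)\le(\varepsilon/r)\operatorname{diam}K_d$. Выбираем $\varepsilon>0$ так, что $\varepsilon<r$ и $(\varepsilon/r)\operatorname{diam}K_d<\varepsilon_0$. Тогда для всякого $i$: с одной стороны, $K_\varepsilon\subseteq B_{d_i-\varepsilon}(A_i)$, а с другой, для любой $a\in A_i$ верно $|a\,K_\varepsilon|\le|a\,K_d|+d_H(K_d,K_\varepsilon)<\rho_i+\varepsilon_0\le d_i$; значит, $d_H(A_i,K_\varepsilon)<d_i$. Суммируя по $i$, получаем $S(A,K_\varepsilon)<S_A$ --- противоречие с минимальностью $S_A$. Следовательно, некоторый $A_i$ содержит далёкую точку.

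\emph{Основное препятствие.} Его следует ожидать в случае $\Int K_d=\emptyset$: в нём $K_\varepsilon$ пусто и приём с уменьшением шаров из второго случая не проходит, так что далёкую точку приходится предъявлять напрямую. Ключевыми здесь являются шаг с теоремой Бэра, выделяющий целиком лежащий на одной сфере $\partial B_{d_j}(A_j)$ относительно открытый "кусок" $W$ компакта $K_d$, и строгость неравенства треугольника вдоль отрезка $[x_1,y]$.
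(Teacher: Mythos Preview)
Your argument is correct, and it takes a genuinely different route from the paper's.

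The paper runs a single contradiction argument that does \emph{not} distinguish whether $\Int K_d$ is empty. Assuming no boundary compact has a far point, it obtains $A_j\subset B_{d_j-\gamma}(K_d)$ for all $j$ via Lemma~\ref{incl}, then invokes the external continuity result Theorem~\ref{cont} (continuity of $r\mapsto B_r(A_i)\cap K_d$) to shrink \emph{one} radius $d_i$ slightly, setting $K:=B_{d_i-\delta}(A_i)\cap K_d$ and checking that $d_H(K,A_j)\le d_j$ for all $j$ while $d_H(K,A_i)<d_i$. The work of handling a possibly empty interior is hidden inside Theorem~\ref{cont}.

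Your proof avoids Theorem~\ref{cont} entirely. In the case $\Int K_d\ne\emptyset$ you shrink \emph{all} radii simultaneously to form $K_\varepsilon=\bigcap_i B_{d_i-\varepsilon}(A_i)$ and use the interior ball $B_r(x^*)\subset K_d$ together with the projection trick (the auxiliary fact ``$B_s(x)\subset B_{d_i}(A_i)\Rightarrow |x\,A_i|\le d_i-s$'') to control $d_H(K_d,K_\varepsilon)$ by an elementary convex-combination estimate; this is more self-contained and yields the stronger conclusion $d_H(A_i,K_\varepsilon)<d_i$ for every $i$. The price is that when $\Int K_d=\emptyset$ this construction collapses, and you handle that case by a separate direct argument: covering $K_d$ by the closed sets $F_i=\{x\in K_d:|x\,A_i|=d_i\}$, extracting a relatively open patch $W\subset F_j$, and showing that the metric projection of any $x_1\in W$ onto $A_j$ is already far. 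This second case is constructive rather than by contradiction, and it does not appear in the paper at all.

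In short: the paper trades the case split for reliance on Theorem~\ref{cont}; your version is more elementary and gives, in the degenerate case $\Int K_d=\emptyset$, an explicit far point rather than a variational contradiction. One small remark: invoking the Baire category theorem for a \emph{finite} closed cover is overkill --- a finite union of closed sets with empty relative interior is nowhere dense, which already suffices.
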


\begin{proof}

Допустим противное, то есть что для всех $A_i$ и всех точек $a\in A_i$ выполнено $U_{d_i}(a)\cap K_d\neq \emptyset$. Но тогда $A_i \subset U_{d_i}(K_d)$ для любого $i.$ Значит, положительна величина:
$$\gamma_i = \bigl|A_i \, \partial B_{d_i}(K_d)\bigr|.$$ Следовательно, в силу конечного числа компактов в границе $A$ получаем 
$$\gamma = \min\limits_{i\in [1, n]} \gamma_i > 0.$$
Таким образом, для любого $i$ согласно лемме~\ref{incl} имеем $$A_i \subset B_{d_i-\gamma}(K_d).$$

По условию все $A_i$ выпуклы, значит, $K_d$ тоже выпуклый. Поэтому согласно теореме~\ref{cont} множество $B_{d_i}(A_i)\cap K_d$ меняется непрерывно при малых возмущениях $d_i$. Зафиксируем номер $i.$ Таким образом, для $0<\varepsilon<\gamma$ существует $\delta>0$ такое, что $$K_d = B_{d_i}(A_i)\cap K_d \subset B_{\varepsilon}\bigl(B_{d_i-\delta}(A_i)\cap K_d\bigr).$$ Обозначим $B_{d_i-\delta}(A_i)\cap K_d$ через $K.$ Таким образом, $K_d\subset B_{\varepsilon}(K).$ Значит, для любого $j$ согласно лемме~\ref{sum} верно $$A_j \subset B_{d_j-\gamma}(K_d) \subset B_{d_j-\gamma}\bigl(B_{\varepsilon}(K)\bigr) = B_{d_j-\gamma+\varepsilon}(K).$$ При этом $K\subset K_d\subset B_{d_j}(A_j)$ для всех $j$ и $K\subset B_{d_i-\delta}(A_i).$ Значит, $d_H(K, A_j)\le d_j$ и $d_H(K, A_i)\le \max(d_i-\delta, d_i-\gamma+\varepsilon) < d_i.$ Следовательно, $S(A, K) < S(A, K_d),$ получили противоречие. Теорема доказана.

\end{proof}

\begin{cor}
Пусть граница $A$ является выпуклой. Тогда для любого класса $\Sigma_d(A)$ существует граничный компакт $A_i,$ содержащий неплотную точку. 
\end{cor}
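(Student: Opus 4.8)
План доказательства прямолинеен: это следствие немедленно вытекает из теоремы~\ref{HP} в сочетании с замечанием~\ref{eq}. Сначала я бы применил теорему~\ref{HP} к выпуклой границе $A$: она даёт, что для любого класса $\Sigma_d(A)$ найдётся граничный компакт $A_i$, содержащий далёкую точку $a\in A_i$, то есть такую, что $U_{d_i}(a)\cap K_d=\emptyset$. Затем я бы сослался на замечание~\ref{eq}, в котором установлено, что всякая далёкая точка является неплотной: из $U_{d_i}(a)\cap K_d=\emptyset$ следует $B_{d_i}(a)\cap K_d\subset\partial B_{d_i}(a)$ при $d_i>0$ (и $B_{d_i}(a)\cap K_d=\{a\}$ при $d_i=0$), откуда $\Int\bigl(B_{d_i}(a)\cap K_d\bigr)=\emptyset$. Тем самым та же точка $a$ оказывается неплотной точкой компакта $A_i$, что и требуется.

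Существенных препятствий я здесь не вижу; всё сводится к двум ссылкам. Единственный момент, за которым стоит проследить, — это согласованность обозначений: далёкость и неплотность в обеих цитируемых формулировках понимаются относительно одного и того же вектора решения $d\in\Omega(A)$ и соответствующего максимального компакта Штейнера $K_d$, и это обеспечивается фиксацией класса $\Sigma_d(A)$ в условии следствия. Можно ещё отметить, что данное следствие стоит в одном ряду с теоремой~\ref{HP}: оно показывает, что более слабое (в силу замечания~\ref{eq}) свойство «содержать неплотную точку» тем более выполнено для некоторого граничного компакта каждого класса $\Sigma_d(A)$ выпуклой границы.
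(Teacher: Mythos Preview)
Ваше доказательство корректно и по существу совпадает с подходом статьи: сначала применяется теорема~\ref{HP}, дающая далёкую точку, затем делается переход от далёкой точки к неплотной. Единственное отличие в том, что статья ссылается на следствие~\ref{Cor_eq} (далёкие = неплотные при выпуклой границе и $\Int K_d\neq\emptyset$), тогда как вы используете замечание~\ref{eq} (далёкие $\subset$ неплотные без дополнительных условий); ваш выбор даже аккуратнее, поскольку здесь нужна лишь импликация «далёкая $\Rightarrow$ неплотная», и лишнее условие на $\Int K_d$ не требуется.
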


\begin{proof}
Согласно следствию~\ref{Cor_eq} в случае выпуклой границы далёкие точки совпадают с неплотными. Но в силу теоремы~\ref{HP} у выпуклой границы существует по крайней мере одна далёкая точка. Значит, существует по крайней мере одна неплотная точка.
\end{proof}

\begin{dfn}
Границу $A$, все элементы которой являются конечными множествами, назовём \textit{финитной}.
\end{dfn}

\begin{dfn}
Точку $a\in A_i$ назовём \textit{дискретной}, если $\# B_{d_i}(a)\cap K_d < \infty.$
\end{dfn}

\begin{ass}\label{MS_1}
Если $a\in A_i$ дискретна, то $B_{d_i}(a)\cap K_d \subset \partial K_d.$
\end{ass}

Доказательство утверждения~\ref{MS_1} дословно повторяет соответствующее доказательство из~\cite{M} для случая $X = \mathbb{R}^m$ и дискретной границы $A$ с попарно непересекающимися компактами.

\begin{thm}\label{MS_2}
Пусть $A\subset \mathcal{H}(X)$ --- финитная граница и норма пространства $X$ строго выпукла. Тогда для любого класса $\Sigma_d(A)$ по крайней мере в одном граничном компакте $A_i$ найдётся дискретная точка.
\end{thm}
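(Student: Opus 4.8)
The plan is to argue by contradiction: assume no boundary compact $A_i$ contains a discrete point, and build a competitor $K$ with $S(A,K)<S_A$. The starting point is the combinatorial decomposition of $K_d$. By Lemma~\ref{lm_2} and distributivity, $K_d=\bigcap_{i=1}^n B_{d_i}(A_i)=\bigcup_\sigma C_\sigma$, where $\sigma$ runs over the \emph{finite} set $A_1\times\dots\times A_n$ and $C_\sigma=\bigcap_{i=1}^n B_{d_i}(\sigma_i)$. Since the norm is strictly convex, Lemma~\ref{TL} applies to each $C_\sigma$: a nonempty $C_\sigma$ is either a single point or has nonempty interior (call it \emph{fat}). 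Applying the same lemma to $B_{d_i}(a)\cap K_d=\bigcup_\sigma\bigl(B_{d_i}(a)\cap C_\sigma\bigr)$ shows that $B_{d_i}(a)\cap K_d$ is finite if and only if each of these finitely many intersections is empty or a point. Hence the contradiction hypothesis forces every $B_{d_i}(a)\cap K_d$ to contain a fat piece; in particular all $d_i>0$, $K_d$ is infinite, and $K_d$ has at least one fat piece. (When $n=1$, or more generally when $\operatorname{Int}K_d=\emptyset$, $K_d$ is finite and every point of every $A_i$ is already discrete, so we may assume $n\ge2$ below.)

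Next I would pass to $\tilde K:=\bigcup_{\sigma\ \text{fat}}C_\sigma\subseteq K_d$. It is a nonempty compact (finite union of convex compacts), and $K_d\setminus\tilde K$ is finite since it is contained in the finitely many one-point pieces; I do \emph{not} claim $\tilde K$ is a Steiner compact, only that it is this convenient subset of $K_d$. The point of the reduction is that $\tilde K$ is a finite union of convex bodies, so the shrinking argument from the proof of Theorem~\ref{HP} can be run on it piecewise. Suppose first that $U_{d_i}(a)\cap\tilde K\neq\emptyset$ for every $i$ and every $a\in A_i$. Since each $A_i$ is \emph{finite}, $\gamma:=\min_i\min_{a\in A_i}\bigl(d_i-|a\,\tilde K|\bigr)>0$, so $A_i\subseteq B_{d_i-\gamma}(\tilde K)$ for all $i$; fix $0<\varepsilon<\gamma$. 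Shrink the first radius: put $K_\delta:=B_{d_1-\delta}(A_1)\cap\tilde K$. For each fat piece write $C_\sigma=B_{d_1}(\sigma_1)\cap D_\sigma$ with $D_\sigma:=\bigcap_{j\ge2}B_{d_j}(\sigma_j)$ a nonempty convex compact; then $|\sigma_1\,D_\sigma|<d_1$, for otherwise $C_\sigma\subseteq\partial B_{d_1}(\sigma_1)$ would be a convex subset of a sphere, hence a single point by strict convexity — contradicting fatness. Thus Theorem~\ref{cont} gives continuity of $r\mapsto B_r(\sigma_1)\cap D_\sigma$ at $r=d_1$, so for $\delta$ small (taken as the minimum over the finitely many fat $\sigma$) we get $C_\sigma\subseteq B_\varepsilon\bigl(B_{d_1-\delta}(\sigma_1)\cap C_\sigma\bigr)\subseteq B_\varepsilon(K_\delta)$, whence $\tilde K\subseteq B_\varepsilon(K_\delta)$.

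With this, $K_\delta\neq\emptyset$ and $K_\delta\subseteq\tilde K\subseteq K_d\subseteq B_{d_j}(A_j)$ for all $j$, while $A_j\subseteq B_{d_j-\gamma}(\tilde K)\subseteq B_{d_j-\gamma+\varepsilon}(K_\delta)$ by Lemma~\ref{sum}, and additionally $K_\delta\subseteq B_{d_1-\delta}(A_1)$. Therefore $d_H(A_j,K_\delta)\le d_j$ for $j\neq1$ and $d_H(A_1,K_\delta)\le\max(d_1-\delta,d_1-\gamma+\varepsilon)<d_1$, so $S(A,K_\delta)<S_A$, a contradiction. Hence there exist $i$ and $a\in A_i$ with $U_{d_i}(a)\cap\tilde K=\emptyset$. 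For this point, $U_{d_i}(a)\cap K_d\subseteq K_d\setminus\tilde K$ is finite; if some $B_{d_i}(a)\cap C_\sigma$ were fat, then $U_{d_i}(a)\cap K_d$ would contain the nonempty open set $\operatorname{Int}\bigl(B_{d_i}(a)\cap C_\sigma\bigr)\subseteq\operatorname{Int}B_{d_i}(a)\cap C_\sigma$, hence be infinite — impossible. So every $B_{d_i}(a)\cap C_\sigma$ is empty or a single point, $B_{d_i}(a)\cap K_d$ is finite, and $a$ is a discrete point of $A_i$, contradicting the hypothesis. This proves the theorem.

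The main obstacle is the non-convexity of $K_d$ in the finitary setting: Theorem~\ref{cont} and Lemma~\ref{incl} cannot be applied to $K_d$ itself, and an isolated one-point piece $C_\sigma=\{q\}$ with $|q\,A_1|=d_1$ would be annihilated by any shrinking of $d_1$ while remaining far from everything that survives. The device that defeats this is the passage to $\tilde K$ (discarding the finitely many one-point pieces, which carry no interior and, under the hypothesis, are harmless), combined with the two uses of strict convexity via Lemma~\ref{TL}: a fat piece cannot be trapped inside a sphere (so the piecewise continuity argument works), and a ball $B_{d_i}(a)$ whose open version misses all fat pieces meets $K_d$ in only finitely many points.
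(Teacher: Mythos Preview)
Your argument is correct. The paper itself does not supply a proof of this theorem; it simply states that the proof ``is exactly the same'' as in the cited work~\cite{M} for $X=\mathbb{R}^m$ with pairwise disjoint boundary compacts. So there is no detailed proof in the present paper to compare line-by-line against.

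That said, your approach is precisely the natural adaptation of the convex-boundary shrinking argument (Theorem~\ref{HP}) to the finitary setting, executed with the tools developed here. The two key moves are sound: (i) passing to the finite union $\tilde K$ of the ``fat'' pieces $C_\sigma$ so that Theorem~\ref{cont} can be invoked piecewise on the convex bodies $B_{d_1}(\sigma_1)\cap D_\sigma$, and (ii) the endgame observation that if some $U_{d_i}(a)$ misses $\tilde K$ then, by Lemma~\ref{TL} applied to each $B_{d_i}(a)\cap C_\sigma$, the full intersection $B_{d_i}(a)\cap K_d$ is finite, yielding a discrete point. The verification that $|\sigma_1\,D_\sigma|<d_1$ for fat $C_\sigma$ (a convex subset of a strictly convex sphere is at most a point) is exactly what is needed to place $d_1$ in the interior of the domain of continuity in Theorem~\ref{cont}. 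The handling of the degenerate case $n=1$ (where $d_1=0$ and $K_d=A_1$ is finite) is also fine. This is almost certainly the same argument as in~\cite{M}; your write-up has the merit of being self-contained within the toolkit of the present paper.
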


Доказательство теоремы~\ref{MS_2} в точности такое же, как и в~\cite{M} для случая $X = \mathbb{R}^m$ и попарно непересекающимися граничными компактами. 

\begin{ass}\label{eq_d}
В случае финитной границы $A\subset \mathcal{H}(X)$ и пространства $X$ со строго выпуклой нормой неплотные точки совпадают с дискретными.
\end{ass}

\begin{proof}

Пусть $D(A)$ --- это множество всех дискретных точек границы $A$, а $L(A)$ --- это множество всех неплотных точек для $A.$ Согласно утверждению~\ref{MS_1} имеем $D(A)\subset L(A).$ Поэтому нужно доказать, что $L(A)\subset D(A)$ в случае финитной границы $A$ и строго выпуклой нормы $X$.

Пусть $a\in A_i$ --- неплотная точка, то есть $\Int \bigl(B_{d_i}(a)\cap K_d\bigr) = \emptyset.$ Покажем, что тогда $a\in D(A)$, то есть $\# B_{d_i}(a)\cap K_d < \infty.$ 

Имеем $$K_d = \bigcup\limits_{j_1,\ldots,j_n} B_{d_1}(a_{j_1}^1)\cap \ldots \cap B_{d_n}(a_{j_n}^n).$$ Так как по условию $\Int \bigl(B_{d_i}(a)\cap K_d\bigr) = \emptyset,$ то $$\Int \bigl(B_{d_i}(a)\cap B_{d_1}(a_{j_1}^1)\cap \ldots \cap B_{d_n}(a_{j_n}^n)\bigr) = \emptyset$$ для всех наборов $j_1,\ldots,j_n.$ Значит, согласно лемме~\ref{TL} каждое множество $$B_{d_i}(a)\cap B_{d_1}(a_{j_1}^1)\cap \ldots \cap B_{d_n}(a_{j_n}^n)$$ одноточечно. В силу финитности границы $A$ таких множеств конечное число. Поэтому множество $B_{d_i}(a)\cap K_d$ состоит из конечного числа точек. Следовательно, точка $a$ является дискретной точкой. Отсюда в силу произвольности $a$ получаем $L(A)\subset D(A).$ Значит, $L(A) = D(A).$ Утверждение доказано.

\end{proof}

\begin{cor}
Пусть граница $A$ является финитной, а пространство $X$ имеет строго выпуклую норму. Тогда для любого класса $\Sigma_d(A)$ существует граничный компакт $A_i,$ содержащий неплотную точку. 
\end{cor}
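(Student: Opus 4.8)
The plan is to deduce this corollary directly from the two preceding results on finitary boundaries over strictly convex spaces, exactly as the corollary following Theorem~\ref{HP} was deduced from Theorem~\ref{HP} and Corollary~\ref{Cor_eq}. First I would fix an arbitrary class $\Sigma_d(A)$ and apply Theorem~\ref{MS_2}: since $A$ is finitary and the norm of $X$ is strictly convex, in at least one boundary compact $A_i$ there is a discrete point $a\in A_i$, i.e. a point with $\#\bigl(B_{d_i}(a)\cap K_d\bigr)<\infty$.

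Next I would invoke Proposition~\ref{eq_d}, which states that under precisely these hypotheses (finitary boundary, strictly convex norm) the set of discrete points and the set of non-dense points coincide. Hence the discrete point $a$ produced by Theorem~\ref{MS_2} is automatically non-dense, that is $\Int\bigl(B_{d_i}(a)\cap K_d\bigr)=\emptyset$. This yields a boundary compact $A_i$ containing a non-dense point, which is exactly the assertion of the corollary.

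There is essentially no obstacle: the statement is a formal splicing of Theorem~\ref{MS_2} with Proposition~\ref{eq_d}, the only point to check being that the class $\Sigma_d(A)$ is kept fixed throughout the argument (it is, since both cited results are phrased ``for any class $\Sigma_d(A)$'', so we simply choose one and use the same one in both applications). If one preferred to avoid quoting Proposition~\ref{eq_d} as a whole, it would suffice to note that the implication ``discrete $\Rightarrow$ non-dense'' already follows from Proposition~\ref{MS_1} together with Lemma~\ref{TL}, so only the existence half coming from Theorem~\ref{MS_2} is genuinely needed; but referring to the proposition as a black box gives the shortest and cleanest write-up.
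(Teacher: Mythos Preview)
Your argument is correct and matches the paper's own proof almost verbatim: apply Theorem~\ref{MS_2} to obtain a discrete point in some $A_i$, then use Proposition~\ref{eq_d} to conclude that this point is non-dense. One small quibble with your closing remark: the implication ``discrete $\Rightarrow$ non-dense'' is immediate (a finite subset of a finite-dimensional normed space has empty interior) and does not require Proposition~\ref{MS_1} or Lemma~\ref{TL}; but this does not affect the main line of the proof.
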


\begin{proof}

Согласно утверждению~\ref{eq_d} в случае финитной границы и пространства $X$ со строго выпуклой нормой дискретные точки совпадают с неплотными. Но в силу теоремы~\ref{MS_2} у финитной границы существует по крайней мере одна дискретная точка. Значит, существует по крайней мере одна неплотная точка.

\end{proof}

\subsection{О взаимосвязи выпуклой границы с максимальным компактом Штейнера}\label{Corresp}

Введём ряд обозначений:
\begin{itemize}
	\item $F_i^A$ --- множество всех далёких точек компакта $A_i\in A$ и $F^A = \bigcup\limits_{i} F_i^A;$
	\item $\HP_d(p, F_i^A) = B_{d_i}(p)\cap K_d,$ где $p\in F_i^A;$
	\item $\HP_d(F_i^A) = \bigcup\limits_{p\in F_i^A} \HP_d(p, F_i^A);$
	\item $\HP_d(F^A) = \bigcup\limits_{i} \HP_d(F_i^A);$
\end{itemize}

Везде, где будет ясно, о какой границе $A$ идёт речь, верхний индекс $A$ будет опускаться, то есть $F_i^A$ и $F^A$ будут заменяться на $F_i$ и $F$ соответственно.

Леммы~\ref{zero_dist} и~\ref{subset_K_d} сформулированы, чтобы немного подробнее раскрыть геометрию множества $\HP_d(F_i),$ которое далее будет часто использоваться.

\begin{lem}\label{zero_dist}
Если $d_i=0,$ то $\HP_d(F_i) = A_i.$
\end{lem}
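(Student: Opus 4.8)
The plan is to unwind the definitions in the degenerate case $d_i = 0$, where balls collapse to points. First I would observe that when $d_i = 0$ the open ball $U_{d_i}(a) = U_0(a)$ is empty for every $a$, so the defining condition for a far point, $U_{d_i}(a)\cap K_d = \emptyset$, holds vacuously for all $a \in A_i$. Hence $F_i = A_i$; that is, every point of $A_i$ is far.

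Next I would note that $d_i = d_H(A_i, K_d) = 0$ forces $A_i \subset B_0(K_d)$, and since $K_d$ is a compact (in particular closed) set, $B_0(K_d) = K_d$, so $A_i \subset K_d$. This is the only place where the hypothesis $d_i = d_H(A_i, K_d)$ (rather than just $d_i \geq 0$) is used, and it is what makes the intersection with $K_d$ nontrivial.

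Then, for any $p \in F_i = A_i$, the closed ball $B_{d_i}(p) = B_0(p) = \{p\}$, so $\HP_d(p, F_i) = B_{d_i}(p)\cap K_d = \{p\}\cap K_d = \{p\}$, the last equality because $p \in A_i \subset K_d$. Taking the union over $p$ gives $\HP_d(F_i) = \bigcup_{p \in F_i}\{p\} = \bigcup_{p \in A_i}\{p\} = A_i$, which is the claim.

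There is no real obstacle here; the only care needed is the bookkeeping of the degenerate balls $U_0(\cdot) = \emptyset$ and $B_0(\cdot) = \{\cdot\}$ together with Remark~\ref{rk:2}-style conventions, plus the use of closedness of $K_d$ in step two.
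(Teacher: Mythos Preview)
Your proof is correct and follows essentially the same route as the paper: both observe that $U_0(a)=\emptyset$ makes every $a\in A_i$ far (so $F_i=A_i$), and then use $d_H(A_i,K_d)=0$ to reduce $B_0(F_i)\cap K_d$ to $A_i$. The only cosmetic difference is that the paper invokes the full equality $A_i=K_d$ (since $d_H$ is a genuine metric on $\mathcal{H}(X)$), whereas you use only the inclusion $A_i\subset K_d$, which is all that is actually needed for the computation.
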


\begin{proof}

Так как $d_i=0,$ то для любой точки $a\in A_i$ верно $U_{d_i}(a) = \emptyset$, и значит, $U_{d_i}(a)\cap K_d = \emptyset.$ Следовательно, $F_i = A_i.$ Ввиду того, что $0=d_i=d_H(A_i,K_d),$ имеем $A_i = K_d.$ Поэтому $\HP_d(F_i) = B_{d_i}(F_i)\cap K_d = A_i.$

\end{proof}

\begin{lem}\label{subset_K_d}
Если $A_i\subset K_d$ и $A_i\neq K_d$, то $\HP_d(F_i) = \emptyset.$
\end{lem}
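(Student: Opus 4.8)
The plan is to prove the stronger statement that, under these hypotheses, $A_i$ contains no far points at all, i.e. $F_i = \emptyset$. Once this is established, the conclusion is immediate: $\HP_d(F_i) = \bigcup_{p \in F_i} \HP_d(p, F_i)$ is a union indexed by the empty set, hence $\HP_d(F_i) = \emptyset$.

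The first step is to observe that $d_i > 0$. Since $d_H$ is a metric on $\mathcal{H}(X)$, the equality $d_i = d_H(A_i, K_d) = 0$ would force $A_i = K_d$, contradicting the hypothesis $A_i \neq K_d$; so $d_i > 0$. (One could argue more directly: choosing a point $x \in K_d \setminus A_i$, compactness of the nonempty set $A_i$ gives $|x\,A_i| > 0$, and then $K_d \not\subset B_r(A_i)$ for every $r < |x\,A_i|$, so $d_i \geq |x\,A_i| > 0$.)

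The second step is to check that every $a \in A_i$ fails to be far. Fix $a \in A_i$. By hypothesis $A_i \subset K_d$, so $a \in K_d$; and since $d_i > 0$ we have $|aa| = 0 < d_i$, i.e. $a \in U_{d_i}(a)$. Hence $a \in U_{d_i}(a) \cap K_d$, so this intersection is nonempty, and by Definition~\ref{far_dfn} the point $a$ is not far. As $a \in A_i$ was arbitrary, $F_i = \emptyset$, which finishes the argument.

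I do not anticipate any genuine difficulty: the whole content is the remark that a point of $A_i$ already lies in $K_d$ and, because $d_i > 0$, lies in its own open $d_i$-ball, so its open neighbourhood of radius $d_i$ meets $K_d$. The only thing that formally needs to be ruled out is the degenerate case $d_i = 0$, which is handled by the first step.
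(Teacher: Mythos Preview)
Your proof is correct and follows essentially the same approach as the paper: both first deduce $d_i>0$ from $A_i\subset K_d$ with $A_i\neq K_d$, then observe that each $a\in A_i$ lies in $U_{d_i}(a)\cap K_d$, so $F_i=\emptyset$ and hence $\HP_d(F_i)=\emptyset$. The only cosmetic difference is that the paper concludes via $\HP_d(F_i)=B_{d_i}(F_i)\cap K_d$ together with $B_r(\emptyset)=\emptyset$, while you use the union description of $\HP_d(F_i)$ directly.
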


\begin{proof}

Так как $A_i\subset K_d$ и $A_i\neq K_d,$ то $d_i > 0.$ Значит, для любой точки $a\in A_i$ справедливо $U_{d_i}(a)\neq \emptyset.$ При этом $a\in A_i\subset K_d.$ Следовательно, $U_{d_i}(a)\cap K_d\neq \emptyset$ для любой точки $a\in A_i.$ Поэтому $F_i = \emptyset.$ Отсюда согласно замечанию~\ref{rk:2} верно $\HP_d(F_i) = B_{d_i}(F_i)\cap K_d = \emptyset.$

\end{proof}

Главным результатом, содержащимся в данном подразделе, является теорема о взаимосвязи выпуклой границы с $K_d$~\ref{pHP}. Однако прежде чем переходить к её формулировке и доказательству, требуется ввести некоторые вспомогательные леммы и замечания, касающиеся множества $\HP_d(F_i).$

Заметим, что $\HP_d(F_i)$ можно определить следующим образом. Ввиду того, что $F_i$ --- это множество всех далёких точек в $A_i,$ имеем $F_i = A_i\setminus U_{d_i}(K_d).$ Тогда по лемме~\ref{lm_2} в силу компактности $F_i$ верно $$\HP_d(F_i) = \bigcup\limits_{p\in F_i} \HP_d(p, F_i) = \bigcup\limits_{p\in F_i}\bigl(B_{d_i}(p)\cap K_d\bigr) = \bigl(\bigcup\limits_{p\in F_i}B_{d_i}(p)\bigr)\cap K_d = B_{d_i}(F_i)\cap K_d.$$

\begin{rk}\label{lm_0}
Множество $\HP_d(F_i)$ --- компакт как пересечение двух компактов.
\end{rk}

\begin{lem}\label{empt}
Множество $\HP_d(F_i)$ непусто тогда и только тогда, когда $F_i$ непусто.
\end{lem}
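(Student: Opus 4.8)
The plan is to derive both implications directly from the description
$\HP_d(F_i)=B_{d_i}(F_i)\cap K_d$
that was obtained just before the statement (via Lemma~\ref{lm_2} and compactness of $F_i$), together with the fact that $d_i=d_H(A_i,K_d)$.

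\emph{Only if.} I would argue by contraposition. If $F_i=\emptyset$, then by Remark~\ref{rk:2} we have $B_{d_i}(F_i)=B_{d_i}(\emptyset)=\emptyset$, hence $\HP_d(F_i)=B_{d_i}(\emptyset)\cap K_d=\emptyset$; equivalently, $\HP_d(F_i)=\bigcup_{p\in F_i}\HP_d(p,F_i)$ is an empty union. Thus $\HP_d(F_i)\neq\emptyset$ forces $F_i\neq\emptyset$.

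\emph{If.} Suppose $F_i\neq\emptyset$ and pick any $p\in F_i\subset A_i$. Since $d_i=d_H(A_i,K_d)$, we have $A_i\subset B_{d_i}(K_d)$, so in particular $|p\,K_d|\le d_i$. As $K_d$ is a nonempty compact, the metric projection $P_{K_d}(p)$ is nonempty (Remark after Definition~\ref{dfn_3}); choosing $q\in P_{K_d}(p)$ gives $|pq|=|p\,K_d|\le d_i$, i.e.\ $q\in B_{d_i}(p)$. Hence $q\in B_{d_i}(p)\cap K_d=\HP_d(p,F_i)\subset\HP_d(F_i)$, so $\HP_d(F_i)\neq\emptyset$.

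I do not expect any genuine obstacle here; the only point deserving a remark is the degenerate case $d_i=0$, where $F_i=A_i$ and $A_i=K_d$, so that $\HP_d(F_i)=A_i\neq\emptyset$ (this is precisely Lemma~\ref{zero_dist}), which is consistent with the projection argument above — in that case it simply forces $q=p\in K_d$.
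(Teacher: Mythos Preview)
Your proof is correct and follows essentially the same approach as the paper's: both use the decomposition $\HP_d(F_i)=\bigcup_{p\in F_i}\bigl(B_{d_i}(p)\cap K_d\bigr)$ and the fact that $B_{d_i}(a)\cap K_d\neq\emptyset$ for every $a\in A_i$ (from $A_i\subset B_{d_i}(K_d)$). The paper simply asserts this nonemptiness, while you spell it out via the metric projection; the content is the same.
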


\begin{proof}

Согласно лемме~\ref{lm_2} $$\HP_d(F_i)=B_{d_i}(F_i)\cap K_d = \bigcup\limits_{f\in F_i\subset A_i}B_{d_i}(f)\cap K_d.$$ Напомним, что для любой точки $a\in A_i$ верно $B_{d_i}(a)\cap K_d\neq\emptyset.$ Следовательно, $\bigcup\limits_{f\in F_i\subset A_i}B_{d_i}(f)\cap K_d\neq \emptyset$ тогда и только тогда, когда $F_i\neq \emptyset.$ Лемма доказана.

\end{proof}

Пусть граница $A$ выпукла, $\HP_d(F_i)\neq \emptyset$ и для некоторого $j\neq i$ верно $\HP_d(F_i)\subset U_{d_j}(A_j).$ Положим 
\begin{equation}\label{HPF_dist}
\gamma=\bigl|\HP_d(F_i) \,\, \partial B_{d_j}(A_j)\bigr|.
\end{equation}
Тогда справедлива следующая лемма.

\begin{lem}\label{lm_1}
Для любого $0<\varepsilon<\gamma$ найдётся $\Delta>0$ такое, что для любого $0\le\delta\le \Delta$ при $$H = B_{d_i+\delta}(F_i)\cap K_d$$ верно $$\bigl|H \, \partial B_{d_j}(A_j)\bigr| \ge \gamma - \varepsilon > 0.$$ 
\end{lem}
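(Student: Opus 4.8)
План состоит в том, чтобы вывести лемму из следующего равномерного включения: для любого $\varepsilon>0$ найдётся $\Delta>0$ такое, что $B_{d_i+\delta}(F_i)\cap K_d\subset B_{\varepsilon}\bigl(\HP_d(F_i)\bigr)$ для всех $0\le\delta\le\Delta$. Предположив это, зафиксируем такое $\Delta$, возьмём любое $\delta\in[0,\Delta]$ и обозначим $H=B_{d_i+\delta}(F_i)\cap K_d$. Для произвольной точки $h\in H$ включение даёт точку $g\in\HP_d(F_i)$ (а $\HP_d(F_i)$ --- непустой компакт согласно замечанию~\ref{lm_0} и лемме~\ref{empt}, применимой ввиду $\HP_d(F_i)\neq\emptyset$) с $\|h-g\|\le\varepsilon$. Так как функция $x\mapsto\bigl|x\,\partial B_{d_j}(A_j)\bigr|$ является $1$-липшицевой и $\bigl|g\,\partial B_{d_j}(A_j)\bigr|\ge\bigl|\HP_d(F_i)\,\partial B_{d_j}(A_j)\bigr|=\gamma$, получаем $\bigl|h\,\partial B_{d_j}(A_j)\bigr|\ge\gamma-\varepsilon$. Поскольку $h\in H$ произвольна и $H\neq\emptyset$ (множество $H$ содержит $\HP_d(F_i)$, так как $d_i\le d_i+\delta$), отсюда следует $\bigl|H\,\partial B_{d_j}(A_j)\bigr|\ge\gamma-\varepsilon$, а $\gamma-\varepsilon>0$ ввиду $\varepsilon<\gamma$.

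Остаётся доказать равномерное включение, и здесь я бы рассуждал от противного, пользуясь компактностью $K_d$. Если бы включение нарушалось для некоторого $\varepsilon>0$, то для каждого $n$ можно было бы выбрать $\delta_n\in[0,1/n]$ и точку $h_n\in B_{d_i+\delta_n}(F_i)\cap K_d$ с $\bigl|h_n\,\HP_d(F_i)\bigr|>\varepsilon$. Ввиду компактности $K_d$, переходя к подпоследовательности, имеем $h_n\to h\in K_d$. Из $|h_n\,F_i|\le d_i+\delta_n$ и непрерывности функции $x\mapsto|x\,F_i|$ (утверждение~\ref{Mendelson}, применимое ввиду $F_i\neq\emptyset$) при переходе к пределу получаем $|h\,F_i|\le d_i$, то есть $h\in B_{d_i}(F_i)\cap K_d=\HP_d(F_i)$. Но тогда $\bigl|h_n\,\HP_d(F_i)\bigr|\le\|h_n-h\|\to 0$ --- противоречие с $\bigl|h_n\,\HP_d(F_i)\bigr|>\varepsilon$. Значит, требуемое $\Delta$ существует.

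Главным местом, требующим аккуратности, я считаю равномерность по $\delta$: рассуждение от противного нужно организовать так, чтобы одно $\Delta$ годилось сразу для всего отрезка $[0,\Delta]$, поэтому отрицание формулируется с $\delta_n$, пробегающими сжимающиеся отрезки, а не при фиксированном значении $\delta$. Остальное рутинно: $F_i\neq\emptyset$ и $\HP_d(F_i)\neq\emptyset$ равносильны по лемме~\ref{empt}, равенство $\HP_d(F_i)=B_{d_i}(F_i)\cap K_d$ --- это тождество, отмеченное непосредственно перед замечанием~\ref{lm_0}, а положительность $\gamma$ (необходимая, чтобы условие $0<\varepsilon<\gamma$ было содержательным) следует из того, что $\HP_d(F_i)$ --- компакт, не пересекающийся с замкнутым множеством $\partial B_{d_j}(A_j)$; непересечение имеет место, так как $\HP_d(F_i)\subset U_{d_j}(A_j)$, тогда как точки из $\partial B_{d_j}(A_j)$ удалены от $A_j$ ровно на $d_j$ по лемме~\ref{bound}. Отметим, что выпуклость границы в этой лемме фактически не используется --- она входит лишь в общие предположения раздела. Вырожденный случай $\delta=0$ тривиален, так как тогда $H=\HP_d(F_i)$ и требуемое неравенство выполнено как равенство.
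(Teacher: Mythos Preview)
Your argument is correct. The core inclusion $B_{d_i+\delta}(F_i)\cap K_d\subset B_{\varepsilon}\bigl(\HP_d(F_i)\bigr)$ for small $\delta$, proved by extracting a convergent subsequence in the compact $K_d$ and passing to the limit in $|h_n\,F_i|\le d_i+\delta_n$, is clean, and the reduction to the distance estimate via the $1$-Lipschitz property of $x\mapsto|x\,\partial B_{d_j}(A_j)|$ is routine.

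Your route differs from the paper's. The paper works \emph{pointwise}: for each $p\in F_i$ it invokes Theorem~\ref{cont} (continuity of $r\mapsto B_r(p)\cap K_d$, valid because $K_d$ is convex under the standing hypothesis of a convex boundary) to get $B_{d_i+\delta}(p)\cap K_d\subset U_{\varepsilon}\bigl(\HP_d(p,F_i)\bigr)$, then applies Lemma~\ref{aux} to translate this into a distance lower bound for $H(p)$, and finally takes the union over $p\in F_i$. Your sequential compactness argument bypasses both Theorem~\ref{cont} and Lemma~\ref{aux}, and indeed does not use convexity of $K_d$ at all --- which is why your closing remark that convexity is not actually needed is justified. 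A further advantage of your formulation is that the uniformity of $\Delta$ over all $\delta\in[0,\Delta]$ is built into the contradiction setup; in the paper's proof the $\Delta$ produced by Theorem~\ref{cont} a priori depends on the point $p\in F_i$, and the passage to a single $\Delta$ valid for every $p$ is left implicit.
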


\begin{proof}

Пусть $p\in F_i$, то есть $U_{d_i}(p)\cap K_d = \emptyset.$ По условию все граничные компакты выпуклы, значит, $K_d$ тоже выпуклый. Поэтому согласно теореме~\ref{cont} множество $\HP_d(p,F_i) = B_{d_i}(p)\cap K_d$ меняется непрерывно при увеличении $d_i.$ 

Пусть $0<\varepsilon<\gamma.$ Тогда для $\varepsilon$ найдётся такое $\Delta>0,$ что для любого $0\le\delta\le \Delta$ верно $$H(p) := B_{d_i+\delta}(p)\cap K_d\subset B_{d_i+\Delta}(p)\cap K_d\subset U_{\varepsilon}\bigl(B_{d_i}(p)\cap K_d\bigr) = U_{\varepsilon}\bigl(\HP_d(p,F_i)\bigr).$$ Заметим, что $\HP_d(p,F_i)\subset \HP_d(F_i).$ Напомним, что согласно~(\ref{HPF_dist}) имеем $\bigl|\HP_d(F_i) \,\, \partial B_{d_j}(A_j)\bigr| = \gamma.$ Поэтому $\bigl|\HP_d(p,F_i) \,\, \partial B_{d_j}(A_j)\bigr| =: \gamma' \ge \gamma.$ Но тогда по лемме~\ref{aux} $$\Bigl|U_{\varepsilon}\bigl(\HP_d(p,F_i)\bigr)\,\, \partial B_{d_j}(A_j)\Bigr| = \gamma' - \varepsilon \ge \gamma - \varepsilon.$$ Значит, так как $H(p) \subset U_{\varepsilon}\bigl(\HP_d(p,F_i)\bigr),$ то $$\bigl|H(p) \, \partial B_{d_j}(A_j)\bigr| \ge \gamma - \varepsilon  > 0.$$ Отсюда $$\inf\limits_{p\in F_i} \bigl|H(p) \, \partial B_{d_j}(A_j)\bigr|\ge \gamma - \varepsilon  > 0.$$ В силу компактности $F_i,$ по лемме~\ref{lm_2} имеем равенство $$\bigcup\limits_{p\,\in F_i} H(p) = \bigcup\limits_{p\,\in F_i} \bigl(B_{d_i + \delta}(p)\cap K_d\bigr) = \bigl(\bigcup\limits_{p\,\in F_i} B_{d_i + \delta}(p)\bigr)\cap K_d = B_{d_i + \delta}(F_i)\cap K_d = H.$$ Следовательно, $\bigl|H \, \partial B_{d_j}(A_j)\bigr| \ge \gamma - \varepsilon > 0.$ Лемма доказана.

\end{proof}

\begin{thm}[О взаимосвязи выпуклой границы с $K_d$]\label{pHP}
Пусть граница $A$ выпукла и все $d_i$ положительны. Тогда для любого номера $i$ существует точка $p\in \HP_d(F)$ такая, что $p\in \HP_d(F_i)$ или $p\in \partial B_{d_i}(A_i)$.
\end{thm}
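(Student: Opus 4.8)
The plan is to argue by contradiction. Suppose that for some index $i$ the set $\HP_d(F)$ is disjoint both from $\HP_d(F_i)$ and from $\partial B_{d_i}(A_i)$. Since $\HP_d(F_i)\subset\HP_d(F)$, the first of these forces $\HP_d(F_i)=\emptyset$, hence $F_i=\emptyset$ by лемма~\ref{empt}; equivalently no point of $A_i$ is far, so $A_i\subset U_{d_i}(K_d)$ and $\gamma_i:=\bigl|A_i\,\partial B_{d_i}(K_d)\bigr|>0$ (both are nonempty compacts). Also $\HP_d(F)$ is a nonempty compact: it is a finite union of the compacts $\HP_d(F_k)$ (замечание~\ref{lm_0}), and by теорема~\ref{HP} some $F_k\ne\emptyset$, whence $\HP_d(F_k)\ne\emptyset$ by лемма~\ref{empt}. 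Consequently $\gamma:=\bigl|\HP_d(F)\,\partial B_{d_i}(A_i)\bigr|>0$; moreover $\HP_d(F)\subset B_{d_i}(A_i)$ and $\HP_d(F)\cap\partial B_{d_i}(A_i)=\emptyset$ give $\HP_d(F)\subset U_{d_i}(A_i)$. Since the boundary is convex, $K_d$ and each $A_i$ are convex compacts, so лемма~\ref{incl} (applied to $K_d$ and to $A_i$) yields $A_i\subset B_{d_i-\mu}(K_d)$ with $\mu:=\min\{\gamma_i,d_i\}>0$ and $\HP_d(F)\subset B_{d_i-\delta}(A_i)$ for every $0<\delta\le\min\{\gamma,d_i\}$. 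The strategy is then to trim $K_d$ towards $A_i$: for small $\delta>0$ put $K'_\delta:=K_d\cap B_{d_i-\delta}(A_i)$ and derive $S(A,K'_\delta)<S_A$, contradicting $K_d\in\Sigma(A)$.

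The heart of the argument is the following claim: there is $\delta_*>0$ such that $A_j\subset B_{d_j}(K'_\delta)$ for every $j\ne i$ and every $0<\delta\le\delta_*$. Suppose not: then there exist $\delta_k\downarrow0$, an index $j\ne i$ (fixed after passing to a subsequence), and $a_k\in A_j$ with $B_{d_j}(a_k)\cap K'_{\delta_k}=\emptyset$, i.e. $B_{d_j}(a_k)\cap K_d\subset K_d\setminus B_{d_i-\delta_k}(A_i)$. Pick $x_k\in B_{d_j}(a_k)\cap K_d$ (nonempty since $a_k\in A_j\subset B_{d_j}(K_d)$); then $|a_kx_k|\le d_j$, $x_k\in K_d$, $|x_kA_i|>d_i-\delta_k$. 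Passing to a further subsequence, $a_k\to a_*\in A_j$ and $x_k\to x_*\in K_d$, so $|a_*x_*|\le d_j$ and, since $x_*\in K_d\subset B_{d_i}(A_i)$ while $|x_*A_i|\ge d_i$ in the limit (утверждение~\ref{Mendelson}), $x_*\in B_{d_j}(a_*)\cap K_d\cap\partial B_{d_i}(A_i)$ — in particular this set is nonempty. If $a_*$ were far, then $a_*\in F_j$ and $B_{d_j}(a_*)\cap K_d=\HP_d(a_*,F_j)\subset\HP_d(F)$, which is disjoint from $\partial B_{d_i}(A_i)$: contradiction. Hence $U_{d_j}(a_*)\cap K_d\ne\emptyset$. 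Furthermore, for any $y\in U_{d_j}(a_*)\cap K_d$ we have $|a_ky|<d_j$ for all large $k$, so $y\in B_{d_j}(a_k)\cap K_d\subset K_d\setminus B_{d_i-\delta_k}(A_i)$, whence $|yA_i|>d_i-\delta_k$ and therefore $|yA_i|=d_i$ (using $y\in K_d\subset B_{d_i}(A_i)$). Thus the function $g(x):=|xA_i|$, convex because $A_i$ is convex, satisfies $g\le d_i$ on the convex set $K_d$ and $g\equiv d_i$ on the nonempty set $U_{d_j}(a_*)\cap K_d$, which is relatively open in $K_d$. Restricting $g$ to a segment joining an arbitrary point of $K_d$ to a point of $U_{d_j}(a_*)\cap K_d$, and using that a convex function on an interval which attains its maximum on a subinterval is constant, we conclude $g\equiv d_i$ on $K_d$, i.e. $\HP_d(F)\subset K_d\subset\{x:|xA_i|=d_i\}=\partial B_{d_i}(A_i)$ (the last equality, for the convex compact $A_i$ and $d_i>0$, follows from лемма~\ref{bound} and утверждение~\ref{Proj}). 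This contradicts $\HP_d(F)\cap\partial B_{d_i}(A_i)=\emptyset$ and $\HP_d(F)\ne\emptyset$, and the claim is proved.

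To finish, I fix $\delta_*$ as in the claim and, using теорема~\ref{cont} (continuity of $r\mapsto B_r(A_i)\cap K_d$ at $r=d_i$, which lies in the interior of the relevant half-line since $|A_i\,K_d|<d_i$ as $F_i=\emptyset$), choose $\delta\in\bigl(0,\min\{\delta_*,\gamma,d_i\}\bigr]$ small enough that $\eta:=d_H(K'_\delta,K_d)<\mu$; note $K'_\delta\supset\HP_d(F)\ne\emptyset$, so $K'_\delta\in\mathcal{H}(X)$. For $j\ne i$: $K'_\delta\subset K_d\subset B_{d_j}(A_j)$ and $A_j\subset B_{d_j}(K'_\delta)$ by the claim, so $d_H(A_j,K'_\delta)\le d_j$. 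For $i$: $K'_\delta\subset B_{d_i-\delta}(A_i)$, and $A_i\subset B_{d_i-\mu}(K_d)\subset B_{d_i-\mu}\bigl(B_\eta(K'_\delta)\bigr)=B_{d_i-\mu+\eta}(K'_\delta)$ by лемма~\ref{sum}, so $d_H(A_i,K'_\delta)\le\max\{d_i-\delta,\,d_i-\mu+\eta\}<d_i$. Summing over all $j$ gives $S(A,K'_\delta)<\sum_jd_j=S_A$, contradicting the minimality of $S_A$. Hence no such index $i$ exists, which proves the theorem.

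The main obstacle is the middle claim: guaranteeing that the non-far points of the other boundary compacts $A_j$ ($j\ne i$) still have a representative inside the trimmed compact $K'_\delta$. This is exactly where the negated hypothesis (that $\HP_d(F)$ avoids $\partial B_{d_i}(A_i)$) enters, together with convexity of the distance function $x\mapsto|xA_i|$, which is needed to exclude the degenerate configuration in which $K_d$ lies entirely on the sphere $\partial B_{d_i}(A_i)$. The remaining ingredients — passing from the uniform shrink of $\HP_d(F)$ (via лемма~\ref{incl}) to the actual perturbation, and controlling the Hausdorff change of $K_d$ — are routine once теорема~\ref{cont} and лемма~\ref{sum} are available.
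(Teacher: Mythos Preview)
Your proof is correct and shares the paper's overall architecture --- assume the conclusion fails for some index $i$, deduce $F_i=\emptyset$ and $\HP_d(F)\subset U_{d_i}(A_i)$, then trim $K_d$ to $K'_\delta=K_d\cap B_{d_i-\delta}(A_i)$ and obtain $S(A,K'_\delta)<S_A$ --- but the heart of the argument, namely showing $A_j\subset B_{d_j}(K'_\delta)$ for $j\neq i$, is handled by a genuinely different mechanism.

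The paper proceeds constructively: it first proves the auxiliary лемма~\ref{lm_1}, which says the thickened hook sets $H_j=B_{d_j+r}(F_j)\cap K_d$ stay at positive distance from $\partial B_{d_i}(A_i)$, then splits each $A_j$ into $A_j\cap U_r(F_j)$ (points near far points, whose $d_j$-balls meet $K_d$ only inside $H_j\subset K'_\delta$) and $A'_j=A_j\setminus U_r(F_j)$ (points already well inside $B_{d_j}(K_d)$, handled via лемма~\ref{incl}). This is a quantitative, two-piece decomposition that tracks several small parameters simultaneously.

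You instead run a sequential compactness argument: if the inclusion failed along $\delta_k\downarrow0$, extract limits $a_*\in A_j$, $x_*\in K_d\cap\partial B_{d_i}(A_i)$; the far case contradicts $\HP_d(F)\cap\partial B_{d_i}(A_i)=\emptyset$ directly, while in the non-far case you observe that $g(x)=|xA_i|$ is convex (since $A_i$ is convex), equals $d_i$ on the relatively open set $U_{d_j}(a_*)\cap K_d$, and is bounded by $d_i$ on the convex $K_d$, forcing $g\equiv d_i$ on $K_d$ and hence $\HP_d(F)\subset\partial B_{d_i}(A_i)$ --- again a contradiction. This bypasses лемма~\ref{lm_1} entirely at the cost of invoking the standard (but not stated in the paper) convexity of the distance-to-a-convex-set function. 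Your route is shorter and more conceptual; the paper's route is more explicit and self-contained within the lemmas already developed.
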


\begin{proof}
Допустим противное, а именно, пусть существует номер $i$ такой, что для любой $p\in \HP_d(F)$ верно $p\notin \HP_d(F_i)$ и $p\notin \partial B_{d_i}(A_i)$. Значит, $$\HP_d(F_i)=\emptyset.$$ Также по определению $\HP_d(F)\subset K_d.$ Но $K_d = \bigcap\limits_{i=1}^n B_{d_i}(A_i).$ Значит, $\HP_d(F)\subset B_{d_i}(A_i).$ Следовательно, ввиду $\HP_d(F)\cap \partial B_{d_i}(A_i) = \emptyset$ имеем $$\HP_d(F)\subset U_{d_i}(A_i).$$

Так как $\HP_d(F_j)\subset U_{d_i}(A_i)$ и $\HP_d(F_j)$ --- компакт согласно замечанию~\ref{lm_0}, то $\gamma_j = \bigl|\HP_d(F_j) \, \partial B_{d_i}(A_i)\bigr| > 0$ для всех непустых $\HP_d(F_j).$ По теореме~\ref{HP} существует по крайней мере один $A_j\in A$ такой, что $\HP_d(F_j)\neq \emptyset.$ Поэтому определено $$\gamma = \min\limits_{j: \, \HP_d(F_j) \, \neq \, \emptyset} \gamma_j > 0.$$ 

Пусть $\HP_d(F_k)\neq \emptyset,$ что эквивалентно $F_k\neq \emptyset$ по лемме~\ref{empt}. Тогда согласно лемме~\ref{lm_1} для $0<\varepsilon< \gamma$ найдётся $R_k>0$ такое, что для любого $0<r_k\le R_k$ верно $\Bigl|\bigl(B_{d_k+r_k}(F_k)\cap K_d\bigr) \,\, \partial B_{d_i}(A_i)\Bigr| \ge \gamma_k - \varepsilon \ge \gamma - \varepsilon > 0.$ 

Введём обозначения:
$$r = \min\limits_{j: \, \HP_d(F_j)\,\neq \, \emptyset} r_j>0;$$  
\begin{equation}\label{first}
H_j = B_{d_j+r}(F_j)\cap K_d.
\end{equation}
Согласно замечанию~\ref{rk:2} если $F_j=\emptyset,$ то $B_{d_j+r}(F_j)= \emptyset$ и поэтому $H_j= \emptyset.$ С другой стороны, так как $F_j\subset A_j$ и $B_{d_j}(a)\cap K_d\neq \emptyset$ для любой $a\in A_j,$ то из $H_j = \emptyset$ следует $B_{d_j+r}(F_j) = \emptyset.$ Но $F_j\subset B_{d_j+r}(F_j).$ Значит, $F_j = \emptyset.$ Таким образом, $H_j = \emptyset$ тогда и только тогда, когда $F_j = \emptyset.$

Заметим, что из сказанного выше вытекает
\begin{equation}\label{second}
\eta := \min\limits_{j: \, H_j\,\neq \, \emptyset} \bigl|H_j \, \partial B_{d_i}(A_i)\bigr| = \min\limits_{j: \, F_j\,\neq \, \emptyset} \bigl|H_j \, \partial B_{d_i}(A_i)\bigr|\ge \gamma - \varepsilon > 0.
\end{equation}
Отметим, что согласно замечанию~\ref{rk:2} при $F_j = \emptyset$ верно $U_{r}(F_j)\subset B_{r}(F_j) = \emptyset.$ Тогда для
\begin{equation}\label{second_00}
A'_j := A_j\setminus U_{r}(F_j)
\end{equation}
имеем $A'_j\subset A_j$ и при $F_j = \emptyset$ получаем $A'_j = A_j.$

Замечаем, что $A'_j \subset U_{d_j}(K_d)$ для любого $j.$ Следовательно, в силу компактности $A'_j,$ для всех непустых $A'_j$ положительна величина $\mu_j = \bigl|A'_j \, \partial B_{d_j}(K_d)\bigr|.$ Поэтому, так как все $d_i$ положительны, получаем 
$$\mu = \min\{\min\limits_{A'_j\,\neq\, \emptyset} \mu_j, \min\limits_{i} d_i\} > 0.$$
Таким образом, для любого $j$ в силу леммы~\ref{incl} имеем 
\begin{equation}\label{second_0}
A'_j \subset B_{d_j-\mu}(K_d).
\end{equation}

По условию все $A_j$ выпуклы. Также по условию $\HP_d(F_i) = \emptyset.$ Значит, для любой точки $a\in A_i$ верно $U_{d_i}(a)\cap K_d\neq \emptyset.$ Следовательно, верно $|A_i \, K_d| < d_i.$ Поэтому согласно теореме~\ref{cont} множество $B_{d_i}(A_i)\cap K_d$ меняется непрерывно при небольших уменьшениях $d_i$. Значит, для $0<\varepsilon<\min\{\eta, \mu\}$ существует $0<\delta\le d_i - |A_i \, K_d|$ такое, что 
$$K_d = B_{d_i}(A_i)\cap K_d \subset B_{\varepsilon}\bigl(B_{d_i - \delta}(A_i)\cap K_d\bigr).$$
Без ограничения общности будем считать $\delta \le \varepsilon.$ Для удобства введём обозначение 
\begin{equation}\label{third_0}
K=B_{d_i - \delta}(A_i)\cap K_d.
\end{equation}
Таким образом, 
\begin{equation}\label{third_00}
K_d\subset B_{\varepsilon}(K).
\end{equation}
Для любого $j$ в силу~(\ref{first}) верно $H_j\subset K_d\subset B_{d_i}(A_i)$ и в силу~(\ref{second}) для всех непустых $H_j$ имеем $0 < \eta \le \bigl|H_j \,\, \partial B_{d_i}(A_i)\bigr|.$ Но $\delta \le \varepsilon < \eta,$ поэтому по лемме~\ref{incl} имеем $H_j\subset B_{d_i-\delta}(A_i)$ для всех $j.$ При этом, как отмечалось выше, $H_j\subset K_d.$ Значит, ввиду~(\ref{third_0})  для всех $j$ верно
\begin{equation}\label{third}
H_j\subset K.
\end{equation}

\begin{lem}\label{lem:first}
Для всех $j$ верно $A_j\cap U_{r}(F_j)\subset B_{d_j}(K).$
\end{lem}

\begin{proof}

Если $F_j=\emptyset,$ то $A_j\cap U_{r}(F_j) = \emptyset \subset B_{d_j}(K).$ Пусть теперь $F_j\neq\emptyset.$

Для произвольной $p\in A_j$ имеем $B_{d_j}(p)\cap K_d\neq \emptyset.$ Значит, для любой точки $p\in A_j\cap U_{r}(F_j)$ тоже верно 
\begin{equation}\label{four_00}
B_{d_j}(p)\cap K_d\neq \emptyset.
\end{equation}
Однако согласно лемме~\ref{sum} имеем
\begin{equation}\label{four}
B_{d_j}\bigl(A_j\cap U_{r}(F_j)\bigr)\subset B_{d_j}\bigl(U_{r}(F_j)\bigr) \subset B_{d_j}\bigl(B_{r}(F_j)\bigr) = B_{d_j+r}(F_j).
\end{equation}
Отсюда в силу~(\ref{four}),~(\ref{first}), и~(\ref{third}) справедливо
\begin{equation}\label{four_0}
B_{d_j}\bigl(A_j\cap U_{r}(F_j)\bigr)\cap K_d\subset B_{d_j + r}(F_j)\cap K_d = H_j\subset K.
\end{equation}
Согласно замечанию~\ref{rk:0} верно $\bigcup\limits_{p\,\in \, A_j\cap U_{r}(F_j)} B_{d_j}(p) \subset B_{d_j}\bigl(A_j\cap U_{r}(F_j)\bigr).$ Поэтому в силу~(\ref{four_0}) справедливо 
\begin{equation}\label{five}
\Bigl(\bigcup\limits_{p\,\in \, A_j\cap U_{r}(F_j)} B_{d_j}(p)\Bigr)\cap K_d\subset K.
\end{equation}
Отсюда для любой точки $p\in A_j\cap U_{r}(F_j)$ в силу~(\ref{four_00}) и~(\ref{five}) имеем $\emptyset\neq B_{d_j}(p)\cap K_d\subset K.$ Значит, $p\in B_{d_j}(K).$ Следовательно, $$A_j\cap U_{r}(F_j)\subset B_{d_j}(K).$$ Лемма доказана.

\end{proof}

\begin{lem}\label{lem:second}
Для всех $j$ верно $A_j\subset B_{d_j}(K).$
\end{lem}

\begin{proof}

Для любого $j$ в силу~(\ref{second_0}) имеем 
\begin{equation}\label{six}
A'_j\subset  B_{d_j - \mu}(K_d)
\end{equation}
Согласно~(\ref{third_00}) верно
\begin{equation}\label{six_0}
B_{d_j - \mu}(K_d)\subset B_{d_j - \mu+\varepsilon}(K).
\end{equation}
Но ввиду сделанного выше выбора $0 < \varepsilon < \mu$ справедливо
\begin{equation}\label{six_00}
B_{d_j - \mu+\varepsilon}(K) \subset B_{d_j}(K).
\end{equation}
Отсюда согласно~(\ref{six}),~(\ref{six_0}) и~(\ref{six_00}) получаем 
\begin{equation}\label{six_000}
A'_j \subset B_{d_j}(K).
\end{equation}
При этом по лемме~\ref{lem:first} справедливо 
\begin{equation}\label{six_0000}
A_j\cap U_{r}(F_j)\subset B_{d_j}(K)
\end{equation}
Но ввиду~(\ref{second_00}) для любого $j$ верно 
\begin{equation}\label{six_00000}
A_j = A'_j\cup \bigl(A_j\cap U_{r}(F_j)\bigr).
\end{equation}
Значит, в силу~(\ref{six_000}),~(\ref{six_0000}) и~(\ref{six_00000}) получаем $$A_j\subset B_{d_j}(K).$$ Лемма доказана.

\end{proof}

Заметим, что $K\subset K_d\subset B_{d_j}(A_j)$ для всех $j.$ Значит, по лемме~\ref{lem:second} для всех $j$ имеем 
\begin{equation}\label{seven_0}
d_H(K, A_j)\le d_j.
\end{equation}

\begin{lem}\label{lem:third}
$d_H(K, A_i)\le \max(d_i-\delta, d_i-\mu+\varepsilon) < d_i.$
\end{lem}

\begin{proof}

Напомним, что по предположению $\HP_d(F_i)=\emptyset,$ что по лемме~\ref{empt} эквивалентно $F_i = \emptyset.$ Отсюда согласно~(\ref{second_00}) и замечанию~\ref{rk:2} верно 
\begin{equation}\label{seven}
A'_i=A_i\setminus U_{r}(F_i) = A_i.
\end{equation}
Далее, в силу~(\ref{third_0}) $$K = B_{d_i-\delta}(A_i)\cap K_d\subset B_{d_i-\delta}(A_i).$$ При этом ввиду~(\ref{seven}),~(\ref{six}) и~(\ref{six_0}) верно $$A_i = A'_i\subset B_{d_i - \mu+\varepsilon}(K).$$ Поэтому $d_H(K, A_i)\le \max(d_i-\delta, d_i-\mu+\varepsilon) < d_i,$ так как $0 < \varepsilon < \mu.$ Лемма доказана.

\end{proof}

Следовательно, согласно~(\ref{seven_0}) и лемме~\ref{lem:third} верно $S(A, K) < S(A, K_d).$ Получили противоречие. Теорема доказана.

\end{proof}

\subsection{Устойчивость границы в проблеме Ферма--Штейнера}\label{Saving}

Нам понадобятся обозначения:
\begin{itemize}
	\item $D_i^A$ --- множество всех дискретных точек компакта $A_i\in A$ и $D^A = \bigcup\limits_{i} D_i^A;$
	\item $\HP_d(p, D_i^A) = B_{d_i}(p)\cap K_d,$ где $p\in D_i^A;$
	\item $\HP_d(D_i^A) = \bigcup\limits_{p\in D_i^A} \HP_d(p, D_i^A);$
	\item $\HP_d(D^A) = \bigcup\limits_{i} \HP_d(D_i^A);$
\end{itemize}

Везде, где будет ясно, о какой границе $A$ идёт речь, верхний индекс $A$ писаться не будет, то есть $D_i^A$ и $D^A$ будут заменяться на $D_i$ и $D$ соответственно. 

Пусть дана финитная граница $A=\{A_1,\ldots,A_n\}\subset \mathcal{H}(X).$ Обозначим границу $\{\Conv(A_1),\ldots, \Conv(A_n)\}$ через $A^{\Conv}$. Введём определение устойчивой границы.

\begin{dfn}
Финитную границу $A=\{A_1,\ldots,A_n\}\subset \mathcal{H}(X)$ назовём \textit{устойчивой}, если $S_A=S_{A^{\Conv}},$ иначе --- \textit{неустойчивой}.
\end{dfn}

\begin{rk}\label{remark}
В силу следствия~\ref{dH} для любой финитной границы $A$ верно неравенство $S_A\ge S_{A^{\Conv}}$.
\end{rk}

Напомним, что множество $\bigcap\limits_{i=1}^n B_{d_i}\bigl(\Conv(A_i)\bigr)$ в разделе~\ref{Convex} было обозначено через $K_d^{\Conv}.$ Следующее утверждение было сделано в работе~\cite{Tropin}, однако здесь приводится его альтернативное доказательство на основе следствия~\ref{dH}.

\begin{ass}[Необходимое условие устойчивости]\label{New_K_d}
Если граница $A = \{A_1, \ldots, A_n\}$ устойчива, то $K_d^{\Conv}$ является максимальным компактом Штейнера для $A^{\Conv}.$
\end{ass}

\begin{proof}

Пусть $A = \{A_1, \ldots, A_n\}$ устойчива. Тогда в силу $S_A = S_{A^{\Conv}}$ и следствия~\ref{dH} мы получаем, что $d_H\bigl(\Conv(A_i), K_d^{\Conv}\bigr) = d_i$ для всех $i.$ Значит, так как $K_d^{\Conv} = \bigcap\limits_{i=1}^n B_{d_i}\bigl(\Conv(A_i)\bigr),$ мы получаем, что $K_d^{\Conv}$ --- максимальный компакт Штейнера для $A^{\Conv}.$

\end{proof}

Положим $$U_d^{\Conv} = \bigcap\limits_{i=1}^n U_{d_i}\bigl(\Conv(A_i)\bigr).$$ Заметим, что так как внутренность пересечения равна пересечению внутренностей, то $\Int K_d^{\Conv} = U_d^{\Conv}$.

Напомним, что согласно теореме~\ref{MS_2} в случае финитной границы $A$ и пространства $X$ со строго выпуклой нормой для любого класса $\Sigma_d(A)$ по крайней мере в одном граничном компакте $A_i$ найдётся дискретная точка, то есть $\HP_d(D^A)\neq \emptyset.$

Всюду далее в случае финитной границы $A$ количество точек в компакте $A_i\in A$ будет обозначаться через $m_i.$

\begin{thm}[Достаточное условие неустойчивости]\label{DStay}
Пусть норма пространства $X$ строго выпукла, граница $A$ финитна, все $d_i$ положительны и $U_d^{\Conv}\neq \emptyset.$ Тогда граница $A$ является неустойчивой, если существует номер $s$ такой, что 
\begin{equation}\label{thm_cond}
\Bigl(\bigcup\limits_{j=1}^{m_s} \partial B_{d_s}(a_j^s)\Bigr)\cap \HP_d(D^A)\subset U_d^{\Conv}.
\end{equation}
\end{thm}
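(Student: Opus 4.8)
The plan is to prove instability directly, by exhibiting under hypothesis~(\ref{thm_cond}) a compact $K\in\mathcal{H}(X)$ with $S\bigl(A^{\Conv},K\bigr)<S_A$; this forces $S_{A^{\Conv}}<S_A$, i.e.\ $A$ is unstable (Remark~\ref{remark} gives the reverse estimate $S_{A^{\Conv}}\le S_A$, which we strengthen to a strict one). Since Corollary~\ref{dH} already yields $d_H\bigl(\Conv(A_i),K_d^{\Conv}\bigr)\le d_i$ for every $i$, where $K_d^{\Conv}=\bigcap_i B_{d_i}\bigl(\Conv(A_i)\bigr)$, the natural competitor is the slightly shrunk set
$$K=K(\delta)=B_{d_s-\delta}\bigl(\Conv(A_s)\bigr)\cap K_d^{\Conv},\qquad \delta>0\ \text{small}.$$
It is a convex compact set ($K$ is an intersection of balls around convex compact sets), $K\subset K_d^{\Conv}\subset B_{d_j}\bigl(\Conv(A_j)\bigr)$ for all $j$ and $K\subset B_{d_s-\delta}\bigl(\Conv(A_s)\bigr)$, so everything reduces to the two coverage statements $\Conv(A_s)\subset B_{d_s-\delta'}(K)$ for some $\delta'>0$ and $\Conv(A_j)\subset B_{d_j}(K)$ for each $j\ne s$.

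The key observation is that $K_d^{\Conv}$ cannot touch $B_{d_s}(a_l^s)$ tangentially for any vertex $a_l^s$ of $A_s$. Indeed, suppose $\bigl|a_l^s\,K_d^{\Conv}\bigr|=d_s$. Since $K_d\subset K_d^{\Conv}$ and $a_l^s\in A_s\subset B_{d_s}(K_d)$, also $\bigl|a_l^s\,K_d\bigr|=d_s$, so $a_l^s$ is a \emph{far} point of $A_s$ (Definition~\ref{far_dfn}), hence a \emph{non-dense} point (Remark~\ref{eq}), hence --- $A$ being finite and the norm strictly convex --- a \emph{discrete} point (Assertion~\ref{eq_d}). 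Pick $y\in K_d$ with $\|y-a_l^s\|=d_s$; then $y\in B_{d_s}(a_l^s)\cap K_d\subset\HP_d(D^A)$ and $y\in\partial B_{d_s}(a_l^s)$, so by~(\ref{thm_cond}) $y\in U_d^{\Conv}=\Int K_d^{\Conv}$. But $y$ is then an interior point of $K_d^{\Conv}$ realizing the distance $d_s$ from the external point $a_l^s$, which is impossible (moving from $y$ towards $a_l^s$ stays in $K_d^{\Conv}$ and decreases the distance). As $A_s$ has finitely many vertices, $\bigl|a_l^s\,K_d^{\Conv}\bigr|\le d_s-\rho$ for all $l$ and some $\rho>0$, and since $x\mapsto\bigl|x\,K_d^{\Conv}\bigr|$ is convex its supremum over the polytope $\Conv(A_s)$ is attained at a vertex, so $\Conv(A_s)\subset B_{d_s-\rho}\bigl(K_d^{\Conv}\bigr)$.

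Now fix a small $\delta>0$. Because $\bigl|\Conv(A_s)\,K_d^{\Conv}\bigr|<d_s$ (using $U_d^{\Conv}\ne\emptyset$, any interior point of $K_d^{\Conv}$ lies strictly inside $B_{d_s}(\Conv(A_s))$) and $B_{d_s}\bigl(\Conv(A_s)\bigr)\cap K_d^{\Conv}=K_d^{\Conv}$, Theorem~\ref{cont} gives $d_H\bigl(K(\delta),K_d^{\Conv}\bigr)\to0$ as $\delta\to0^{+}$. So for $\delta$ small enough $d_H(K,K_d^{\Conv})<\rho/2$, whence $\Conv(A_s)\subset B_{d_s-\rho/2}(K)$ and, together with $K\subset B_{d_s-\delta}(\Conv(A_s))$, $d_H\bigl(\Conv(A_s),K\bigr)<d_s$. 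For $j\ne s$ and a vertex $a_k^j$ of $A_j$ with $\bigl|a_k^j\,K_d^{\Conv}\bigr|<d_j$, the same continuity gives $\bigl|a_k^j\,K\bigr|\le d_j$ for $\delta$ small. If instead $\bigl|a_k^j\,K_d^{\Conv}\bigr|=d_j$, then as above $a_k^j$ is a far, hence discrete, vertex of $A_j$, a point $y_{kj}\in K_d$ with $\|y_{kj}-a_k^j\|=d_j$ lies in $\HP_d(D^A)$, and $y_{kj}$ is also a nearest point of $K_d^{\Conv}$ to the external point $a_k^j$. I claim $\bigl|y_{kj}\,\Conv(A_s)\bigr|<d_s$: otherwise $\bigl|y_{kj}\,A_s\bigr|=d_s$ (as $y_{kj}\in K_d\subset B_{d_s}(A_s)$), so $y_{kj}\in\HP_d(D^A)\cap\bigcup_l\partial B_{d_s}(a_l^s)\subset U_d^{\Conv}$ by~(\ref{thm_cond}), again contradicting that an interior point of $K_d^{\Conv}$ realizes the positive distance $d_j$ to $a_k^j$. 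There are finitely many such pinned vertices, so for $\delta$ small enough every corresponding $y_{kj}$ satisfies $\bigl|y_{kj}\,\Conv(A_s)\bigr|\le d_s-\delta$, i.e.\ $y_{kj}\in K$, and then $a_k^j\in B_{d_j}(y_{kj})\subset B_{d_j}(K)$. Convexity of $x\mapsto\bigl|x\,K\bigr|$ ($K$ being convex) then gives $\Conv(A_j)\subset B_{d_j}(K)$ for all $j\ne s$, and altogether $S\bigl(A^{\Conv},K\bigr)<d_s+\sum_{j\ne s}d_j=S_A$, so $S_{A^{\Conv}}<S_A$.

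The part I expect to be most delicate is the constant bookkeeping and, more essentially, keeping straight the two geometries involved: hypothesis~(\ref{thm_cond}), the discrete hook set $\HP_d(D^A)$ and the far/discrete points all refer to the original finite boundary $A$ and to $K_d=\bigcap_i B_{d_i}(A_i)$, whereas $K$, the balls $B_{d_i}\bigl(\Conv(A_i)\bigr)$ and the open set $U_d^{\Conv}$ refer to $A^{\Conv}$; the bridge between them is supplied by $\Conv(K_d)\subset K_d^{\Conv}$ (Corollary~\ref{KdConv}), Assertions~\ref{CapConv} and~\ref{BConv}, and the fact --- proper to finite boundaries under a strictly convex norm --- that a far vertex is contacted only through points of $\HP_d(D^A)$. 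Choosing a single $\delta>0$ that simultaneously serves the $s$-th constraint, all the non-tangentially contacted vertices of the $A_j$, and all the pinned vertices is then a finite, hence harmless, compatibility check.
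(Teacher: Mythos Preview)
Your argument is correct. It is, however, organised quite differently from the paper's proof, and the difference is worth recording.

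The paper argues by contradiction. Assuming $A$ stable, Assertion~\ref{New_K_d} makes $K_d^{\Conv}$ a maximal Steiner compact for the convex boundary $A^{\Conv}$ with the same distance vector $d$, so Theorem~\ref{pHP} applies: for every index $i$ there must be a point of $\HP_d\bigl(F^{A^{\Conv}}\bigr)$ either in $\HP_d\bigl(F_i^{A^{\Conv}}\bigr)$ or on $\partial B_{d_i}\bigl(\Conv(A_i)\bigr)$. The bulk of the proof (Lemmas~\ref{suf_cond:0}--\ref{suf_cond:3}) shows that, under~(\ref{thm_cond}), the index $s$ violates this: $F_s^{A^{\Conv}}=\emptyset$ and every hook point of every other $\Conv(A_i)$ lies strictly inside $B_{d_s}\bigl(\Conv(A_s)\bigr)$. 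Thus the paper uses Theorem~\ref{pHP} as a black box and spends its effort translating hypothesis~(\ref{thm_cond}), formulated for the discrete boundary, into the ``no far contact, no boundary contact'' statement for the convex boundary.

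You instead construct the competitor $K=B_{d_s-\delta}\bigl(\Conv(A_s)\bigr)\cap K_d^{\Conv}$ by hand and verify $S(A^{\Conv},K)<S_A$ directly. Your vertex-by-vertex analysis, combined with the convexity of $x\mapsto|x\,K|$, is essentially an inlined, streamlined version of the competitor construction inside the proof of Theorem~\ref{pHP} (compare your $K$ with the $K=B_{d_i-\delta}(A_i)\cap K_d$ there), tuned to the fact that each $A_j$ has finitely many vertices. The two ``contradictions from an interior nearest point'' you use are exactly the mechanism by which~(\ref{thm_cond}) enters, and they correspond in spirit to the paper's Lemmas~\ref{suf_cond:0} and~\ref{suf_cond:2}. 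What you gain is a self-contained proof that does not invoke Theorem~\ref{pHP} (whose own proof is fairly long) and avoids the stability detour through Assertion~\ref{New_K_d}; what the paper gains by factoring through Theorem~\ref{pHP} is a cleaner separation between the general convex-boundary phenomenon and the parts of the argument that are specific to finite boundaries and strictly convex norms.
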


\begin{proof}

Всюду далее будем считать, что условие~(\ref{thm_cond}) выполнено. Опишем план доказательства. Оно будет построено методом от противного, то есть сначала мы предположим, что граница устойчива. Тогда согласно утверждению~\ref{New_K_d} верно, что $K_d^{\Conv}$ --- максимальный компакт Штейнера для границы $A^{\Conv}.$ На основании этого факта в начале доказательства будет показано, что все точки из $A_s$ не являются далёкими по отношению к  $K_d^{\Conv}$ --- лемма~\ref{suf_cond:0}. Затем отталкиваясь от этого, мы докажем, что и все остальные точки из $\Conv(A_s)$ не являются далёкими по отношению к  $K_d^{\Conv}$. Значит, мы таким образом докажем, что верно $\HP_d\bigl(F_s^{A^{\Conv}}\bigr) = \emptyset$ --- лемма~\ref{suf_cond:1}. Затем мы покажем, что при $i\neq s$ для любой точки $a\in A_i\cap F_i^{A^{\Conv}}$ верно $\HP_d\bigl(a, F_i^{A^{\Conv}}\bigr) \cap \partial B_{d_s}\bigl(\Conv(A_s)\bigr) = \emptyset$ --- лемма~\ref{suf_cond:2}. И наконец, докажем, что $\HP_d\bigl(F_i^{A^{\Conv}}\bigr)\cap \partial B_{d_s}\bigl(\Conv(A_s)\bigr) = \emptyset$ при $i\neq s$ --- лемма~\ref{suf_cond:3}. В связи с этим мы придём к противоречию с теоремой~\ref{pHP}. Отсюда получим, что граница $A$ является неустойчивой.

\begin{lem}\label{suf_cond:0}
$A_s\cap F_s^{A^{\Conv}} = \emptyset.$
\end{lem}

\begin{proof}

Пусть $a\in A_s.$ Рассмотрим два случая: $B_{d_s}(a)\cap K_d$ конечно и бесконечно. Отметим, что множество $B_{d_s}(a)\cap K_d$ не может быть пустым, так как $A_s\subset B_{d_s}(K_d).$

Если $B_{d_s}(a)\cap K_d$ конечно, то $a\in D_s^A$ и $B_{d_s}(a)\cap K_d = \HP_d(a, D_s^A).$  Значит, $\partial B_{d_s}(a)\cap K_d\subset  \HP_d(a, D_s^A).$ Но $ \HP_d(a, D_s^A)\subset K_d.$ Отсюда $\partial B_{d_s}(a)\cap K_d = \partial B_{d_s}(a)\cap  \HP_d(a, D_s^A).$ Тогда согласно условию~(\ref{thm_cond}) справедливо $\partial B_{d_s}(a)\cap K_d\subset U_d^{\Conv}.$ 

Если $\partial B_{d_s}(a)\cap K_d = \emptyset,$ то $U_{d_s}(a)\cap K_d\neq \emptyset,$ так как $B_{d_s}(a)\cap K_d\neq \emptyset.$ Но тогда ввиду $K_d\subset K_d^{\Conv}$ справедливо $U_{d_s}(a)\cap K_d^{\Conv}\neq \emptyset$ и, значит, $a\notin F_s^{A^{\Conv}}.$

Теперь рассмотрим ситуацию, когда $\partial B_{d_s}(a)\cap K_d \neq \emptyset.$ Напомним, что мы имеем $\partial B_{d_s}(a)\cap K_d\subset U_d^{\Conv}.$ Поэтому в данном случае имеем $\partial B_{d_s}(a)\cap K_d\cap U_d^{\Conv}\neq \emptyset.$ Отсюда справедливо $\partial B_{d_s}(a)\cap U_d^{\Conv}\neq \emptyset.$ Следовательно, так как $U_d^{\Conv}$ открыто, по лемме~\ref{interior} получаем $U_{d_s}(a)\cap U_d^{\Conv}\neq \emptyset,$ и, значит, $U_{d_s}(a)\cap K_d^{\Conv}\neq \emptyset$. Отсюда $a\notin F_s^{A^{\Conv}}.$

Рассмотрим теперь второй случай: $B_{d_s}(a)\cap K_d$ бесконечно. Но тогда $B_{d_s}(a)\cap K_d^{\Conv}$ тоже бесконечно, так как $K_d\subset K_d^{\Conv}.$ Отсюда в силу строгой выпуклости нормы пространства $X$ и выпуклости компакта $K_d^{\Conv}$ множество $U_{d_s}(a)\cap K_d^{\Conv}$ также бесконечно. Значит, тоже получаем $a\notin F_s^{A^{\Conv}}.$

Следовательно, для любой $a\in A_s$ верно $a\notin F_s^{A^{\Conv}}.$ Значит, $A_s\cap F_s^{A^{\Conv}} = \emptyset.$ Лемма доказана.

\end{proof}

\begin{lem}\label{suf_cond:1}
$F_s^{A^{\Conv}} = \emptyset$ и, значит, $\HP_d\bigl(F_s^{A^{\Conv}}\bigr) = \emptyset.$
\end{lem}

\begin{proof}

Рассмотрим теперь точку $a\in \Conv(A_s)\setminus A_s$. По свойству выпуклых оболочек для неё можно выбрать такие $\lambda_j \ge 0$, что $\sum\limits_{j=1}^{m_s} \lambda_j a_j^s = a$, где $\sum\limits_{j=1}^{m_s} \lambda_j = 1.$ Так как по лемме~\ref{suf_cond:0} верно $U_{d_s}(a_j^s)\cap K_d^{\Conv}\neq \emptyset$, то для каждой точки $a_j^s$ существует $p_j\in K_d^{\Conv}$ такая, что $||a_j^s-p_j||<d_s$. Обозначим сумму $\sum\limits_{j=1}^{m_s} \lambda_j p_j$ через $p$. Так как $K_d^{\Conv}$ --- выпуклое множество, то $p\in K_d^{\Conv}$.  Имеем $$||a-p||=||\sum\limits_{j=1}^{m_s} \lambda_j(a_j^s-p_j)||\le \sum\limits_{j=1}^{m_s} \lambda_j ||a_j^s-p_j||<d_s.$$ Значит, $U_{d_s}(a)\cap K_d^{\Conv}\neq \emptyset$. Следовательно, для любой точки $a\in \Conv(A_s)\setminus A_s$ верно $a\notin F_s^{A^{\Conv}}.$ Отсюда ввиду леммы~\ref{suf_cond:0} верно $a\notin F_s^{A^{\Conv}}$ для всех точек $a\in \Conv(A_s).$ Поэтому $F_s^{A^{\Conv}} = \emptyset$ и, значит, $\HP_d\bigl(F_s^{A^{\Conv}}\bigr) = \emptyset.$ Лемма доказана.

\end{proof}

\begin{lem}\label{suf_cond:2}
$\HP_d\bigl(a, F_i^{A^{\Conv}}\bigr)\subset U_{d_s}\bigl(\Conv(A_s)\bigr)$ для любой $a\in A_i\cap F_i^{A^{\Conv}}$ при $i\neq s.$
\end{lem}

\begin{proof}

Так как $a\in F_i^{A^{\Conv}},$ то $U_{d_i}(a)\cap K_d^{\Conv}=\emptyset.$ Поэтому в силу строгой выпуклости нормы пространства $X$ и выпуклости компакта $K_d^{\Conv}$ множество $B_{d_i}(a)\cap K_d^{\Conv}$ одноточечно. Но тогда пересечение $B_{d_i}(a)\cap K_d$ тоже одноточечно как непустое подмножество $B_{d_i}(a)\cap K_d^{\Conv}$, то есть 
\begin{equation}\label{eql}
\HP_d\bigl(a, F_i^{A^{\Conv}}\bigr)=B_{d_i}(a)\cap K_d^{\Conv}=B_{d_i}(a)\cap K_d=\HP_d(a,D_i^A).
\end{equation}
По условию~(\ref{thm_cond}) и в силу $U_d^{\Conv} = \bigcap\limits_{j=1}^{n} U_{d_j}\bigl(\Conv(A_j)\bigr)$ справедливо 
\begin{equation}\label{one_eq}
\bigcup\limits_{j=1}^{m_s} \partial B_{d_s}(a_j^s)\cap \HP_d(a, D_i^A)\subset U_d^{\Conv}\subset U_{d_s}\bigl(\Conv(A_s)\bigr),
\end{equation}
При этом так как $\HP_d(a, D_i^A)\subset K_d\subset B_{d_s}(A_s),$ то 
\begin{equation}\label{two_eq}
\HP_d(a, D_i^A)\setminus \bigcup\limits_{j=1}^{m_s} \partial B_{d_s}(a_j^s)\subset U_{d_s}(A_s)\subset U_{d_s}\bigl(\Conv(A_s)\bigr).
\end{equation}
Отсюда в силу~(\ref{one_eq}) и~(\ref{two_eq}) получаем $\HP_d(a, D_i^A)\subset U_{d_s}\bigl(\Conv(A_s)\bigr)$. Значит, ввиду~(\ref{eql}) $$\HP_d\bigl(a, F_i^{A^{\Conv}}\bigr)\subset U_{d_s}\bigl(\Conv(A_s)\bigr).$$ Лемма доказана.

\end{proof}

\begin{lem}\label{suf_cond}
Пусть $i\neq s,$  $a\in F_i^{A^{\Conv}}$ и для каждой $a_j^i\in A_i$ выполняется $$B_{d_i}(a_j^i)\cap K_d^{\Conv}\cap U_{d_s}\bigl(\Conv(A_s)\bigr)\neq \emptyset.$$ Тогда верно $$\HP_d\bigl(a, F_i^{A^{\Conv}}\bigr)\subset U_{d_s}\bigl(\Conv(A_s)\bigr).$$
\end{lem}

\begin{proof}

Имеем $a\in F_i^{A^{\Conv}}\subset \Conv(A_i).$ Значит, по свойству выпуклых оболочек найдутся такие $\lambda_j \ge 0$ с условием $\sum\limits_{j=1}^{m_i} \lambda_j = 1,$ что $a = \sum\limits_{j=1}^{m_i} \lambda_j a_j^i.$ В таком случае для каждой $a_j^i\in A_i$ возьмём точку $p_j\in B_{d_i}(a_j^i)\cap K_d^{\Conv}\cap U_{d_s}\bigl(\Conv(A_s)\bigr).$ В силу выпуклости $K_d^{\Conv}\cap U_{d_s}\bigl(\Conv(A_s)\bigr)$ верно $$p:=\sum\limits_{j=1}^{m_i} \lambda_j p_j \in K_d^{\Conv}\cap U_{d_s}\bigl(\Conv(A_s)\bigr).$$ Но тогда так как $p_j\in B_{d_i}(a_j^i)$ $$||a-p||=||\sum\limits_{j=1}^{m_i}  \lambda_j(a_j^i-p_j)||\le \sum\limits_{j=1}^{m_i} \lambda_j ||a_j^i-p_j||\le d_i.$$ Значит, справедливо $p\in B_{d_i}(a)\cap K_d^{\Conv}\cap U_{d_s}\bigl(\Conv(A_s)\bigr).$ Но в силу строгой выпуклости нормы пространства $X,$ выпуклости компакта $K_d^{\Conv},$ а также согласно условию $U_{d_i}(a)\cap K_d^{\Conv}=\emptyset$ множество $\HP_d\bigl(a, F_i^{A^{\Conv}}\bigr) = B_{d_i}(a)\cap K_d^{\Conv}$ одноточечно. Значит, $\{p\} = \HP_d\bigl(a, F_i^{A^{\Conv}}\bigr)\subset U_{d_s}\bigl(\Conv(A_s)\bigr).$ Лемма доказана.

\end{proof}

\begin{lem}\label{suf_cond:3}
$\HP_d\bigl(a, F_i^{A^{\Conv}}\bigr)\subset U_{d_s}\bigl(\Conv(A_s)\bigr)$ для любой $a\in F_i^{A^{\Conv}}$ при $i\neq s,$ то есть $\HP_d\bigl(F_i^{A^{\Conv}}\bigr)\subset U_{d_s}\bigl(\Conv(A_s)\bigr).$
\end{lem}

\begin{proof}

Пусть $a\in \Conv(A_i)\setminus A_i$, где $i\neq s,$ и 
\begin{equation}\label{t0}
a\in F_i^{A^{\Conv}},
\end{equation}
то есть $U_{d_i}(a)\cap K_d^{\Conv}=\emptyset.$
 
Нам нужно показать, что $B_{d_i}(a)\cap K_d^{\Conv}\subset U_{d_s}\bigl(\Conv(A_s)\bigr).$ Докажем, что для каждой $a_j^i\in A_i$ множество $B_{d_i}(a_j^i)\cap K_d^{\Conv}\cap U_{d_s}\bigl(\Conv(A_s)\bigr)$ непусто. 

Существует два случая. Первый, когда $U_{d_i}(a_j^i)\cap K_d^{\Conv} = \emptyset,$ то есть $a_j^i\in F_i^{A^{\Conv}}.$ Но тогда по лемме~\ref{suf_cond:2} справедливо $B_{d_i}(a_j^i)\cap K_d^{\Conv} = \HP_d\bigl(a_j^i, F_i^{A^{\Conv}}\bigr)\subset U_{d_s}\bigl(\Conv(A_s)\bigr).$ Поэтому $B_{d_i}(a_j^i)\cap K_d^{\Conv}\cap U_{d_s}\bigl(\Conv(A_s)\bigr)\neq \emptyset,$ так как $B_{d_i}(a_j^i)\cap K_d^{\Conv}\neq \emptyset$ ввиду $a_j^i\in A_i\subset B_{d_i}\bigl(K_d^{\Conv}\bigr)$ согласно определению расстояния Хаусдорфа.

Второй случай, когда $U_{d_i}(a_j^i)\cap K_d^{\Conv} \neq \emptyset,$ то есть $a_j^i\notin F_i^{A^{\Conv}}.$ По условию $U_d^{\Conv}\neq\emptyset,$ поэтому $K_d^{\Conv}$ выпуклое множество с непустой внутренностью. Значит, по лемме~\ref{interior} если $U_{d_i}(a_j^i)\cap \partial K_d^{\Conv} \neq \emptyset,$ то $U_{d_i}(a_j^i)\cap U_d^{\Conv} \neq \emptyset.$ Если же $U_{d_i}(a_j^i)\cap \partial K_d^{\Conv} = \emptyset,$ то всё равно $U_{d_i}(a_j^i)\cap U_d^{\Conv} \neq \emptyset,$ так как $U_{d_i}(a_j^i)\cap K_d^{\Conv} \neq \emptyset$ согласно предположению. Отсюда $U_{d_i}(a_j^i)\cap U_d^{\Conv} \neq \emptyset.$ Но так как $K_d^{\Conv}\subset B_{d_s}\bigl(\Conv(A_s)\bigr),$ то $U_d^{\Conv}\subset U_{d_s}\bigl(\Conv(A_s)\bigr).$ Следовательно, $U_{d_i}(a_j^i)\cap U_d^{\Conv}\cap U_{d_s}\bigl(\Conv(A_s)\bigr) \neq \emptyset,$ и отсюда $B_{d_i}(a_j^i)\cap K_d^{\Conv}\cap U_{d_s}\bigl(\Conv(A_s)\bigr) \neq \emptyset.$

Таким образом, для каждой $a_j^i\in A_i$ $$B_{d_i}(a_j^i)\cap K_d^{\Conv}\cap U_{d_s}\bigl(\Conv(A_s)\bigr)\neq \emptyset.$$ Значит, согласно условию~(\ref{t0}) и в силу леммы~\ref{suf_cond} справедливо $$\HP_d\bigl(a, F_i^{A^{\Conv}}\bigr)\subset U_{d_s}\bigl(\Conv(A_s)\bigr)$$ при $i\neq s.$ Отсюда ввиду произвольности точки $a\in \Conv(A_i)\setminus A_i$ и леммы~\ref{suf_cond:2} получаем $$\HP_d\bigl(F_i^{A^{\Conv}}\bigr)\subset U_{d_s}\bigl(\Conv(A_s)\bigr).$$ Лемма доказана.

\end{proof}

Таким образом, существует номер $s$ такой, что для любой точки $t\in \HP_d\bigl(F^{A^{\Conv}}\bigr)$ верно $t\notin \HP_d\bigl(F_s^{A^{\Conv}}\bigr)$ согласно лемме~\ref{suf_cond:1} и $t\notin \partial B_{d_s}\bigl(\Conv(A_s)\bigr)$ согласно лемме~\ref{suf_cond:3}. Получили противоречие с теоремой~\ref{pHP}. Доказательство теоремы закончено.

\end{proof}

\subsection{Пример неустойчивой границы}\label{Example}

В качестве примера возьмём конфигурацию из работы~\cite{M}, где $A = \{A_1, A_2, A_3\}\subset \mathcal{H}\bigl(\mathbb{R}^2\bigr)$ и $A_i = \{a_i, b_i\}$ для всех $i,$ см. рис.~\ref{Circle :image}.
\begin{figure}[h]
\center{\includegraphics[scale=0.65]
{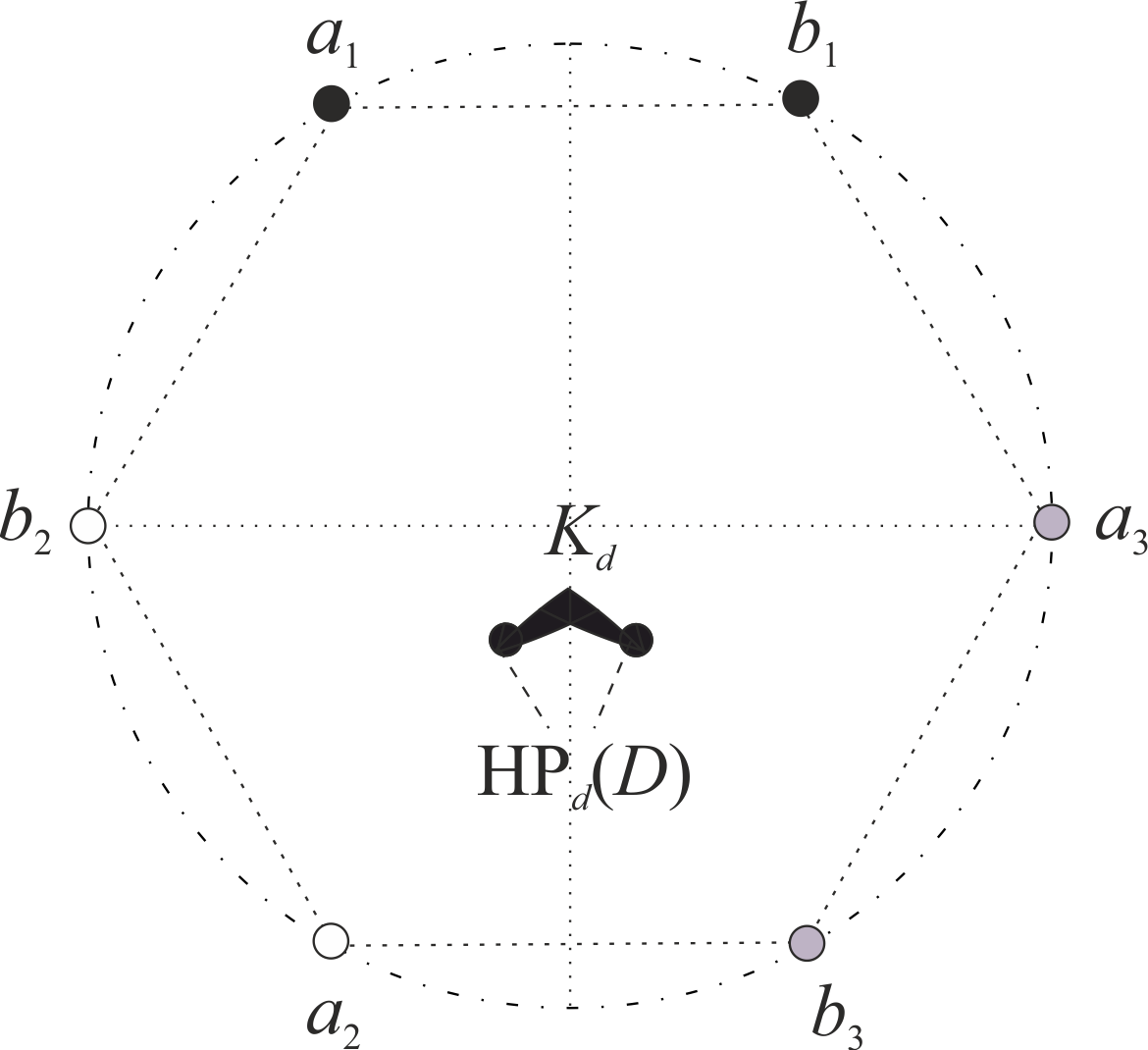}}
\caption{Конфигурация из работы~\cite{M}.}
\label{Circle :image} 
\end{figure}
В данной конфигурации множество точек $\{a_1, b_1, a_2, b_2, a_3, b_3\}$ расположено на единичной окружности с центром в начале координат, и оно является множеством вершин правильного шестиугольника. Координаты точек следующие:
$$a_1 = \Bigl(-\cos(\pi/3); \sin(\pi/3)\Bigr);$$ $$b_1 = \Bigl(\cos(\pi/3); \sin(\pi/3)\Bigr);$$ 
$$a_2 = \Bigl(-\cos(\pi/3); -\sin(\pi/3)\Bigr);$$ $$b_2 = (-1; 0);$$
$$a_3 = (1; 0);$$ $$b_3 = \Bigl(\cos(\pi/3); -\sin(\pi/3)\Bigr).$$

Чёрным цветом на рис.~\ref{Circle :image} изображён максимальный компакт Штейнера $K_d$ одного из трёх классов решений для границы $A.$ Также в границе $K_d$ выделено множество $\HP_d(D),$ которое в данном случае состоит из двух точек и совпадает с минимальным компактом Штейнера $K_\lambda,$ являющимся единственным минимальным компактом в рассматриваемом классе решений, см. работу~\cite{M}. 

Возьмём теперь границу $A^{\Conv} = \bigl\{\Conv(A_1), \Conv(A_2), \Conv(A_3)\}\bigr\},$ см. рис.~\ref{Conv :image}, и обозначим левую точку $\HP_d(D)$ через $p,$ а правую --- через $q.$
\begin{figure}[h]
\center{\includegraphics[scale=0.65]
{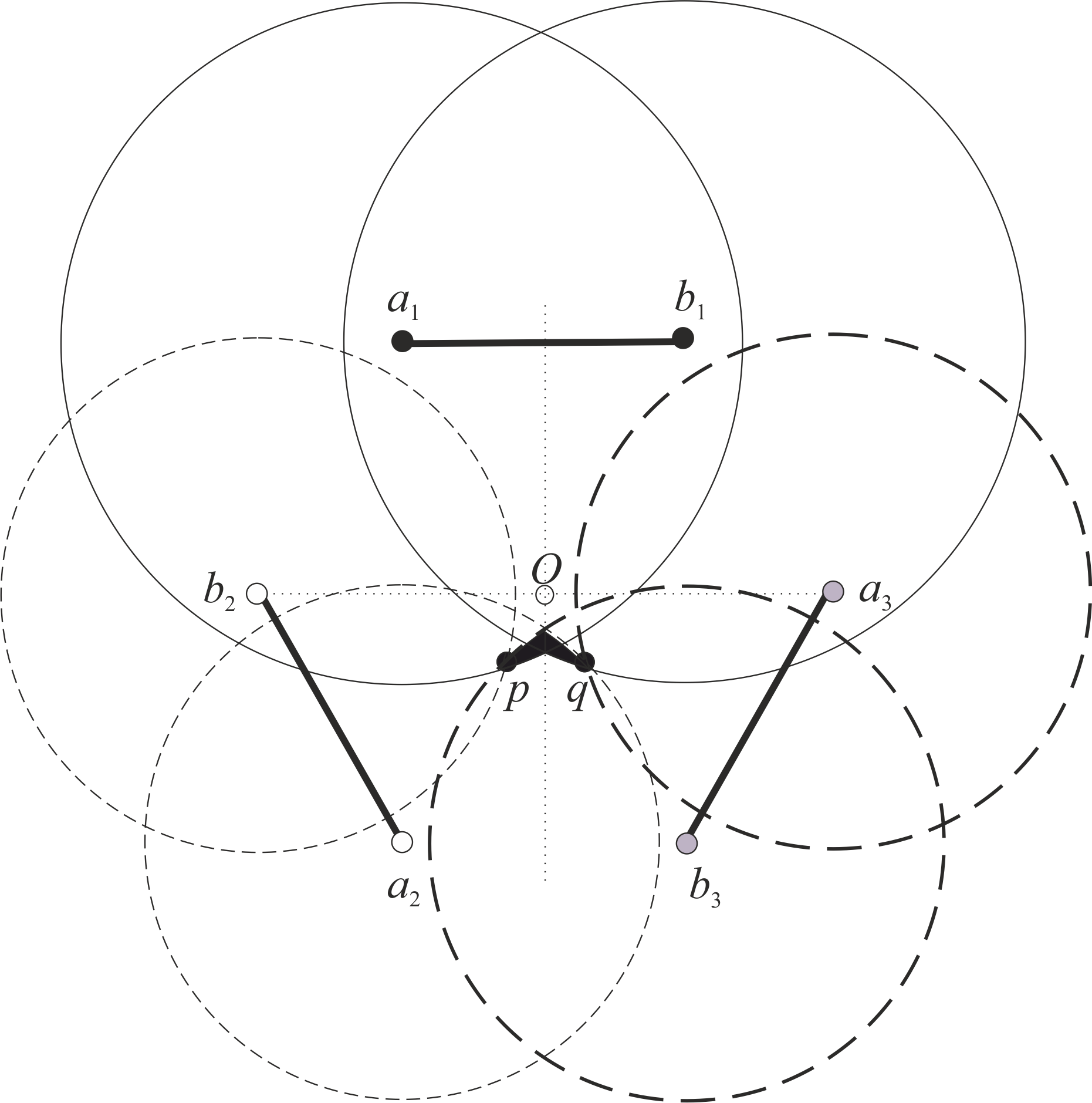}}
\caption{Граница $A^{\Conv}$ и окрестности $U_{d_1}(A_1), U_{d_2}(A_2), U_{d_3}(A_3).$}
\label{Conv :image} 
\end{figure}

Согласно работе~\cite{M} 
\begin{equation}\label{eval}
||O - p|| = \frac{\sqrt{5}-\sqrt{4\sqrt{5}-7}}{4} < 0.5 .
\end{equation} 
Также точка $p$ лежит на отрезке $[O, a_2]$ и угол между $[O, a_2]$ и $[O, b_2]$ равен $\pi/3.$ Значит, в декартовых координатах $$p = \frac{\sqrt{5}-\sqrt{4\sqrt{5}-7}}{4}\cdot \bigl(-\cos(\pi/3); -\sin(\pi/3) \bigr).$$ Точка $q$ располагается зеркально относительно вертикальной оси симметрии, проходящей через точку $O,$ поэтому $$q =\frac{\sqrt{5}-\sqrt{4\sqrt{5}-7}}{4}\cdot \bigl(\cos(\pi/3); -\sin(\pi/3)\bigr).$$

Возьмём компакт $A_1=\{a_1,b_1\}$ и покажем, что выполняется $$\Bigl(\partial B_{d_1}(a_1)\cup \partial B_{d_1}(b_1)\Bigr)\cap \HP_d(D) \subset \bigcap\limits_{i=1}^3 U_{d_i}\bigl(\Conv(A_i)\bigr) = U_d^{\Conv}.$$ Имеем $K_\lambda=\HP_d(D)\subset \partial B_{d_1}(a_1)\cup \partial B_{d_1}(b_1),$ см. рис.~\ref{Conv :image}. Поэтому $$\Bigl(\partial B_{d_1}(a_1)\cup \partial B_{d_1}(b_1)\Bigr)\cap \HP_d(D) = \HP_d(D).$$ Следовательно, нам нужно доказать $\HP_d(D)\subset U_d^{\Conv}.$ Покажем сначала, что $\HP_d(D)\subset U_{d_1}\bigl(\Conv(A_1)\bigr).$ 

В силу~(\ref{eval}) абсцисса точки $a_1$ меньше абсциссы точки $p.$ При этом $p\in B_{d_1}(a_1).$ Значит, расстояние от $p$ до отрезка $[a_1, b_1]$ меньше $d_1.$ Аналогично, абсцисса $b_1$ больше абсциссы $q$ ввиду~(\ref{eval}), и $q\in B_{d_1}(b_1).$ Поэтому расстояние от $q$ до $[a_1, b_1]$ также меньше $d_1.$ Следовательно, $$\HP_d(D) = \{p, q\} \subset U_{d_1}\bigl(\Conv(A_1)\bigr).$$

Далее покажем $\HP_d(D) \subset U_{d_2}\bigl(\Conv(A_2)\bigr).$ Согласно доказанному в работе~\cite{M} имеем $p\in U_{d_2}(a_2),$ поэтому нам надо только показать, что $q \in U_{d_2}\bigl([a_2, b_2]\bigr).$ Для удобства обозначим величину $\frac{\sqrt{5}-\sqrt{4\sqrt{5}-7}}{4}$ через $c.$ Далее замечаем, что косинус угла между отрезками $[a_2, b_2]$ и $[a_2, q]$ равен
$$\alpha:= \frac{(1+c)\cdot \cos(\pi/3) \cdot\bigl(-1 + \cos(\pi/3)\bigr) + (1 - c)\cdot \sin^2(\pi/3)}{(1+c)^2\cdot \cos^2(\pi/3) + (1-c)^2\cdot \sin^2(\pi/3)}.$$
Покажем, что $\alpha > 0.$ Знаменатель положительный, поэтому достаточно рассмотреть только числитель. Имеем
\begin{multline}
(1+c)\cdot \cos(\pi/3) \cdot\bigl(-1 + \cos(\pi/3)\bigr) + (1 - c)\cdot \sin^2(\pi/3) = \\ 
= \cos^2(\pi/3) - \cos(\pi/3) + c\cdot\cos^2(\pi/3) - c\cdot\cos(\pi/3) + \sin^2(\pi/3) - c\cdot\sin^2(\pi/3) = \\
= 1 + c\cdot \cos(2\pi/3) - c\cdot\cos(\pi/3) - \cos(\pi/3) = 1 - 2c\cdot\cos(\pi/3) - \cos(\pi/3) = \\ 
= 1 - (2c + 1)\cdot \cos(\pi/3) = 1 - c - 0.5 = 0.5 - c.
\end{multline}
В силу~(\ref{eval}) получаем $0.5 - c > 0.$ Следовательно, $\alpha > 0.$ Это означает, что угол между отрезками $[a_2, b_2]$ и $[a_2, q]$ меньше $\pi/2.$ При этом $p\in B_{d_2}(a_2).$ Отсюда расстояние от $q$ до отрезка $[a_2, b_2]$ меньше $d_2.$ Значит, $$\HP_d(D) = \{p, q\} \subset U_{d_2}\bigl(\Conv(A_2)\bigr).$$

В силу зеркальной симметрии аналогично доказывается, что $\{p, q\}\subset U_{d_3}\bigl(\Conv(A_3)\bigr).$ Таким образом, мы показали $$\HP_d(D)\subset U_{d_i}\bigl(\Conv(A_i)\bigr)$$ для всех $i,$ см. рис.~\ref{K_d_Conv}. 
\begin{figure}[h]
\center{\includegraphics[scale=0.65]
{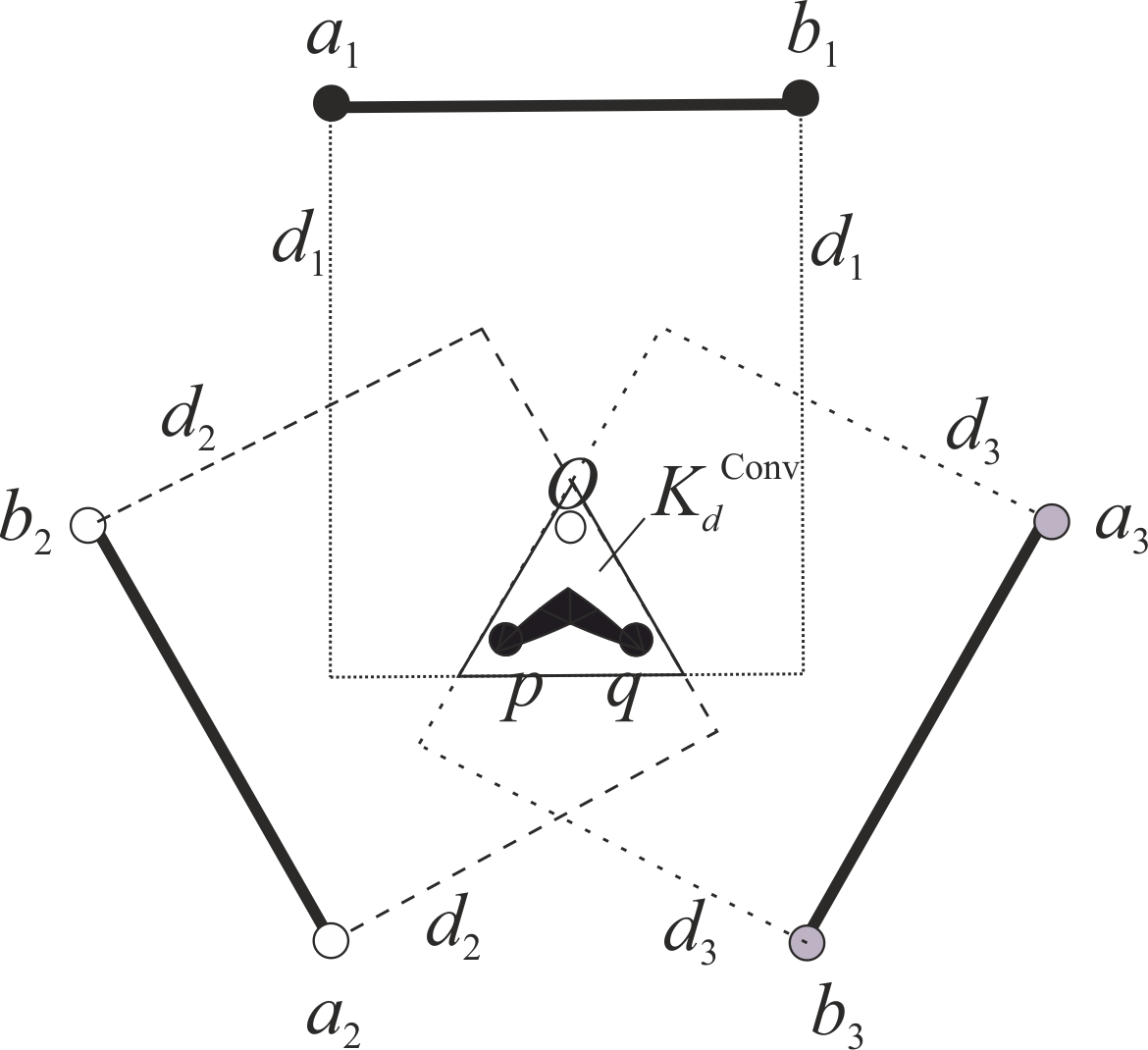}}
\caption{$\HP_d(D)\subset U_d^{\Conv} = \Int K_d^{\Conv}$.}
\label{K_d_Conv} 
\end{figure}
Следовательно, мы нашли компакт $A_1$ такой, что $$\emptyset\neq \Bigl(\partial B_{d_1}(a_1)\cup \partial B_{d_1}(b_1)\Bigr)\cap \HP_d(D) = \HP_d(D)\subset \bigcap\limits_{i=1}^3 U_{d_i}\bigl(\Conv(A_i)\bigr) = U_d^{\Conv}.$$ Также отметим, что $d_1, d_2, d_3$ положительны и евклидова норма $\mathbb{R}^2$ является строго выпуклой. Значит, выполнены все условия теоремы~\ref{DStay}. Отсюда данная граница неустойчива.

\label{end}

\end{document}